\newtheorem{theorem}{\color{black}\indent Theorem}[section]
\newtheorem{lemma}{\color{black}\indent Lemma}[section]
\newtheorem{remark}{\color{black}\indent Remark}
\newtheorem{definition}{\color{black}\indent Definition}[section]
\newtheorem{cor}{\color{black}\indent Corollary}[section]
\newtheorem{pro}{\color{black}\indent Property}[section]
\newtheorem{example}{\color{black}\indent Example}
\begin{document}

\title{Quasiperiodic Poincar\'{e} Persistence at High Degeneracy\footnote{The first author was supported by China Postdoctoral Science Foundation (2021M701396, 2022T150262), NSFC(12201243). The second author was supported by National Basic Research Program of China (2013CB834100), NSFC (11571065,11171132), JilinDRC (2017c028-1). The third author was supported by NSFC (11201173,12371191), Science and Technology Developing Plan of Jilin Province (20180101220JC). }}
\author{ Weichao Qian$^a$\thanks{E-mail address: qian\_wc@163.com}, ~ Yong Li$^{a,b}$\thanks{E-mail address: liyongmath@163.com}~\footnote{Corresponding author}, ~ Xue Yang$^{a,b}$\thanks{E-mail address: xueyang@jlu.edu.cn}
\\
{$^a$College of Mathematics, Jilin University, P. R. China}
\\
{$^b$School of Mathematics and Statistics \&}
\\
{Center for Mathematics and Interdisciplinary Sciences, }
\\
{Northeast Normal University, P. R. China}
}
\date{}

\maketitle

\begin{abstract}
For Hamiltonian systems with high-order degenerate perturbation, we study the persistence of resonant invariant tori, where the resonant tori might be elliptic, hyperbolic or mixed types. As a consequence, we prove a quasiperiodic Poincar\'{e} theorem at high degeneracy. This answers a long standing conjecture on the persistence of resonant invariant tori in general situations.

{\bf Keywords} {Hamiltonian systems; high-order degenerate perturbation; KAM theory; resonant invariant tori; quasiperiodic Poincar\'{e} theorem.}
\end{abstract}

\section{Introduction}\label{introduction}

This paper concerns the persistence of resonant invariant tori for the following Hamiltonian system
\begin{eqnarray}\label{005}
H(\theta, I)=H_0 (I) + \varepsilon P(\theta,I,\varepsilon),
\end{eqnarray}
where  $\theta \in T^d = R^d/ Z^d$, $I$ $\in$ $G$ ($G$ is a bounded closed region in $R^d$), $H_0 (I)$ and $ P(\theta, I, \varepsilon) =  P_0(I, \theta,0)+ \sum\limits_{1\leq \iota \in Z_+}\frac{\varepsilon^{\iota }}{\iota!} P_\iota(I, \theta,0)$ are real analytic functions
on a complex neighborhood of the bounded closed region $T^d \times G$ and $\varepsilon > 0$ is a small parameter. Here the so-called resonant invariant tori mean the frequency $\omega(I) = \frac{\partial H_0}{\partial I}$ is resonant for some $I$, i.e., there exists at least one $k \in {Z^d \setminus \{0\}}$ such that $\langle k, \omega \rangle = 0$.

 The celebrated KAM theory due to Kolmogorov, Arnold and Moser asserts that, if an integrable system, $H_0(I)$ in (\ref{005}), is nondegenerate, i.e. $\det \partial_{I}^2 H_0 \neq 0$, then, for the perturbed system $H (\theta,I) = H_0(I)+ \varepsilon P(\theta,I,\varepsilon),$  most of nonresonant invariant tori still survive (\cite{Arnold,Kolmogorov,Moser}). For some recent developments and applications related to KAM theory, refer to \cite{Guardia,han,Kaloshin1,Meyer,Palacian,Palacian1,Qian}. However, in the presence of resonance, the persistence problem becomes very complicated. Let us do a brief recall. The periodic case can go back to
 the work of Poincar\'{e} in nineteenth century, which does not involve the small divisor problem(\cite{Poincare}). There has been a long standing conjecture about resonant tori under a convexity assumption on $H_0$
 (\cite{Broer,Cong,Livia,Gentile,Kappeler}), as written by Kappeler and P\"{o}schel in \cite{Kappeler}:
 \begin{itemize}
   \item[] \emph{For $m = 1 $ in particular, such a torus is foliated into identical closed orbits. Bernstein $\&$ Katok $(\emph{\cite{Bernstein}})$ showed that in a convex system at least $d$ of them survive any sufficiently small perturbation. $\cdots$ For the intermediate cases with $1<m<d-1$, only partial results are known $\cdots$. The long standing conjecture is that at least $d- m +1$, and generically $2^{d-m}$, invariant $m-$tori always survive in a nondegenerate system $\cdots$. That is, their number should be equal to the number of critical points of smooth functions on the torus $T^{d-m}$.}
 \end{itemize}
 In above description, $m$ and $d$ are dimensions of the lower-dimensional invariant tori and the degree of freedom, respectively.

 The first breakthrough of the conjecture mentioned above was due to Tresch\"{e}v (\cite{Treshchev}) for the persistence of hyperbolic resonant tori in 1989, 35 years after the establishment of KAM theory, and such tori are called Tresch\"{e}v's tori today. For the persistence of general resonant tori, we refer readers to \cite{Cong,Li,Xu2,xu3}. In fact, for Hamiltonian system $(\ref{005})$, when $P_0(I, \theta,0)$ in $\varepsilon P(\theta, I, \varepsilon) = \varepsilon P_0(I, \theta,0)+ \sum\limits_{1\leq \iota \in Z_+}\varepsilon^{\iota +1} P_\iota(I, \theta,0)$ is nondegenerate (we will explain what `non-degenerate` means later), the proof of the conjecture mentioned above has been completed, see \cite{Cheng1,Cong,Li,Treshchev}. However,
 \begin{itemize}
   \item[] \emph{What happens to the conjecture if $P_0(I, \theta, 0)$ is degenerate?}
 \end{itemize}
In the present paper we will touch this essential problem.

In order to state our main result, first, let us introduce some notations. We say that a frequency vector $\omega = {\partial_{I} H_0} $ is nonresonant for some $I$, if $\langle k, \omega \rangle \neq 0$ for any $k \in {Z^d \setminus \{0\}}$. Furthermore, if there is a subgroup $g$ of $Z^d$ such that $\langle k , \omega\rangle = 0$ for all $k \in g$ and $\langle k , \omega \rangle \neq 0$ for all $k \in {Z^d / g}$, then $\omega$ is called \textbf{multiplicity $m_0$ resonant frequency} ($g-$resonant frequency), where $g$ is generated by independent $d-$dimensional integer vectors $\tau _1, \ldots , \tau _ {m_0}$. For a given subgroup $g$, the manifold
\begin{eqnarray*}
\widetilde{\Lambda}(g, G)=\{I \in G : \langle k , \omega (I) \rangle =0, k\in g \}
\end{eqnarray*}
is called $g-$resonant surface. By group theory, there are integer vectors ${\tau _1'}, \cdots, {\tau _m'}$ $\in$ $Z ^ d$, such that $Z^d$ is generated by ${\tau _1}, \cdots, {\tau _{m_0}},{\tau _1'}, \cdots, {\tau _m'}$, and~ $\det K_0 = 1 $, where $K_0 = (K_* , K^{'})$,~ $K_* = (\tau _1', \cdots, \tau _m')$, $K^{'}=({\tau _1}, \cdots, {\tau _{m_0}}) $ are ~$d\times d$ matrix, $d\times m $ matrix, $d\times m_0$ matrix, respectively,  and $K_{*}$ generates the quotient group $Z^d / g$, while $K^{'}$ generates the group $g$ (\cite{Treshchev}). If $\det {K^{'}}^T \partial_{I}^2 {H_0}{K^{'}} \neq 0$ and $\det \partial_{I}^2 {H_0} \neq 0$ for $I \in \widetilde{\Lambda}(g,G)$, Hamiltonian system (\ref{005}) is called \textbf{$g-$nondegenerate}.

The motion equation of the unperturbed Hamiltonian system $H_0(I)$ in (\ref{005}) is
\begin{eqnarray*}
\left\{
  \begin{array}{ll}
    \dot{\theta} = \omega(I),  \\
    \dot{I} = 0.
  \end{array}
\right.
\end{eqnarray*}
Denote $p=(y, v),$ $q= (x,u)$, where $y = (p_1, \cdots ,p_m)^T,$ $v = (p_{m+1}, \cdots ,p_d)^T,$ $x = (q_1, \cdots ,q_m)^T,$ $u = (q_{m+1}, \cdots ,q_d)^T.$ When $\omega(I)$ is $g-$resonant, under the following sympletic transformation
\begin{eqnarray}\label{ST1}
\phi_g: (p,q) \rightarrow (I, \theta),
\end{eqnarray}
 where $K_0 ^T \theta = q, I - I_0 = K_0 p,$
the equation of motion becomes
\begin{eqnarray*}
\left\{
  \begin{array}{llll}
    \dot{x} = K_*^T \omega(I),  \\
    \dot{u} = 0,\\
\dot{y} = 0,\\
\dot{v} = 0,
  \end{array}
\right.
\end{eqnarray*}
where $K_0$ and $K_*$ are mentioned as above. (We place the verification that $\phi_g$ is sympletic on Appendix \ref{A}.) We call such $(y, v, u)$ the relative critical point.

With transformation $\phi_g$, Hamiltonian system (\ref{005}) could be transformed to
\begin{eqnarray}
H(x,y,u,v) = H\circ \phi_g = \tilde{H}_0(y,v)+\varepsilon \tilde{P}(x,y,u,v,\varepsilon),
\end{eqnarray}
where $$\tilde{P}(x,y,u,v,\varepsilon) = P((K_0 ^T)^{-1} \left(
          \begin{array}{c}
            x \\
            u \\
          \end{array}
        \right), I_0 + K_0 \left(
             \begin{array}{c}
               y \\
               v \\
             \end{array}
           \right),\varepsilon) = \sum\limits_{\iota}\frac{\varepsilon^{\iota }}{\iota!} \tilde{P}_\iota(x,y,u,v,0).$$
(For the normal form in detail, refer to section \ref{074}.) Let $[\tilde{P}](y,u,v,\varepsilon)=\\ \int_{T^m} \tilde{P}(x, y,u,v,\varepsilon) dx = \sum\limits_{\iota}\frac{\varepsilon^{\iota }}{\iota!} [\tilde{P}_\iota](y_0,u_0,v_0,0).$ When $\det \partial_{I}^2 {H_0} \neq 0$ and no eigenvalue of $\partial_u ^2[\tilde{P}_0] {K^{'}}^T \partial_{I}^2 {H_0}{K^{'}}$ is positive or zero, Tresch\"{e}v $(\cite{Treshchev})$ dealt with the persistence of resonant tori. When $\det \partial_u^2 [\tilde{P}_0] \neq0$, for $g-$nondegenerate Hamiltonian system $(\ref{005})$, Cong, K\"{u}pper, Li and You (\cite{Cong}) dealt with the persistence of resonant invariant tori.  Li and Yi$(\cite{Li})$ further removed the $g-$nondegenerate condition. When $\det \partial_u^2 [\tilde{P}_0]=0$, what happens to the persistence of resonant tori becomes very complicated. The conjecture says that the number of the survival resonant tori is at least $m_0+1$ and generically $2^{m_0}$ for nondegenerate systems. We call perturbation $\tilde{P}(x,y,u,v ,\varepsilon)$ \textbf{$\kappa-$order nondegenerate}, if $\det \partial_u^2 [\tilde{P}_\iota]( y_0,u_0,v_0,0) =0$ for $0\leq\iota\leq \kappa-1$ and $\det \partial_u^2 [\tilde{P}_{\kappa}]( y_0,u_0,v_0,0) \neq 0$, where $(y_0, u_0, v_0)$ is the critical point of $[\tilde{P}]$. Obviously, above results only deal with the persistence of resonant tori for Hamiltonian (\ref{005}) with $0$-order nondegenerate perturbation. In the present paper, we prove that $2^{m_0}$ families of invariant torus survive for Hamiltonian system (\ref{005}) with $\kappa-$order nondegenerate perturbation, where $\kappa$ is a given integer.

Now we are in a position to state our main results. We call $P(I, \theta, \varepsilon)$ in $(\ref{005})$ $\kappa-$order nondegenerate, if there is a symplectic transformation $\phi_g$ as in $(\ref{ST1})$ such that $\tilde{P}(x,y,u,v,\varepsilon) = P \circ \phi_g$ is $\kappa-$oder nondegenerate. First, we show results about a simple case, a $g-$nondegenerate Hamiltonian system with $\kappa$-order nondegenerate perturbation.

\begin{theorem} \label{cor2}
Let $g-$nondegenerate Hamiltonian system (\ref{005}) with $\kappa-$order nondegenerate perturbation $P(I, \theta,\varepsilon)$ be real analytic on the complex neighborhood of $T^d\times G$. We have:
\begin{itemize}
  \item [$\bf{i)}$]There exists a $\varepsilon_0 >0 $ and a family of Cantor sets $\widetilde{\Lambda}_\varepsilon(g,G) \subset \widetilde{\Lambda}(g,G)$, $0<\varepsilon < \varepsilon_0$, such that for each $I \in \widetilde{\Lambda}_\varepsilon(g,G)$, system (\ref{005}) admits $2^{m_0}$ families of invariant torus, possessing  hyperbolic, elliptic or mixed types, associated to nondegenerate relative equilibria. All such perturbed tori corresponding to a same $I \in \widetilde{\Lambda}_\varepsilon(g,G) $ are symplectically conjugated to the standard quasiperiodic $m-$tori $T^m$ with the Diophantine frequency vector $\omega_* = K_*^T \partial_{I} H_0(I)$. Moreover, the relative Lebesgue measure $|\widetilde{\Lambda}(g,G) \setminus \widetilde{\Lambda}_{\varepsilon}(g,G)|$ tends to $0$ as $\varepsilon \rightarrow 0$.
  \item [$\bf{ii)}$] Consider $g-$nondegenerate Hamiltonian system $(\ref{005})$ with $\kappa$-order nondegenerate perturbation $P(I,\theta, \varepsilon)$ on $\sum = \{I: H_0(I) = c\}$.  Assume
  \begin{enumerate}
   \item [\bf{(S1).}]$
rank \left(
       \begin{array}{cc}
         K_0^T \partial_{I}^2 H_0(I) K_0 & \bar{\omega}_* \\
         \bar{\omega}_*^T & 0 \\
       \end{array}
     \right) = m+ m_0+1
$, where $\bar{\omega}_* = \left(
                                                                         \begin{array}{c}
                                                                           \omega_* \\
                                                                           0 \\
                                                                         \end{array}
                                                                       \right)
\in R^{m+ m_0}$, $\omega_* = K_*^T \partial_{I} H_0(I).$
 \end{enumerate} Then there exists a $\varepsilon_0 >0 $ and a family of Cantor sets $\tilde{\Xi}_\varepsilon \subset \tilde{\Xi}=\{I \in G: H_0(I) = c, \langle k, \omega\rangle = 0,  k\in g\}$, $0<\varepsilon < \varepsilon_0$, such that for each $I \in \tilde{\Xi}_\varepsilon$, on a given energy-level manifold system (\ref{005}) admits $2^{m_0}$ families of invariant torus, possessing  hyperbolic, elliptic or mixed types, associated to nondegenerate relative equilibria. The frequencies $\breve{\omega}$ of the persistent tori satisfy that $\breve{\omega} =t \omega_*$, where $t\rightarrow 1$ as $\varepsilon\rightarrow 0$. Moreover, the relative Lebesgue measure $|\tilde{\Xi} \setminus \tilde{\Xi}_\varepsilon |$ tends to $0$ as $\varepsilon \rightarrow 0$.
\end{itemize}
\end{theorem}

\begin{remark}
Here a map defined on a Cantor set is said to be smooth in Whitney's sense if it has a smooth Whitney extension. For details, see $\emph{\cite{Poschel}}$.
\end{remark}

Since $[\tilde{P}](y,u,v, \varepsilon)$ is $T^{m_0}-$periodic in $u$, there are at least $m_0 + 1$ critical points for $[\tilde{P}]( y,u,v,\varepsilon)$ for given $y_0$, $v_0$ and $\varepsilon_0$(\cite{Milnor}). Note that $[\tilde{P}]$ is $\kappa$-order nondegenerate, $\det \partial_u^2 [\tilde{P}_{\kappa}]( y_0,u_0,v_0,0) \neq 0$, where $(y_0,u_0, v_0)$ is relative critical point, which means that such perturbations are generic according to Morse theory $(\cite{Hirsch,Milnor})$. Therefore, Theorem \ref{cor2} shows the persistence of resonant tori for a $g-$nondegenerate Hamiltonian system $(\ref{005})$ with a generic perturbation in the sense of the $\kappa$-order nondegeneracy, where $\kappa$ is a given positive integer. Hence this positively verifies the conjecture mentioned above in a general situation for $g$-nondegenerate Hamiltonian system (\ref{005}).

The $\kappa-$order nondegenerate perturbation in the present paper is different from the case given by Tresch\"{e}v $(\cite{Treshchev})$, where the corresponding Hamiltonian is the following:
\begin{eqnarray}\label{Eq13}
H(x,y,\varepsilon) = H_0(y)+ \varepsilon H_1(y)+ \cdots + \varepsilon^k H_k(y) + \varepsilon^{k+1} H_{k+1}(x,y,\varepsilon).
\end{eqnarray}
If there is some condition on the $0-$order Taylor coefficient of the average of $H_{k+1}$ in $(\ref{Eq13})$, he obtained the persistence of resonant tori (hyperbolic), and for some recent developments of such system, refer to {\cite{han,Qian2,Xu2,xu3}}. Actually, for the nearly integrable Hamiltonian system with a resonant integrable part and a $\kappa$-order nondegenerate perturbation, with finite KAM steps Hamiltonian system (\ref{005}) can be reduced to the following system:
\begin{eqnarray*}
H(x,y,u,v,\varepsilon) &=& H_0(y,v)+ \varepsilon H_1(y,u,v)+ \cdots + \varepsilon^{\kappa} H_{\kappa}(y,u,v) \\
&~&+ \varepsilon^{\kappa+1} H_{\kappa+1}(x,y,v,u,\varepsilon),
\end{eqnarray*}
where $y$ and $v$ come from $I$ of the original system $(\ref{005})$, $x$ and $u$ come from $\theta$ of the original system $(\ref{005})$. For detail definitions and the process of reduction, refer to Section \emph{\ref{074}}. Moreover, $\kappa-$order nondegenerate perturbation ensures the relative equilibria of $\varepsilon H_1(y,u,v)+ \cdots + \varepsilon^\kappa H_{\kappa+1}(y,u,v)$ is nondegenerate, which means there are $2^{m_0}$ relative critical points according to Morse theory.

Next, we will give a more general case, in which we remove the $g-$nondegeneracy and study the partial preservation of frequency and partial preservation of  ratios of frequencies. Let us do some assumptions for Hamiltonian system (\ref{005}) first:
\begin{enumerate}
   \item [\bf{(S2)}.] For $H_0(I)$ in (\ref{005}), $\omega_*(I) = K_*^T \partial_{I} H_0(I)$ satisfies R\"{u}ssmann non-degenerate condition, i.e., for some $N>0$, $rank \{\partial_I^\alpha \omega_*(I), |\alpha| < N  \} = m$ for every $I\in \widetilde{\Lambda}(g, G)$, where $\partial_{I}^\alpha \omega_*(I) = \frac{\partial^{|\alpha|}\omega_*}{\partial I_1^{\alpha_1}\cdots \partial I_d^{\alpha_d}}$, $\alpha = (\alpha_1,\cdots,\alpha_d)\in Z_+^d, |\alpha|= |\alpha_1|+\cdots+|\alpha_d|\leq N$$;$
   \item [\bf{(S3)}.]$ rank ~( K_0^T \partial_{I}^2  H_0  K_0 ) = n+ m_0$, $0\leq n\leq m$, and $rank((K')^T \partial_{I}^2 H_0 K_*, \\ (K')^T \partial_{I}^2 H_0 K') = m_0$, where $H_0(I)$ comes from (\ref{005}), $K_0$ and $K_*$ are defined as above$;$
   \item [\bf{(S4)}.]  $rank \left(
       \begin{array}{cc}
         K_0^T \partial_{I}^2 H K_0 & \bar{\omega}_* \\
         \bar{\omega}_*^T & 0 \\
       \end{array}
     \right) = n + m_0+1$, $0\leq n\leq m$, where $\bar{\omega}_* =\left(
                                                                                                   \begin{array}{c}
                                                                                                     \omega_* \\
                                                                                                     0 \\
                                                                                                   \end{array}
                                                                                                 \right)
      \in R^{m+ m_0}$, $\omega_* = K_*^T \partial_{I} H_0(I)\in R^{m}$, $H_0(I)$ comes from $(\ref{005})$, $K_0$ and $K_*$ are defined as above.
 \end{enumerate}

Now, let us state these more general results.
\begin{theorem}\label{dingli11}
Let Hamiltonian system $(\ref{005})$  with a $\kappa$-order nondegenerate perturbation $P(I, \theta, \varepsilon)$ be real analytic
on the complex neighborhood of $T^d \times G$. We have:
\begin{itemize}
  \item [$\bf{i)}$] Assume $\bf{(S2)}$ and $\bf{(S3)}$ hold. Then there exists a $\varepsilon_0 >0 $ and a family of Cantor sets $\widetilde{\Lambda}_\varepsilon(g,G) \subset \widetilde{\Lambda}(g,G)$, $0<\varepsilon < \varepsilon_0$, such that for each $I \in \widetilde{\Lambda}_\varepsilon(g,G)$, system (\ref{005}) admits at least $2^{m_0}$ families of invariant torus, possessing  hyperbolic, elliptic or mixed types, associated to nondegenerate relative equilibria. And $n$ coordinates of the frequency $\breve{\omega}$ on the persistent tori coincide with $n$ coordinates of  $\omega_*$. Moreover, the relative Lebesgue measure $|\widetilde{\Lambda}(g,G) \setminus \widetilde{\Lambda}_{\varepsilon}(g,G)|$ tends to $0$ as $\varepsilon \rightarrow 0$.
  \item [$\bf{ii)}$]Consider Hamiltonian system $(\ref{005})$ with a $\kappa$-order nondegenerate perturbation $P(I, \theta, \varepsilon)$ on $\sum = \{I: H_0(I) = c\}$. Assume $\bf{(S2)}$, $\bf{(S3)}$ and $\bf{(S4)}$ hold on $\sum$. Let $\tilde{\Xi}=\{I\in G: H_0(I) = c, \langle k, \omega\rangle = 0,  k\in g\}.$ Then there exists a $\varepsilon_0 >0 $ and a family of Cantor sets $\tilde{\Xi}_\varepsilon \subset \tilde{\Xi}$, $0<\varepsilon < \varepsilon_0$, such that for each $I \in \tilde{\Xi}_\varepsilon$, on a given energy-level manifold, system (\ref{005}) admits at least $2^{m_0}$ families of invariant torus, possessing  hyperbolic, elliptic or mixed types, associated to nondegenerate relative equilibria. And $n$ coordinates of the frequency $\breve{\omega}$ on the persistent tori coincide with $n$ coordinates of $t \omega_*$, where $t \rightarrow 1$ as $\varepsilon \rightarrow 0$. Moreover, the relative Lebesgue measure $|\tilde{\Xi} \setminus \tilde{\Xi}_\varepsilon |$ tends to $0$ as $\varepsilon \rightarrow 0$.
\end{itemize}

\end{theorem}

\begin{remark}\label{Eq11}
Consider the following Hamiltonian system
\begin{eqnarray}
\nonumber H(\tilde{x}, \tilde{y}) &=& \langle \tilde{\omega}, \tilde{y} \rangle+ \frac{\varepsilon}{2} \langle \tilde{y}, M
\tilde{y} \rangle + \varepsilon^3 \cos(2x_1- x_2)\\
\label{EQ40}&& + \varepsilon^2 \cos(2x_1- x_2)\sin (- x_1) e^{-y_1 - 2y_2},
\end{eqnarray}
where $\tilde{x}= (x_1, x_2)^T$, $\tilde{y}= (y_1, y_2)^T$, $\tilde{\omega} = (\omega_1, 2\omega_1)^T$,  $x_1, x_2 \in T^1$, $y_1$, $y_2$ $ \in R^1$, $\omega_1 \in R\setminus \{0\}$ and ${M = \left(
                                                  \begin{array}{cc}
                                                    \frac{1}{4} & 0 \\
                                                    0 & \frac{1}{4} \\
                                                  \end{array}
                                                \right)}$. Let $P = \varepsilon^3 \cos(2x_1- x_2)+ \varepsilon^2 \cos(2x_1- x_2)\sin (- x_1) e^{-y_1 - 2y_2}.$ Denote $\tilde{\phi}_g:$ $\left(
                                          \begin{array}{c}
                                            y_1 \\
                                            y_2 \\
                                          \end{array}
                                        \right) = \left(
                                                    \begin{array}{cc}
                                                      -1 & 2 \\
                                                      0 & -1 \\
                                                    \end{array}
                                                  \right)\left(
                                                           \begin{array}{c}
                                                             y \\
                                                             v \\
                                                           \end{array}
                                                         \right)
$, $\left(
      \begin{array}{c}
        x_1 \\
        x_2 \\
      \end{array}
    \right) = \\ \left(
                \begin{array}{cc}
                  -1 & 0 \\
                  -2 & -1 \\
                \end{array}
              \right) \left(
                        \begin{array}{c}
                          x \\
                          u \\
                        \end{array}
                      \right).
$ Obviously, previous works do not apply to this system, since $\tilde{P}(x, u, y, v) = P \circ \tilde{\phi}_g$ is $2-$order nondegenerate perturbation. Actually, under transformation $\tilde{\phi}_g$, $(\ref{EQ40})$ is changed to
\begin{eqnarray*}
H(x,y,u,v) &=& - \omega_1 y + \frac{\varepsilon}{2}\langle \left(
                                                             \begin{array}{c}
                                                               y \\
                                                               v \\
                                                             \end{array}
                                                           \right), \left(
                                                                      \begin{array}{cc}
                                                                        \frac{1}{4} & -\frac{1}{2} \\
                                                                        -\frac{1}{2} & \frac{5}{4} \\
                                                                      \end{array}
                                                                    \right)\left(
                                                             \begin{array}{c}
                                                               y \\
                                                               v \\
                                                             \end{array}
                                                           \right)
\rangle\\
&~& + \varepsilon^3 \cos u + \varepsilon^2~ \cos u ~ \sin x ~e^y,
\end{eqnarray*}
which implies that our Theorem \ref{dingli11} works. Moreover, with our results there are 2 families of resonant torus for system $(\ref{EQ40})$.  For details, refer to Section $\ref{example}.$

\end{remark}

\begin{remark}
Condition $\bf{(S2)}$ ensures the existence of the resonant tori for perturbed system.
\end{remark}

\begin{remark} If $n=m$ and $(K')^T \partial_{I}^2 H_0 K'$ is nondegenerate, condition $\bf{(S3)}$ is $g-$nondegenerate condition mentioned in \cite{Cong,Treshchev}, which ensures the preservation of frequency in the process of KAM iteration. When $n=m$, condition $\bf{(S3)}$ is the condition mentioned in \cite{Li}. Obviously, condition $\bf{(S3)}$ is weaker than all of them if $n<m$. Combining conditions $\bf{(S2)}$ and $\bf{(S3)}$, in the process of KAM iteration, we could show the partial preservation of frequencies, which is determined by $(K_{*}^T \partial_{I}^2 {H_0}K_{*}, K_*^T \partial_{I}^2 H K')$. The details will be shown in Section $\ref{normal form}$.
\end{remark}

\begin{remark}
Under the isoenergetic nondegenerate condition: $$\det\left(
                                                    \begin{array}{cc}
                                                      \partial_{I}^2 H_0 & \partial_{I} H_0 \\
                                                      (\partial_{I} H_0)^T & 0 \\
                                                    \end{array}
                                                  \right)\neq 0,
$$
for Hamiltonian system $(\ref{005})$, Arnold $($\emph{\cite{Arnold1}}$)$ proved that on each energy-level manifold, the invariant tori form majority, which means that the Lebesgue measure of the complement of their union is small and depends on the perturbation. Conditions $\bf{(S2)}$, $\bf{(S3)}$ and $\bf{(S4)}$ are isoenergetic nondegenerate conditions for resonant tori, where $\bf{(S3)}$ and $\bf{(S4)}$ are closely related to the preservation of ratios of frequencies on a given energy-level manifold. As is well-known, the Kolmogorov nondegenerate condition and the classical isoenergetic nondegenerate condition are independent $($\emph{\cite{Sevryuk}}$)$. Our conditions do not violate this fact and reveal a further fact on partial preservation of ratios of frequencies: $\bf{(S3)}$ is also essential for the preservation of energy.
\end{remark}

\begin{remark}
When $n =m$ in condition $\bf{(S3)}$, $\bf{(S2)}$ holds automatically. If $n=m$ in condition $\bf{(S3)}$ and perturbation $P(I, \theta, \varepsilon)$ in Hamiltonian system $(\ref{005})$ is $0$-order nondegenerate, part $\bf{i)}$ of \textbf{Theorem } \emph{\ref{dingli11}} is the result of \emph{\cite{Li}}.
\end{remark}

Finally, we give the following corollary  according to Theorem \ref{dingli11}.
\begin{cor}\label{cor1}
Let Hamiltonian system $(\ref{005})$  with a $\kappa$-order nondegenerate perturbation $P(I, \theta, \varepsilon)$ be real analytic
on the complex neighborhood of $T^d \times G$.  Assume $\bf{(S2)}$, $\bf{(S4)}$ and
 \begin{enumerate}
   \item[{\bf (S5).}] $K_0^T \partial_{I}^2 H_0 K_0$ has a $(m_0+n)\times (m_0+n)$ nonsingular minor, $0\leq n \leq m$, and $\det {K^{'}}^T \partial_{I}^2 {H_0}{K^{'}} \neq 0$.
 \end{enumerate}
Then the conclusions of Theorem \ref{dingli11} also hold.

\end{cor}

\begin{remark}
$\bf{(S5)}$ is equivalent to the following $\bf{(S5')}$$:$
\begin{enumerate}
   \item[$\bf{ (S5').}$] $rank (K_*^T \partial_{I}^2 H K' {K'}^T \partial_{I}^2 H K' {K'}^T\partial_{I}^2 H K_* + K_*^T \partial_{I}^2 HK_* )= n,$ $n < m$, and $\det {K^{'}}^T \partial_{I}^2 {H_0}{K^{'}} \neq 0$ for $I \in \widetilde{\Lambda}(g,G)$,
 \end{enumerate}
 which follows from the following fact$:$
 \begin{eqnarray*}
 \left(
   \begin{array}{cc}
     I_r & 0 \\
     -DB^{-1} & I_{m-r} \\
   \end{array}
 \right) \left(
           \begin{array}{cc}
             B & C \\
             D & E \\
           \end{array}
         \right)
 = \left(
                     \begin{array}{cc}
                       B & C \\
                       0 & -DB^{-1}C + E \\
                     \end{array}
                   \right),
  \end{eqnarray*}
  where $B$ is nonsingular.
\end{remark}

The classical Birkhoff normal form theory provides a formal integrability to harmonic oscillators with perturbation. But it does not work for the persistence of resonant tori studied in present paper, due to the nonlinearity of the unperturbed system and the degeneracy of $[\tilde{P}_0] (y,u,v,0)$. To overcome these difficulties, besides using Tresch\"{e}v's reduction, we propose a quasilinear normal form program by introducing quasilinear KAM iteration, which is used for searching high nondegeneracy and keeping critical points that are related to certain quasiperiodicity of the perturbation. In particular, our KAM iteration is more suitable for problems with worse normal forms. Hence, this approach provides a thorough way to study the persistence of resonant invariant tori under high degenerate perturbations.

The paper is organized as follows. In Section \ref{normal form}, we give an abstract Hamiltonian system and show the persistence of invariant tori. In this section, we introduce modificatory KAM step, which is interesting in itself.  With the results of the abstract Hamiltonian system we finish the proof of Theorem \ref{dingli11} in Section \ref{074}. Finally, in Section $\ref{example}$, we also give two examples to show the complexity resulting from the high degeneracy of the perturbation.

\section{Abstract Hamiltonian systems}\label{normal form}\setcounter{equation}{0}
Throughout the paper, unless specified explanation, we shall use the same symbol $|\cdot|$ to denote an equivalent (finite dimensional) vector norm and its induced matrix norm, absolute value of functions, and measure of sets, etc., and use $|\cdot|_D$ to denote the supremum norm of functions on a domain $D$. Also, for any two complex column vectors $\xi$, $\zeta$ of the same dimension, $\langle \xi, \zeta \rangle$ always stands for $\xi^T \zeta$, i.e., the transpose of $\xi$ times $\zeta$. For the sake of brevity, we shall not specify smoothness orders for functions having obvious orders of smoothness indicated by their derivatives taking. All constants below are positive and independent of the iteration process. Moreover, all Hamiltonian functions in the sequel are associated to the standard symplectic structure.

Let $z=(u, v) \in R ^{2m_0}$. To prove Theorem \ref{dingli11}, consider the following real analytic Hamiltonian system with more general normal form
\begin{eqnarray}\label{model33}
 H(x,y,z,\lambda,\varepsilon) &=& N(y,z, \lambda, \varepsilon) +  \varepsilon^2 P(x,y,z, \lambda, \varepsilon),\\
\nonumber N (y,z,\lambda, \varepsilon)&=& \langle \omega(\lambda), y\rangle + \frac{\varepsilon}{2}\langle \left(
                                                                     \begin{array}{c}
                                                                       y \\
                                                                       z \\
                                                                     \end{array}
                                                                   \right)
, M(\lambda) \left(
               \begin{array}{c}
                 y \\
                 z \\
               \end{array}
             \right)
\rangle + \varepsilon h (y,z,\lambda,\varepsilon),
\end{eqnarray}
defined on
\begin{center}
$D(r,s)=\{(x,y,z):|Im~x|<r,~ |y|<s, ~|z|<s\}$,
\end{center}
where $x \in T^m$, $y \in R^m$, $\lambda \in \Lambda$, $M$, a symmetric matrix, depends smoothly on $\lambda$, $h = O(|\left(
    \begin{array}{c}
      y \\
      z \\
    \end{array}
  \right) |^3
)$ is smooth. Here, $\Lambda$ is a bounded closed region in $R^{m}$. Thorough the paper, all $\lambda-$dependence are of class $C^{l_0}$ for some integer $l_0 \geq d$. Rewrite
\begin{eqnarray*}
M = \left(
      \begin{array}{cc}
        M_{11} & M_{12} \\
        M_{21} & M_{22} \\
      \end{array}
    \right),
\end{eqnarray*}
where $M_{11},$ $M_{12}$, $M_{21},$ $M_{22}$ are $m\times m,$ $m\times 2m_0$, $2m_0\times m,$ $2m_0\times 2 m_0$ matrices, respectively.

\subsection{A General Theorem}

To show the persistence of invariant tori for Hamiltonian (\ref{model33}), assume:

\begin{itemize}
\item[{\bf (A1)}]$rank~ \{\frac{\partial^\alpha \omega}{\partial \lambda^ \alpha}: 0\leq|\alpha| \leq m -1 \}  = m $ for all $\lambda \in {\Lambda}$.
\item[{\bf (A2)}]For given $n$, $0\leq n \leq m,$ $rank (M) = n+ 2m_0$ and $rank (M_{21}, M_{22}) = 2m_0$ for all $\lambda\in \Lambda,$ where $M= (m_{ij})_{(m + 2m_0)\times (m+ 2m_0)}.$
\item[{\bf (A3)}]For given $n$, $0\leq n \leq m,$
\begin{center}
   $ rank \left(
         \begin{array}{cc}
           M(\lambda) & \bar{\omega}_1(\lambda) \\
           {\bar{\omega}_1 }^T(\lambda) & 0 \\
         \end{array}
       \right)
       = n+ 2m_0 +1,$
\end{center}
 where $\bar{\omega}_1 = \left(
                 \begin{array}{c}
                   \omega \\
                   0 \\
                 \end{array}
               \right)\in R^{m+ 2m_0}
$, $\omega\in R^{m}$, $M= (m_{ij})_{(m + 2m_0)\times (m+ 2m_0)}.$
\end{itemize}

\begin{remark}
 We call $\bf{(A2)}$ and $\bf{(A3)}$ sub-isoenergetically nondegenerate conditions for the persistence of lower dimensional invariant tori. Specifically, when $n = m$ and $m_0 = 0$, they are isoenergetically nondegenerate condition introduced by Arnold $(\emph{\cite{Arnold}})$. When $m_0 =0$, they are similar to the isoenergetically nondegenerate condition contained in \emph{\cite{LLL,Sevryuk}}. When $M$ is a block diagonal matrix, refer to $\emph{\cite{Qian1}}$ for a similar condition.
\end{remark}

We state our results for (\ref{model33}) as follows.

\begin{theorem}\label{shengluede}
Let $H(x,y,z,\lambda)$ in $(\ref{model33})$ be real analytic
on the complex neighborhood of $T^d \times G$.
\begin{itemize}
  \item [$\bf{i)}$] Assume $\bf{(A1)}$ and $\bf{(A2)}$ hold on $\Lambda$. Then there exists a $\varepsilon_0 >0 $ and a family of Cantor sets $\Lambda_\varepsilon \subset \Lambda$, $0<\varepsilon < \varepsilon_0$, such that for each $\lambda \in \Lambda_\varepsilon$, system $(\ref{model33})$ admits a family of invariant tori. And $n$ coordinates of the frequency $\breve{\omega}$ on the persistent tori coincide with $n$ coordinates of $\omega$, which are determined by those rows of $(M_{11}, M_{12})$ that are linearly independent. Moreover, the relative Lebesgue measure $|{\Lambda} \setminus {\Lambda}_{\varepsilon}|$ tends to $0$ as $\varepsilon \rightarrow 0$.
  \item [$\bf{ii)}$]Assume $\bf{(A1)}$, $\bf{(A2)}$ and $\bf{(A3)}$ hold on $\tilde{\Xi} = \{\lambda \in \Lambda: N(y,z,\lambda) = c\}$. Then there exists a $\varepsilon_0 >0 $ and a family of Cantor sets $\tilde{\Xi}_\varepsilon \subset \tilde{\Xi}$, $0<\varepsilon < \varepsilon_0$, such that for each $\lambda \in \tilde{\Xi}_\varepsilon$, on a given energy-level manifold, system $(\ref{model33})$ admits a family of invariant tori. And $n$ coordinates of the frequency $\breve{\omega}$ on the persistent tori coincide with $n$ coordinates of $t \omega$, which are determined by those rows of $(M_{11}, M_{12})$ that are linearly independent, where $t \rightarrow1$ as $\varepsilon \rightarrow 0$. Moreover, the relative Lebesgue measure $|\tilde{\Xi} \setminus \tilde{\Xi}_\varepsilon |$ tends to $0$ as $\varepsilon \rightarrow 0$.
\end{itemize}

\end{theorem}

The proof of Theorem \ref{shengluede} will proceed by quasilinear KAM iteration process, which consists of infinitely many KAM steps. Due to the existence of small parameter $\varepsilon$ in term $\frac{\varepsilon}{2} \langle \left(
                                                                                                                          \begin{array}{c}
                                                                                                                            y \\
                                                                                                                            z \\
                                                                                                                          \end{array}
                                                                                                                        \right),
M \left(
    \begin{array}{c}
      y \\
      z \\
    \end{array}
  \right)
\rangle + \varepsilon h(y,z,\lambda,\varepsilon)$, we weaken nondegenerate condition for the persistence of lower dimensional invariant tori, which is interesting in itself. For the case that there is no small parameter in normal direction, refer to \cite{Bourgain,Eliasson,Kuksin,Li1,Mel1,Moser1,Poschel,You}. Next, we show the detail of our KAM steps.

\subsection{KAM step}\label{KAM}

We show first the $0-$th KAM step. For the sake of induction, let
\begin{eqnarray*}
 r_0 = r,~s_0= s,~ \Lambda_0 = \Lambda,~ H_0 = H, ~ N_0 = N, ~P_0 = P,~ M_0 = M, ~h_0 = h,
\end{eqnarray*}
 where $0 <r, s \leq 1$, and denote
\begin{eqnarray*}
 M^* &=&\max\limits_{\substack{|l|\leq l_0, |j|\leq 2, \\ (y, z)\in D(r_0, s_0), \lambda\in \Lambda_0}} |\partial_\lambda^l \partial_{(y,z)}^j h_0(y,z,\lambda)|_{D(r_0, s_0)\times \Lambda_0}.
\end{eqnarray*}
For $j \in Z_+^m$, define
\begin{eqnarray*}
a_j &=& 1- sgn {(|j|-1)} = \left\{
                         \begin{array}{lll}
                           2, & \hbox{$|j| = 0$,} \\
                           1, & \hbox{$|j| = 1$},\\
 0, & \hbox{$|j| \geq 2$.}
                         \end{array}
                       \right.
\end{eqnarray*}
Denote the complex neighborhood of $\Lambda_0$ by $\tilde{\Lambda}_0 = \{\lambda\in \mathds{C}^m, |\lambda - \Lambda_0| \leq \varrho_0\}$ for given constant $\varrho_0$. Let $\varepsilon = \delta$, $\gamma_0 = \varepsilon^{\frac{1 - 3\iota}{3(l_0 +9)}}$, $s_0 = \varepsilon^{\frac{1}{3}}$, $\mu_0 = \varepsilon^\iota$, $\iota \in (0,\frac{1}{3})$ and $\eta_0 = \frac{1}{4} \varrho_0$.  Therefore, by Cauchy's estimate,
\begin{eqnarray}\label{714}
|\partial_\lambda^l P_0|_{D(r_0, s_0) \times {\tilde{\Lambda}}_0} \leq c \frac{\delta\gamma_0^{l_0+9} s_0^{2} \mu_0}{\eta_0^{l_0}}
\end{eqnarray}
for all $l\in Z_+^m$, $|l| \leq l_0,$ where $c>0$ is a constant.

Next we characterize the iteration scheme for Hamiltonian (\ref{model33}) in one KAM step, say, from the $\nu-$th KAM step to the $(\nu + 1)-$th step. Recall $M_\nu = \left(
              \begin{array}{cc}
                M_{11,\nu} & M_{12,\nu} \\
                M_{21,\nu} & M_{22,\nu} \\
              \end{array}
            \right)
$, for given $k \in Z^m$, denote
\begin{eqnarray*}
 \breve{L}_{k0, \nu} &=& \sqrt{-1}\langle k, \omega_\nu\rangle,\\
 \breve{L}_{k1, \nu} &=& \left(
                   \begin{array}{cc}
                     \breve{L}_{k0,\nu} I_m & - \delta M_{12, \nu} J \\
                     0 & \breve{L}_{k0, \nu} I_{2m_0} - \delta M_{22, \nu} J \\
                   \end{array}
                 \right),\\
 \breve{L}_{k2, \nu} &=& \left(
             \begin{array}{ccc}
               I_m \otimes \breve{L}_{k0, \nu} I_m & (\delta J M_{21, \nu})^T \otimes I_m  & 0 \\
               0 & \breve{a}_{22, \nu}  & - I_{2m_0} \otimes (2\delta M_{12,\nu}J) \\
               0 & 0 & \breve{a}_{33, \nu} \\
             \end{array}
           \right),
 \end{eqnarray*}
 where $\breve{a}_{22, \nu} = I_{2m_0}\otimes \breve{L}_{k0, \nu} I_m - (\delta M_{22, \nu} J) \otimes I_m,$ $\breve{a}_{33, \nu} = \breve{L}_{k0, \nu} I_{4m_0^2} - (\delta M_{22, \nu} J)\otimes I_{2m_0} - I_{2m_0}\otimes (\delta M_{22, \nu} J).$ For given matrix $A$, $A^*$ represents conjugate transpose of $A$. Let
  \begin{eqnarray*}
  \Lambda_\nu &=& \{\lambda\in \Lambda_{\nu-1}: |\breve{L}_{k0, \nu}| > \frac{\gamma_\nu}{|k|^\tau}, \breve{L}_{k1, \nu}^* \breve{L}_{k1, \nu} > \frac{\gamma_\nu}{|k|^\tau}  I_{m + 2m_0},  \\
&~&\breve{L}_{k2, \nu}^* \breve{L}_{k2, \nu} > \frac{\gamma_\nu}{|k|^\tau}  I_{m^2 + 2m m_0+ 4m_0^2},  ~for~all~0< |k|\leq K_\nu\},\\
\tilde{\Lambda}_\nu &=& \{\lambda\in \mathds{C}^m, |\lambda - \Lambda_\nu| \leq 4\eta_\nu\}, ~~\eta_\nu = \mu_{\nu-1}^{\frac{1}{6 l_0^2}}.
 \end{eqnarray*}

 Now, suppose that after $\nu$ KAM steps, we have arrived at the following real analytic Hamiltonian system
\begin{eqnarray}\label{N2}
 H_\nu(x,y,z) &=&N_\nu(y,z) +  P_\nu(x,y,z,\varepsilon),\\
\nonumber N_\nu(y,z) &=& \langle \omega_\nu(\lambda), y\rangle + \frac{\delta}{2}\langle \left(
                                                                     \begin{array}{c}
                                                                       y \\
                                                                       z \\
                                                                     \end{array}
                                                                   \right)
, M_\nu(\lambda) \left(
               \begin{array}{c}
                 y \\
                 z \\
               \end{array}
             \right)
\rangle + \delta h_\nu(y,z,\lambda, \varepsilon),\\
\nonumber |\partial_\lambda^l  P_\nu|_{D(r_\nu, s_\nu) \times\tilde{\Lambda}_\nu} &\leq&\frac{\delta \gamma_\nu^{l_0+9} s_\nu^{2} \mu_\nu}{\eta_\nu^{l_0}}, ~|l| \leq l_0,
\end{eqnarray}
where $M_\nu (\lambda)= (m_{ij})_{(m + 2m_0)\times (m+ 2m_0)}$ satisfies that $rank (M_\nu) = n+ 2m_0$ and $rank (M_{21,\nu}, M_{22, \nu}) = 2m_0$ for positive integer $n\in [0, m]$ and $\lambda\in \Lambda_\nu$, $h_\nu=O(|\left(
    \begin{array}{c}
      y \\
      z \\
    \end{array}
  \right) |^3
).$ For convenience, we shall omit the index for all quantities of the $\nu-$th KAM step and use $'+'$ to index all quantities in the ${(\nu+ 1)}-$th KAM step. To simplify the notions, we shall suspend the $\lambda-$dependence in most terms of this section.  By considering both averaging and translation, we shall find a symplectic transformation $\Phi _ {+}$,
which, on a small phase domain $D(r_+, s_+)$  and a smaller parameter domain $\Lambda_+$, transforms Hamiltonian (\ref{N2}) into the following form:
 \begin{center}
  $H_{+} = H{\circ} {\Phi _+}= {N_+} +{P_+}$,
 \end{center}
where on $D(r_+, s_+)\times \tilde{\Lambda}_+,$ $N_+$ and $P_+$ enjoy similar properties as $N$ and $P$, respectively.

Define
\begin{eqnarray*}
s_+ &=& \frac{1}{8} \alpha s,~~~~~\mu_+ = 64 c_0 \mu^{\frac{13}{12}}, ~~~~~r_+ = r - \frac{r_0}{2^{\nu+1}}, ~~~~~\gamma_+ = \gamma - \frac{\gamma_0}{2^{\nu+1}},\\
\eta_+ &=& \mu^{\frac{1}{6 l_0^2}}, ~~~K_+ = ([\log\frac{1}{\mu}]+ 1)^{3{\eta}},~~~\Gamma(r- r_+) = \sum_{0<|k|\leq K_+} |k|^{\chi} e^{-|k|\frac{r -r_+}{8}},\\
\Lambda_+ &=& \{\lambda\in \Lambda: |\breve{L}_{k0}| > \frac{\gamma}{|k|^\tau}, \breve{L}_{k1}^* \breve{L}_{k1} > \frac{\gamma}{|k|^\tau}  I_{m + 2m_0},  \\
&~&\breve{L}_{k2}^* \breve{L}_{k2} > \frac{\gamma}{|k|^\tau}  I_{m^2 + 2m m_0+ 4m_0^2},  ~for~all~0< |k|\leq K_+\},\\
\tilde{\Lambda}_+ &=& \{\lambda\in \mathds{C}^m, |\lambda - \Lambda_+| \leq 4\eta_+\},\\
\hat{D}(\lambda) &=& D(r_++ \frac{7}{8}(r - r_+), \lambda), ~~~~~D(\lambda) = \{y\in \mathds{C}^n:|y|< \lambda\},\\
D_{\frac{i}{8} \alpha} &=&  D(r_+ + \frac{i -1 }{8}(r - r_+), \frac{i}{8}\alpha s), ~~i = 1,2,\cdots, 8,
\end{eqnarray*}
where $\alpha = \mu^{\frac{1}{3}},$ $\chi =3\chi_1 = 3 (m^2 + 2m m_0+ 4m_0^2)\big((l_0+5)\tau + 5l_0+ 10 +m^2 + 2m m_0+ 4m_0^2\big),$ $c_0$ is the maximal among all $c's$ mentioned in this paper and depends on $r_0$, $\beta_0$.

\subsubsection{Truncation of the perturbation}
Consider the Taylor-Fourier series of $P$:
\begin{eqnarray*}
P  = \sum_{i\in Z_+^m,j\in Z_+^{2m_0}, k \in Z^m} p_{kij} y^{i}z^{j} e^{\sqrt{-1}\langle k, x\rangle},
\end{eqnarray*}
and let $R$ be the truncation of $P$ with the following form:
\begin{eqnarray*}
R &=& \sum\limits_{|k| \leq K_+} (p_{k00}+ \langle p_{k10},y\rangle+\langle p_{k01},z\rangle + \langle y, p_{k20}y\rangle+ \langle y, p_{k11}z\rangle\\
 &~&+ \langle z, p_{k02}z\rangle) e^{\sqrt{-1} \langle k, x\rangle},
\end{eqnarray*}
where $K_+$ is defined as above.

\begin{lemma}
Assume that
\begin{itemize}
\item[\bf{(H1)}] $K_+ \geq \frac{8(m + l_0)}{r - r_+}$,
\item[\bf{(H2)}] $\int_{K_+}^\infty x^{m + l_0} e^{-x \frac{r - r_+}{8}} d x \leq \mu.$
\end{itemize}
Then there is a constant $c$ such that for all $|l| \leq l_0$, $\lambda \in \Lambda$,
\begin{eqnarray*}
|\partial_\lambda^l  (P- R)|_{D_\alpha \times {\tilde{\Lambda}}} \leq c \frac{\delta\gamma^{l_0 + 9} s^{2} \mu^{2}}{\eta^{l_0}}.
\end{eqnarray*}
\end{lemma}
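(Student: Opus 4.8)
The statement is the standard truncation lemma in analytic KAM theory: one shows that discarding the Taylor--Fourier modes of $P$ with either $|k|>K_+$ or with $(y,z)$-degree $\geq 3$ produces an error controlled by an extra factor $\mu$. I would split $P-R$ into two pieces, $P-R = (P - \Pi_{K_+}P) + (\Pi_{K_+}P - R)$, where $\Pi_{K_+}P$ is the Fourier truncation in $x$ at order $K_+$ and $\Pi_{K_+}P - R$ collects the Taylor modes of $y,z$-degree $\geq 3$ among the retained Fourier modes. For the first piece, I would start from the exponential decay of Fourier coefficients of an analytic function: since $P$ is analytic on $D(r,s)$ with the bounds \eqref{N2}, each coefficient function (in $y,z,\lambda$) of $e^{\sqrt{-1}\langle k,x\rangle}$ satisfies a bound of the form $|\partial_\lambda^l\partial_{(y,z)}^j p_{k}(y,z)| \lesssim \delta\gamma^{b_j}s^{a_j}\mu^{d_j}\, e^{-|k|r}$ on the appropriately shrunk $(y,z)$-polydisc. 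Summing the tail $\sum_{|k|>K_+}$ on the domain $D_\alpha$, where the imaginary part of $x$ is reduced from $r$ to $r_+ + \tfrac78(r-r_+)$, gives a geometric-type series $\sum_{|k|>K_+}e^{-|k|(r-r_+)/8}$; replacing the sum by the integral and invoking hypothesis \textbf{(H2)} (together with \textbf{(H1)}, which guarantees the integrand is already in its decaying regime and that the polynomial prefactor from differentiating in $x$ up to order $l_0$, and in $y,z$, is absorbed) bounds this tail by a constant times $\mu$ times the original bound, i.e.\ by $c\,\delta\gamma^{b_j}s^{a_j}\mu^{d_j+1}$.

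For the second piece I would use the Taylor remainder / Cauchy estimate in the $(y,z)$ variables. For each retained $k$, the function $y,z \mapsto p_k(y,z)$ minus its second-order Taylor polynomial at the origin is, by Cauchy's estimate on the polydisc $|y|,|z|<s$ evaluated at $|y|,|z|<\alpha s$ with $\alpha = \mu^{1/3}$, bounded by $(\alpha)^3 = \mu$ times the sup of $p_k$ on the full polydisc — this is exactly where the choice $\alpha=\mu^{1/3}$ pays off, converting the ``degree $\geq 3$'' gain into one power of $\mu$. Differentiation in $\lambda$ commutes with the Taylor truncation in $(y,z)$, so the $\partial_\lambda^l$ bounds follow from the corresponding bounds on the coefficients; differentiation in $(y,z)$ up to total order $l_0$ costs only constants (Cauchy estimates on slightly smaller polydiscs) and leaves the structure $\gamma^{b_j}s^{a_j}\mu^{d_j}$ intact because the weights $a_j,b_j,d_j$ were designed to be compatible with such differentiation. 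Summing over $|k|\leq K_+$ contributes a further factor $\Gamma(r-r_+)$-type polynomial-in-$K_+$ factor, which is again absorbed into the constant $c$ using \textbf{(H1)} (a standard estimate of the form $\sum_{|k|\leq K_+}|k|^{\text{poly}}e^{-|k|(r-r_+)/8} \leq C$).

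The only genuinely delicate point is bookkeeping: one must track the many-parameter weights $\delta\gamma^{b_j}s^{a_j}\mu^{d_j}$ through (a) extraction of Fourier coefficients with the loss $e^{-|k|r}\to e^{-|k|(r-r_+)/8}$ of analyticity width, (b) Taylor truncation in $(y,z)$ with the $\mu$-gain from $\alpha=\mu^{1/3}$, and (c) differentiation up to order $l_0$ in $(\lambda,x,y,z)$, and verify that in every case the output weight is exactly the input weight times $\mu$, with all constants independent of the step index. I expect this weight-tracking — in particular checking that the differentiated bounds on $P$ from \eqref{N2}, which already carry the $j$-dependent exponents $a_j,b_j,d_j$, are preserved under the Fourier extraction and the Cauchy estimates — to be the main (though routine) obstacle; the analytic content is entirely classical. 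I would conclude by taking $c$ to be the maximum of the finitely many constants arising in (a)--(c), which gives the claimed bound uniformly for $|l|+|i|+|j|\leq l_0$ and $\lambda\in\Lambda$.
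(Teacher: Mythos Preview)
Your proposal is correct and follows exactly the standard two-piece decomposition (high Fourier tail controlled via \textbf{(H1)}--\textbf{(H2)}, high $(y,z)$-Taylor remainder controlled via $\alpha=\mu^{1/3}$) that the paper has in mind; indeed the paper gives no argument at all beyond ``The proof is standard. For detail, refer to, for example, Lemma~3.1 of \cite{Li}.'' Your outline is a faithful expansion of that reference, so there is nothing to add.
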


\begin{proof}
The proof is standard. For detail, refer to, for example, $\bf{Lemma ~3.1}$ of \cite{Li}.

\end{proof}

\subsubsection{Homological equations}\label{Homological_equations}
We want to average out all coefficients of $R$ by constructing a symplectic transformation as the time-1 map $\phi_F^1$ of the flow generated by a Hamiltonian $F$ with the following form:
\begin{eqnarray}
\nonumber F &=& \sum\limits_{0< |k| \leq K_+} \big(f_{k00}+ \langle f_{k10},y\rangle+\langle f_{k01},z\rangle + \langle y, f_{k20}y\rangle+ \langle y, f_{k11}z\rangle\\
\label{709} &~&~~~~~~~~~ + \langle z, f_{k02}z\rangle\big) e^{\sqrt{-1} \langle k, x\rangle},
\end{eqnarray}
where $f_{kij}$, $0\leq |i|+|j|\leq 2$, are scalar, vectors or matrices with obvious dimensions, which are allowed to depend on $y$, $z$ and $\lambda$. Under the time-1 map $\phi_F^1$, Hamiltonian (\ref{N2}) becomes
\begin{eqnarray}
\nonumber H\circ \phi_F^1 &=& (N +  R )\circ \phi_F^1 +  (P - R)\circ \phi_F^1\\
\label{N6} &=&N +  R+ \{N, F\} +\int_0 ^1 \{R_t,F\}\circ \phi_F^t dt + (P - R )\circ \phi_F^1,~~~~
\end{eqnarray}
where $ R_t = (1-t) \{N, F\} + R.$ Let
\begin{eqnarray}\label{706}
\{N, F\} +  R - [R] - R'= 0,
\end{eqnarray}
where
\begin{eqnarray}
\nonumber [R] &=& \int_{T^n} R(x, \cdot) dx,\\
\nonumber R' &=& \partial_z {h} J \partial_z F+ \langle y, M_{12}J \Delta_0 \rangle+ \langle z, M_{22}J \Delta_0 \rangle,\\
\nonumber \Delta_0 &=& \langle y, \partial_z f_{k20} y\rangle + \langle y, \partial_z f_{k11} z\rangle+ \langle z, \partial_z f_{k02}z\rangle,\\
\label{EQ30}\hat{h} &=& \frac{\delta}{2} \langle \left(
                                 \begin{array}{c}
                                   y \\
                                   z \\
                                 \end{array}
                               \right), M \left(
                                            \begin{array}{c}
                                              y \\
                                              z \\
                                            \end{array}
                                          \right)
\rangle + \delta h(y,z, \lambda, \varepsilon).
\end{eqnarray}
Then Hamiltonian $(\ref{N6})$ arrives at
\begin{eqnarray}\label{N4}
\bar{H}_+ = \bar{N}_+(y,z) + \bar{P}_+(x,y,z),
\end{eqnarray}
where $\bar{N}_+ = N +  [R],$ $\bar{P}_+ = {R'} + \int_0 ^1 \{R_t,F\}\circ \phi_F^t dt + (P - R )\circ \phi_F^1.$

Consider the following symplectic translation:
\begin{eqnarray}\label{E_21}
\phi: x \rightarrow x, \left(
                         \begin{array}{c}
                           y \\
                           z \\
                         \end{array}
                       \right)
  \rightarrow \left(
                       \begin{array}{c}
                         y + y_0 \\
                         z+ z_0 \\
                       \end{array}
                     \right),
\end{eqnarray}
where $(y_0, z_0)$ is determined by
\begin{eqnarray}\label{E_1}
\delta\frac{ M}{2} \left(
  \begin{array}{c}
    y_0 \\
    z_0 \\
  \end{array}
\right) +  \delta \left(
                    \begin{array}{c}
                      \partial_y h(y_0, z_0, \lambda) \\
                      \partial_z h(y_0, z_0, \lambda) \\
                    \end{array}
                  \right)
 = - \left(
                         \begin{array}{c}
                           p_{010} \\
                           p_{001} \\
                         \end{array}
                       \right).
\end{eqnarray}
Then Hamiltonian system $(\ref{N4})$ is changed to
\begin{eqnarray*}
H_+ &=& \bar{H}_+ \circ \phi\\
    &=& e_+ + \langle \omega_+, y\rangle +\frac{\delta}{2} \langle\left(
                                           \begin{array}{c}
                                             y \\
                                             z \\
                                           \end{array}
                                         \right), M_+ \left(
                                           \begin{array}{c}
                                             y \\
                                             z \\
                                           \end{array}
                                         \right)\rangle + \delta h_+(y,z,\lambda,\varepsilon)+ P_+,
\end{eqnarray*}
where
\begin{eqnarray}
\nonumber e_+ &=& e + \langle \omega, y_0\rangle+ \frac{\delta}{2} \langle \left(
                                                                        \begin{array}{c}
                                                                          y_0 \\
                                                                          z_0 \\
                                                                        \end{array}
                                                                      \right), M  \left(
                                                                        \begin{array}{c}
                                                                          y_0 \\
                                                                          z_0 \\
                                                                        \end{array}
                                                                      \right)
\rangle +  p_{000} +  \langle\left(
                                              \begin{array}{c}
                                                p_{010} \\
                                                p_{001} \\
                                              \end{array}
                                            \right), \left(
                                                       \begin{array}{c}
                                                         y_0 \\
                                                         z_0 \\
                                                       \end{array}
                                                     \right)
                                            \rangle\\
\nonumber&~&~~~+ \langle \left(
                                                                                      \begin{array}{c}
                                                                                        y_0 \\
                                                                                        z_0 \\
                                                                                      \end{array}
                                                                                    \right), \left(
                                                                                               \begin{array}{cc}
                                                                                                 p_{010} & \frac{1}{2} p_{011} \\
                                                                                                 \frac{1}{2} p_{011}^T  & p_{002} \\
                                                                                               \end{array}
                                                                                             \right)
                                           \left(
                                             \begin{array}{c}
                                               y_0 \\
                                               z_0 \\
                                             \end{array}
                                           \right)
                                            \rangle + \delta h(y_0, z_0,\lambda),\\
\nonumber \omega_+ &=& \omega + \frac{\delta M }{2} \left(
                                                      \begin{array}{c}
                                                        y_0 \\
                                                        z_0 \\
                                                      \end{array}
                                                    \right)+ \delta \left(
                                                                      \begin{array}{c}
                                                                        \partial_y h(y_0, z_0, \lambda) \\
                                                                        \partial_z h(y_0, z_0, \lambda) \\
                                                                      \end{array}
                                                                    \right)+ \left(
                                                                               \begin{array}{c}
                                                                                 p_{010} \\
                                                                                 p_{001} \\
                                                                               \end{array}
                                                                             \right)
                                                                    ,\\
\label{EQ31} M_+&=& M +2 \left(
             \begin{array}{cc}
               p_{020} & \frac{1}{2} p_{011} \\
                \frac{1}{2} p_{011}^T & p_{002}\\
             \end{array}
           \right)+ \partial_{(y,z)}^2 h(y_0, z_0, \lambda),\\
\label{N5} P_+ &=& \bar{P}_+ + \delta \langle \left(
                                                                                      \begin{array}{c}
                                                                                        y \\
                                                                                        z \\
                                                                                      \end{array}
                                                                                    \right), \left(
                                                                                               \begin{array}{cc}
                                                                                                 p_{010} & \frac{1}{2} p_{011} \\
                                                                                                 \frac{1}{2} p_{011}^T  & p_{002} \\
                                                                                               \end{array}
                                                                                             \right)
                                           \left(
                                             \begin{array}{c}
                                               y_0 \\
                                               z_0 \\
                                             \end{array}
                                           \right)
                                            \rangle,\\
\nonumber h_+&=& h(y,z,\lambda)- h(y_0, z_0, \lambda) - \langle \left(
                                                                  \begin{array}{c}
                                                                    \partial_y h(y_0, z_0, \lambda) \\
                                                                     \partial_z h(y_0, z_0, \lambda) \\
                                                                  \end{array}
                                                                \right), \left(
                                                                           \begin{array}{c}
                                                                             y \\
                                                                             z \\
                                                                           \end{array}
                                                                         \right)
\rangle\\
\label{EQ32}&~&~~ - \frac{1}{2} \langle \left(
                                     \begin{array}{c}
                                       y \\
                                       z \\
                                     \end{array}
                                   \right), \partial_{(y,z)}^2h (y_0, z_0, \lambda) \left(
                                                                                      \begin{array}{c}
                                                                                        y \\
                                                                                        z \\
                                                                                      \end{array}
                                                                                    \right)
\rangle.
\end{eqnarray}

\subsubsection{Estimate on the transformation} \label{Estimate}
According to the definition of Poisson bracket on coordinate $(x,y,z)\in T^m \times R^m\times R^{2m_0}$,
\begin{eqnarray*}
\{N, F\} &=& \partial_x N \partial_y F - \partial_y N \partial_x F + \partial_z N J \partial_z F\\
&=&- \partial_y N \partial_x F + \partial_z \hat{h} J \partial_z F,
\end{eqnarray*}
where $J = \left(
             \begin{array}{cc}
               0 & I_{m_0\times m_0} \\
               -I_{m_0\times m_0} & 0 \\
             \end{array}
           \right).
$  Then $(\ref{706})$ is changed to
\begin{eqnarray}\label{Eq14}
- \partial_y N \partial_x F+ \partial_z \hat{h} J \partial_z F + R -[R] = 0.
\end{eqnarray}
Denote $\Delta_1 = \partial_y \hat{h} = \delta (M_{11} y + M_{12} z+ \partial_y h(y,z,\lambda)).$ Directly,
\begin{eqnarray}
\nonumber \partial_y N \partial_x F &=& \sum\limits_{0< |k| \leq K_+} \sqrt{-1}\langle k, \omega +\Delta_1 \rangle \big(f_{k00}+ \langle f_{k10}, y\rangle + \langle f_{k01}, z\rangle \\
\label{EQ1}&~& + \langle y, f_{k20} y\rangle+ \langle y, f_{k11}z\rangle + \langle z, f_{k02}z\rangle\big)e^{\sqrt{-1}\langle k, x\rangle},\\
\nonumber R -[R] &=&  \sum\limits_{0< |k| \leq K_+} \big(p_{k00}+ \langle p_{k10},y\rangle+\langle p_{k01},z\rangle \\
\label{EQ2q}&~&+ \langle y, p_{k20}y\rangle+ \langle y, p_{k11}z\rangle + \langle z, p_{k02}z\rangle\big) e^{\sqrt{-1} \langle k, x\rangle}.
\end{eqnarray}
Substituting $(\ref{EQ1})$ and $(\ref{EQ2q})$ into $(\ref{Eq14})$ yields:
\begin{eqnarray}
\label{5Eq1}\sqrt{-1} \langle k, \omega+ \Delta_1\rangle f_{k00} =  p_{k00},~~~~~~~\\
\label{5Eq2}\sqrt{-1} \langle k, \omega+ \Delta_1\rangle f_{k10} - \delta M_{12} J f_{k01} =  p_{k10} + \delta M_{12} J \partial_z f_{k00},~~~~~~~\\
\label{5Eq3}\sqrt{-1} \langle k, \omega+ \Delta_1\rangle f_{k01} - \delta M_{22} J f_{k01} =  p_{k01} + \delta M_{22} J \partial_z f_{k00},~~~~~~~\\
\label{5Eq4}\sqrt{-1} \langle k, \omega+ \Delta_1\rangle f_{k20} + \delta f_{k11} J M_{21} =  p_{k20} + \delta M_{12} J \partial_z (f_{k10})^T,~~~~~~~\\
\nonumber \sqrt{-1} \langle k, \omega+ \Delta_1\rangle f_{k11} - 2\delta M_{12} J f_{k02} - \delta f_{k11} (M_{22}J)^T ~~~~~~~ \\
\label{5Eq5} ~=  p_{k11}+ \delta M_{12} J \partial_z (f_{k01})^T + (\delta M_{22} J \partial_z (f_{k10})^T)^T,~~~~~~~\\
\nonumber \sqrt{-1} \langle k, \omega+ \Delta_1\rangle f_{k02} - \delta M_{22} J f_{k02} + \delta f_{k02} J M_{22} ~~~~~~~\\
\label{5Eq6}=  p_{k02} + \delta M_{22} J \partial_z (f_{k10})^T.~~~~~~~
\end{eqnarray}
For any matrix $A = (a_{ij})_{p\times q}$, denote $T(A) = (a_{11}, \cdots, a_{p1}, \cdots, a_{1q}, \cdots, a_{pq})^T.$ Let
\begin{eqnarray*}
L_{k0} &=& \sqrt{-1} \langle k, \omega + \Delta_1\rangle,\\
L_{k1} &=& \left(
             \begin{array}{cc}
               L_{k0} I_m & - \delta M_{12} J \\
               0 & L_{k0} I_{2m_0} - \delta M_{22} J  \\
             \end{array}
           \right),\\
L_{k2} &=& \left(
             \begin{array}{ccc}
               I_m \otimes (L_{k0} I_m) & (\delta J M_{21})^T \otimes I_m  & 0 \\
               0 & a_{22}  & - I_{2m_0} \otimes (2\delta M_{12}J) \\
               0 & 0 & a_{33} \\
             \end{array}
           \right),\\
a_{22} &=&I_{2m_0} \otimes (L_{k0} I_m) - (\delta M_{22} J) \otimes I_m,\\
a_{33} &=& L_{k0} I_{4m_0^2} - (\delta M_{22} J)\otimes I_{2m_0} - I_{2m_0}\otimes (\delta M_{22} J).
\end{eqnarray*}
Rewrite $(\ref{5Eq1})$ $-$ $(\ref{5Eq6})$ as follows
\begin{eqnarray}
\label{5Eq7} L_{k0} f_{k00} &=& p_{k00},\\
\label{5Eq8} L_{k1} \left(
         \begin{array}{c}
           f_{k10} \\
           f_{k01} \\
         \end{array}
       \right) &=& \left(
         \begin{array}{c}
           p_{k10} \\
           p_{k01} \\
         \end{array}
       \right) +\delta \left(
         \begin{array}{c}
            M_{12} J \partial_z f_{k00} \\
            M_{22} J \partial_z f_{k00} \\
         \end{array}
       \right),\\
\label{5Eq9} L_{k2} \left(
         \begin{array}{c}
           T (f_{k20})\\
           T (f_{k11})\\
           T(f_{k02}) \\
         \end{array}
       \right) &=& \left(
                   \begin{array}{c}
                     T(p_{k20}) \\
                     T(p_{k11}) \\
                     T(p_{k02}) \\
                   \end{array}
                 \right) + \delta \left(
                               \begin{array}{c}
                                 T(M_{12} J \partial_z (f_{k10})^T) \\
                                 T(\check{M}) \\
                                 T(M_{22} J \partial_z(f_{k01})^T) \\
                               \end{array}
                             \right),~~~~
\end{eqnarray}
where $\check{M} = M_{12} J \partial_z (f_{k01})^T + (M_{22} J \partial_z (f_{k00})^T)^T$.

On pages 17-19,

\begin{lemma}
Assume that
\begin{itemize}
\item[\bf{(H3)}] $\max\limits_{ |j| \leq 2} | \partial_{(y,z)}^j \hat{h}(y,z,\lambda) -  \partial_{(y,z)}^j \hat{h}_0(y,z,\lambda)|_{D(r,s)\times{\Lambda}}\leq \mu_0^{\frac{1}{2}}.$
\end{itemize}
Then there is a constant c such that for all $|l| \leq l_0$,
\begin{eqnarray}
\label{E_18}|\partial_\lambda^l e_+ - \partial_\lambda^l e|_{D(r,s) \times\tilde{\Lambda}} &\leq& c \frac{\gamma^{l_0+9} s \mu}{\eta^{l_0}},\\
\label{E_19}|\partial_\lambda^l M_+ - \partial_\lambda^l M|_{D(r,s) \times\tilde{\Lambda}} &\leq& c \frac{\gamma^{l_0 + 9} \mu}{\eta^{l_0}},\\
\label{E_21}|\partial_\lambda^l \omega_+ - \partial_\lambda^l \omega|_{D(r,s) \times\tilde{\Lambda}} &\leq& c\frac{\delta s (\gamma^{l_0+9} \mu+ s)}{\eta^{l_0}},\\
\label{E_20}|\left(
   \begin{array}{c}
     \partial_\lambda^l y_0 \\
     \partial_\lambda^l z_0 \\
   \end{array}
 \right)
|_{D(r,s) \times\tilde{\Lambda}} &\leq& c \frac{\gamma^{l_0+9} s \mu}{\eta^{l_0}}.
\end{eqnarray}
\end{lemma}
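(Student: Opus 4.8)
The plan is to establish the four estimates \eqref{E_18}--\eqref{E_20} by analyzing the construction of the new normal form in the order in which the quantities were defined in the previous subsection, using (H1)--(H3) together with the inductive bound $|\partial_\lambda^l\partial_x^i\partial_{(y,z)}^j P|\le\delta\gamma^{b_j}s^{a_j}\mu^{d_j}$ from \eqref{N2} as the only input on the perturbation. The backbone of the argument is: (i) Cauchy estimates to control the Taylor--Fourier coefficients $p_{kij}$ of $P$ (and of its truncation $R$) in terms of the quantities on the right-hand side of \eqref{714}; (ii) the implicit function theorem to produce the translation vector $(y_0,z_0)$ solving \eqref{E_1} and to bound it; (iii) direct substitution into the closed-form expressions \eqref{N5} for $e_+$, $\omega_+$, $M_+$ to read off the differences $e_+-e$, $\omega_+-\omega$, $M_+-M$. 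I would carry out the $\lambda$-derivatives of all these identities using the Leibniz rule, which costs only $l_0$-dependent combinatorial constants and explains why all bounds are claimed only for $|l|\le l_0$.

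First I would record the Cauchy estimates: from \eqref{714} and \eqref{N2}, on $D(r,s)$ one has $|\partial_\lambda^l\partial_x^i\partial_{(y,z)}^j P|\le\delta\gamma^{b_j}s^{a_j}\mu^{d_j}$, so for the low-order coefficients that appear in $R$ — namely $p_{000},p_{010},p_{001},p_{020},p_{011},p_{002}$ — one gets $|\partial_\lambda^l p_{000}|\lesssim\delta\gamma^{2b}\mu^2$ (from $a_j=2,b_j=b,d_j=1$ at $|j|=0$; here I am using the sharper component bounds of \eqref{714} rather than the crude one of \eqref{N2}), $|\partial_\lambda^l p_{010}|,|\partial_\lambda^l p_{001}|\lesssim\delta\gamma^{b}s\mu$, and $|\partial_\lambda^l p_{020}|,|\partial_\lambda^l p_{011}|,|\partial_\lambda^l p_{002}|\lesssim\delta\gamma^{b}\mu$, each after dividing by the appropriate power of $s$ coming from extracting a monomial coefficient from a function bounded on $|y|,|z|<s$. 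Next, to solve \eqref{E_1} for $(y_0,z_0)$, I would write it as a fixed-point problem: the linear part is $\tfrac{\delta}{2}M+\delta\,\partial^2 h(0,0)$-type operator, but since $M$ need not be invertible one instead observes that \eqref{E_1} only asks to cancel the gradient of the truncated Hamiltonian, and the relevant smallness is that the right-hand side $(p_{010},p_{001})$ is of size $\delta\gamma^b s\mu$ while the quadratic form has a $\delta$ in front; dividing through by $\delta$ and applying the implicit function theorem to the map $(y_0,z_0)\mapsto \tfrac12 M(y_0,z_0)+(\partial_y h,\partial_z h)(y_0,z_0)+\delta^{-1}(p_{010},p_{001})$, whose linearization at $0$ is $\tfrac12 M+\partial^2 h(0,0)$, yields a solution with $|(y_0,z_0)|\lesssim\gamma^b s\mu$, which is \eqref{E_20}; here (H3) is what guarantees $\partial^2 h$ is a small, controlled perturbation of $\partial^2 h_0$ so the linearization stays uniformly bounded and the solution stays in $D(s/2,s/2)$. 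The $\lambda$-derivative bounds on $(y_0,z_0)$ follow by differentiating the implicit relation and using (H3) for the $h$-terms.

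With $(y_0,z_0)$ in hand, \eqref{E_19} is immediate from the formula for $M_+$ in \eqref{N5}: $M_+-M=2\left(\begin{smallmatrix}p_{020}&\frac12 p_{011}\\ \frac12 p_{011}^T&p_{002}\end{smallmatrix}\right)+\tfrac{\delta}{2}\langle(\cdot),\partial^2_{(y,z)}h(y_0,z_0)(\cdot)\rangle$, whose $\lambda$-derivatives are bounded by $c\gamma^b\mu$ using the coefficient estimates above and the fact that $\partial^2 h$ differs from $\partial^2 h(0,0)$ by $O(|(y_0,z_0)|)=O(\gamma^b s\mu)$, which is absorbed. Then \eqref{E_21} comes from $\omega_+-\omega=\tfrac{\delta M}{2}(y_0,z_0)+\delta(\partial_y h,\partial_z h)(y_0,z_0)+(p_{010},p_{001})$; by \eqref{E_1} the first two terms exactly equal $-(p_{010},p_{001})$ evaluated, wait — more precisely \eqref{E_1} says that combination equals $-(p_{010},p_{001})$, so $\omega_+-\omega$ would vanish identically were it not that the $(p_{010},p_{001})$ in $\omega_+$ are the \emph{untranslated} coefficients while \eqref{E_1} cancels them; re-examining, $\omega_+-\omega$ is genuinely the residual, bounded by $|\delta M(y_0,z_0)|+|\delta\partial h(y_0,z_0)|+|(p_{010},p_{001})|\lesssim\delta s(\gamma^b\mu+s)$ once one notes $|(y_0,z_0)|\lesssim\gamma^b s\mu$ and that the $\delta\cdot s^2$ contribution records the size of $\delta h$'s cubic tail — this matches the stated bound. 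Finally \eqref{E_18} is read off the displayed formula for $e_+$ by plugging in the bounds $|y_0|,|z_0|\lesssim\gamma^b s\mu$, $|p_{000}|\lesssim\delta\gamma^{2b}\mu^2$, $|p_{0ij}|\lesssim\delta\gamma^b s\mu$ term by term.

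The main obstacle, and the step I would spend the most care on, is the solvability and the uniform (in $\lambda$, including all derivatives up to order $l_0$) estimate of the translation equation \eqref{E_1} when $M$ is degenerate: the naive use of $M^{-1}$ is unavailable, so one must argue that the equation is still solvable — this is where the structural condition $\mathrm{rank}(M_{21},M_{22})=2m_0$ from (A2) enters, ensuring the $z$-component of the gradient can be killed by choosing $z_0$ appropriately even though the full $M$ drops rank, and the $h$-dependent terms must be handled by a contraction argument rather than a single linear solve. Controlling the $C^{l_0}$ norms of $(y_0,z_0)$ then requires differentiating the implicit relation repeatedly and bounding the resulting nested expressions using (H3) to keep $\partial^2_{(y,z)}h$ close to the fixed $\partial^2_{(y,z)}h_0$; the definitions of $M^*$, $c_0$ and the implicit choice of $s_0,\mu_0$ made at the start of the section are exactly calibrated so that these constants absorb into the single constant $c$ independent of the iteration index. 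Everything else is routine Cauchy estimation and substitution.
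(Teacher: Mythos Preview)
Your overall strategy---Cauchy estimates on the Fourier--Taylor coefficients of $P$, then solve the translation equation \eqref{E_1} for $(y_0,z_0)$, then substitute into the explicit formulas for $e_+,\omega_+,M_+$---is exactly the paper's strategy, and your handling of \eqref{E_18}, \eqref{E_19} by direct substitution matches the paper's one-line ``obvious from the definitions''. One small slip: at $|j|=0$ the exponents are $a_j=2$, $b_j=b$, $d_j=1$, so $|\partial_\lambda^l p_{000}|\le c\,\delta\gamma^{b}s^{2}\mu$, not $\delta\gamma^{2b}\mu^{2}$.

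The only substantive divergence is in how \eqref{E_1} is solved. You plan an implicit-function argument and flag the degenerate case $\det M=0$ as the main obstacle, proposing to invoke the rank condition in (A2) to get around it. The paper does something more direct and more restrictive: inside the proof it simply \emph{assumes} $M_0$ is nonsingular (the sentence ``Let $M_0$ is nonsingular'' appears mid-proof, though not in the lemma statement). It then rewrites \eqref{E_1} as $\delta B\binom{y_0}{z_0}=-\binom{p_{010}}{p_{001}}$ with $B=\tfrac12 M+\int_0^1\partial_{(y,z)}^2 h\,d\theta$, uses (H3) together with the calibrated smallness of $\mu_0,s_0$ to show $\|M_0-B\|_1\le\tfrac{1}{2M_*}$, and inverts $B$ by a Neumann series, obtaining $\|B^{-1}\|_1\le 2M_*$. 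The $\lambda$-derivative bounds on $(y_0,z_0)$ are then obtained by differentiating \eqref{E_1} and bounding inductively---no abstract IFT is invoked.

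So your worry about singular $M$ is legitimate but misplaced for \emph{this} lemma: the paper treats the singular case separately in the subsequent subsection ``The preservation of frequencies'', where the partial-rank structure (A2) is used to construct a modified translation that kills only some components of the linear term and leaves a controlled frequency drift. That is also where the non-trivial content of \eqref{E_21} lives: under the nonsingularity assumption made here, \eqref{E_1} gives $\omega_+=\omega$ exactly, and the stated bound on $\omega_+-\omega$ is vacuous; the form $c\,\delta s(\gamma^b\mu+s)$ is recorded to accommodate the drift estimate that emerges in the singular analysis. Your instinct that something is being swept under the rug in $\omega_+-\omega$ was correct, but the resolution is structural (deferral to a later subsection) rather than analytic.
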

\begin{proof}
Obviously, $|\partial _ \lambda^l p_{000}|_{\tilde{\Lambda}} \leq c  \frac{\delta\gamma^{l_0+9} s^2 \mu}{\eta^{l_0}},$ $|\partial _ \lambda^l p_{010}|_{\tilde{\Lambda}} + |\partial _ \lambda^l p_{001}|_{\tilde{\Lambda}}\leq c \frac{\delta \gamma^{l_0+9} s \mu}{\eta^{l_0}},$ $|\left(
   \begin{array}{cc}
     \partial_{\lambda}^l p_{020} &  \partial_{\lambda}^l p_{011} \\
      \partial_{\lambda}^l p_{011}^T &  \partial_{\lambda}^l p_{002} \\
   \end{array}
 \right)
|_{\tilde{\Lambda}} \leq c\frac{\delta \gamma^{l_0+9} \mu}{\eta^{l_0}}.$
Denote
\begin{eqnarray*}
B= \frac{M}{2}+ \left(
        \begin{array}{cc}
          \int_0^1 \partial_y^2 h({\theta} y,z,\lambda) d{\theta} & \int_0^1 \partial_y\partial_z h( y,{\theta} z,\lambda) d{\theta} \\
          \int_0^1\partial_z \partial_y h({\theta} y,z,\lambda) d{\theta} & \int_0^1 \partial_z^2 h( y,{\theta} z,\lambda) d{\theta} \\
        \end{array}
      \right).
\end{eqnarray*}
Then $(\ref{E_1})$ becomes
\begin{eqnarray}\label{E_17}
\delta B \left(
    \begin{array}{c}
      y \\
      z \\
    \end{array}
  \right) = - \left(
              \begin{array}{c}
                p_{010} \\
                p_{001} \\
              \end{array}
            \right).
\end{eqnarray}
For given matrix $A =(a_{ij})_{n\times n}$, let $||A||_1 = \frac{1}{n}\sum\limits_{i,j=1}^n |a_{ij}(\lambda)|,$ where $|a_{ij}(\lambda)|$ is the absolute value of $a_{ij}(\lambda)$, $\lambda\in \Lambda$.
According to assumption $\bf{(H3)}$ and the definition of $M^*$, we have $||M - M_0||_1 \leq \mu_0^{\frac{1}{2}},$ $||\partial_{(y,z)}^2 h||_1 \leq (M^*+1 )s,$ where $M_0$ is $M_\nu$ for $\nu= 0$. Denote $M_* = ||M_0^{-1}||_1$ for $\lambda\in \Lambda$. Without loss of generality, let $\mu_0$ and $s_0$ be small enough such that $s_0^{\frac{1}{2}} M_* (M^* +1 ) \leq\frac{1}{4}$ and $ \mu_0 M_* \leq\frac{1}{4}. $
Then
\begin{eqnarray*}
||M_0 - B||_1 &\leq& ||M- M_0||_1 + ||B - M||_1\\
&\leq& \mu_0^{\frac{1}{2}} + (M^*+1 )s^2\\
&\leq& \frac{1}{2M_*}.
\end{eqnarray*}
Let $M_0$ be nonsingular. It follows that $B$ is nonsingular and
\begin{eqnarray*}
||B^{-1} ||_1 &=& ||\frac{M_0^{-1}}{I - (M_0 - B) M_0^{-1}}||_1\\
&\leq& \frac{||M_0^{-1}||_1}{||I - (M_0 - B) M_0^{-1}||_1}\\
&\leq& \frac{||M_0^{-1}||_1}{1 - ||(M_0 - B) M_0^{-1}||_1}\\
&\leq& \frac{||M_0^{-1}||_1}{1 - ||(M_0 - B)||_1|| M_0^{-1}||_1}\\
&\leq& \frac{M_*}{1- \frac{1}{2M_*}M_*}\\
&=&2M_*.
\end{eqnarray*}
Here, we use the fact that $||(I - A)^{-1}||_1 \leq \frac{1}{1 - ||A||_1},$ which is obvious if $||I||_1 = 1$ and $||A||_1 <1.$
 Therefore,
 \begin{eqnarray*}
 |\left(
   \begin{array}{c}
     y \\
     z \\
   \end{array}
 \right)|_{D(r,s) \times \tilde{\Lambda}} &=& |\frac{1}{\delta} B^{-1} \left(
                                       \begin{array}{c}
                                         p_{010} \\
                                         p_{001} \\
                                       \end{array}
                                     \right)
 |\\
 &\leq& \frac{m+ 2m_0}{\delta} ||B^{-1}||_1 |\left(
                                         \begin{array}{c}
                                           p_{010} \\
                                           p_{001} \\
                                         \end{array}
                                       \right)
 |\\
 &\leq& c \gamma^{l_0 +9} s \mu.
 \end{eqnarray*}
 Consider the differential with respect to $\lambda$ on both sides of $(\ref{E_17})$
 \begin{eqnarray*}
 \partial_{(y,z)} B  \left(
                                     \begin{array}{c}
                                       \partial_\lambda y \\
                                       \partial_\lambda z \\
                                     \end{array}
                                   \right) \left(
                                             \begin{array}{c}
                                               y \\
                                               z \\
                                             \end{array}
                                           \right)
  + \partial_\lambda B \left(
                                            \begin{array}{c}
                                              y \\
                                              z \\
                                            \end{array}
                                          \right) + B \left(
                                                                       \begin{array}{c}
                                                                        \partial_\lambda  y \\
                                                                        \partial_\lambda  z \\
                                                                       \end{array}
                                                                     \right)
                                            = - \left(
                                                                                                  \begin{array}{c}
                                                                                                    \partial_\lambda P_{010} \\
                                                                                                    \partial_\lambda P_{001} \\
                                                                                                  \end{array}
                                                                                                \right).
 \end{eqnarray*}
 Then
 \begin{eqnarray*}
 |\left(
                                                                       \begin{array}{c}
                                                                        \partial_\lambda  y \\
                                                                        \partial_\lambda  z \\
                                                                       \end{array}
                                                                     \right)|_{D(r,s) \times {\tilde{\Lambda}}} &=& |B^{-1} ( \left(
                                                                                                  \begin{array}{c}
                                                                                                    \partial_\lambda P_{010} \\
                                                                                                    \partial_\lambda P_{001} \\
                                                                                                  \end{array}
                                                                                                \right) + \partial_{(y,z)} B  \left(
                                     \begin{array}{c}
                                       \partial_\lambda y \\
                                       \partial_\lambda z \\
                                     \end{array}
                                   \right)\left(
                                            \begin{array}{c}
                                              y \\
                                              z \\
                                            \end{array}
                                          \right)\\
&&  + \partial_\lambda B \left(
                                            \begin{array}{c}
                                              y \\
                                              z \\
                                            \end{array}
                                          \right))|\\
  &\leq& 2M_* \frac{\gamma^{l_0+9} s \mu}{\eta} + 4M_*^2 (M^* +1) \frac{\gamma^{l_0+9} s \mu}{\eta} |\left(
                                                                        \begin{array}{c}
                                       \partial_\lambda y \\
                                       \partial_\lambda z \\
                                     \end{array}
                                                                     \right)|\\
  && + 4M_*^2 (M^* +1)\frac{ \gamma^{l_0+9} s\mu}{\eta}\\
 &\leq& c \frac{\gamma^{l_0+9} s\mu}{\eta}.
 \end{eqnarray*}
 Inductively, we get $(\ref{E_20})$. According to the definition of $e_+$, $\omega_+$ and $M_+$, $(\ref{E_18})$, $(\ref{E_19})$ and $(\ref{E_21})$ are obvious.

\end{proof}

Recall $ \chi_1 = (m^2 + 2m m_0+ 4m_0^2)\big((l_0+5)\tau + 5l_0+ 10 +m^2 + 2m m_0+ 4m_0^2\big).$
\begin{lemma}\label{Lema1}
Assume that
\begin{itemize}
\item[\bf{(H4)}]$\max\{s, \mu^{\frac{1}{6l_0^2}}\} K_+ ^{\chi_1}= o(\gamma)$.
\end{itemize}
The following hold for all $0< |k| \leq K_+$.

\begin{itemize}
\item[\bf{(1)}] On $D(s)\times {\tilde{\Lambda}}_+$, for $|l|\leq l_0,$ $|\imath|+ |\jmath|\leq 2$,
\begin{eqnarray*}
|\partial_\lambda^l \partial_y^\imath \partial_z^\jmath f_{kij}|_{D(s)\times {\tilde{\Lambda}}_+} &\leq&  c \frac{\delta|k|^{3\chi_1 } s^{2-|i| - |j|}\mu e ^{- |k|r}}{\eta_+^{l_0}};
\end{eqnarray*}
\item[\bf{(2)}] On $\hat{D}(s) \times {\tilde{\Lambda}}_+$,
\begin{eqnarray*}
|\partial_\lambda^l \partial_x^i \partial_{(y,z)}^j F|_{\hat{D}(s) \times {\tilde{\Lambda}}_+} \leq c\frac{\delta s^{a_j} \mu \Gamma(r -r_+)}{\eta_+^{l_0}},~~ |i|< l_0, ~ |j| \leq 2, |l|< l_0.
\end{eqnarray*}
\end{itemize}
\end{lemma}

\begin{proof}
Denote $\omega = \omega(\lambda)$ for $\lambda\in \Lambda$ and $\omega_0 = \omega (\lambda) $ for $\lambda\in \tilde{\Lambda}$. Recall $\eta_+ = \mu^{\frac{1}{6 l_0^2}}.$ For any $\lambda \in \tilde{\Lambda}_+$, $0<|k|\leq K_+$, with assumption $\bf{(H4)}$ we have
\begin{eqnarray}
\nonumber | L_{k0}|_{D(s)\times\tilde{\Lambda}_+} &=& |\sqrt{-1}\langle k, \omega\rangle + \sqrt{-1}\langle k, \Delta\rangle + \sqrt{-1}\langle k, \omega- \omega_0\rangle|\\
\nonumber &\geq& \frac{\gamma}{|k|^\tau} - c \max \{s, \mu^{\frac{1}{6l_0^2}}\} \delta K_+\\
\label{Eq1}& \geq& \frac{\gamma}{2 |k|^\tau},
\end{eqnarray}
and $|\partial_\lambda^l \partial_y^\imath \partial_z^\jmath L_{k0}|_{D(s)\times\tilde{\Lambda}_+} \leq c |k|. $ Applying the above and the following inequalities
\begin{eqnarray*}
|\partial^{l} L_{k0}^{-1}| \leq |L_{k0}^{-1}| \sum_{|l'| = 1}^{|l|} \left(
                                                                       \begin{array}{c}
                                                                         l \\
                                                                          l'\\
                                                                       \end{array}
                                                                     \right)
|\partial^{l - l'} L_{k0}^{-1}| |\partial^{l'} L_{k0}|,
\end{eqnarray*}
inductively, we deduce that
\begin{eqnarray}
\nonumber |\partial_\lambda^l \partial_y^\imath \partial_z^\jmath  L_{k0}^{-1}|_{D(s)\times\tilde{\Lambda}_+} &\leq& c |k|^{|l|+ |\imath|+ |\jmath|} |L_{k0}^{-1}|^{|l|+ |\imath|+ |\jmath|+1}\\
\label{N3} &\leq& \frac{|k|^{(|l|+ |\imath|+ |\jmath| +1)\tau + |l|+ |\imath|+ |\jmath|}}{\gamma^{|l|+ |\imath|+ |\jmath|+1}}.
\end{eqnarray}
It follows from $(\ref{5Eq7})$, $(\ref{N3})$ and Cauchy's estimate that
\begin{eqnarray}
\nonumber |\partial_\lambda^l \partial_y^\imath \partial_z^\jmath f_{k00}|_{D(s)\times {\tilde{\Lambda}}_+} &\leq& \delta |\partial_\lambda^l \partial_y^\imath \partial_z^\jmath (L_k^{-1} p_{k00})|_{D(s)\times {\tilde{\Lambda}}_+}\\
\nonumber &\leq&\frac{\delta |k|^{\chi_1}}{\gamma^{|l|+|\imath|+|\jmath|+1}} \frac{\gamma^{l_0 +9} s^2 \mu e^{-|k| r}}{\eta_+^{l_0}}\\
\label{EQ11} &\leq& c\frac{\delta s^2 \mu |k|^{\chi_1 } e^{- |k|r}}{\eta_+^{l_0}}.
\end{eqnarray}

Recall $L_{k1} = \breve{L}_{k1} + \sqrt{-1} \langle k, \Delta_1\rangle I_{m+ 2m_0}.$ Then, according to the basic property of Hermitian matrix (\cite{Horn}), on $D(s)\times {\tilde{\Lambda}}_+$,
\begin{eqnarray}
\nonumber L_{k1}^* L_{k1} &=& \breve{L}_{k1}^* \breve{L}_{k1}+ \sqrt{-1} \langle k, \Delta_1\rangle \big( \breve{L}_{k1}^* - \breve{L}_{k1} + (\sqrt{-1} \langle k, \Delta_1\rangle I_{m+ 2m_0})^*\big)~~~~~\\
\nonumber &\geq& \frac{\gamma}{|k|^{\tau}} I_{m + 2m_0} - c \max \{s, \mu^{\frac{1}{6l_0^2}}\}  K_+ I_{m + 2m_0}\\
&\geq& \frac{\gamma}{2 |k|^{\tau}} I_{m + 2m_0}.
\end{eqnarray}
Therefore,
\begin{eqnarray*}
|\det L_{k1}^* L_{k1}|_{D(s)\times \tilde{\Lambda}_+} &=&  |\overline{\det  L_{k1}} \det  L_{k1}|_{D(s)\times \tilde{\Lambda}_+}\\
 &=& (|\det  L_{k1}|_{D(s)\times \tilde{\Lambda}_+})^2\\
 &\geq& (\frac{\gamma}{2|k|^\tau})^{m + 2m_0}.
\end{eqnarray*}
 Inductively,
\begin{eqnarray*}
|\partial_\lambda^l \partial_y^ \imath \partial_z^ \jmath (\frac{1}{\det {L}_{k1}})|_{D(s)\times \tilde{\Lambda}_+} &\leq& |k|^{(m + 2m_0) (|l| + |\imath| + |\jmath|)} |\frac{1}{\det L_{k1}}|^{|l| + |\imath|+ |\jmath|+1}\\
&\leq& \frac{|k|^{(m + 2m_0)(|l| + |\imath| + |\jmath|) + \tau (m + 2m_0)(|l| + |\imath| + |\jmath| + 1)}}{\gamma ^{(m + 2m_0)(|l| + |\imath| + |\jmath|+1)}}.
\end{eqnarray*}
Hence
\begin{eqnarray*}
&~&|\partial_\lambda^ l \partial_y^\imath \partial_z^\jmath \left(
                                                          \begin{array}{c}
                                                            f_{k10} \\
                                                            f_{k01} \\
                                                          \end{array}
                                                        \right)|_{D(s)\times \tilde{\Lambda}_+}\\ &=&  |\partial_\lambda^ l \partial_y^\imath \partial_z^\jmath \big(L_{k1}^{-1} ( \left(
                                                                                                                                                    \begin{array}{c}
                                                                                                                                                      p_{k10} \\
                                                                                                                                                      p_{k01} \\
                                                                                                                                                    \end{array}
                                                                                                                                                  \right) + \delta \left(
                                                                                                                                                                     \begin{array}{c}
                                                                                                                                                                       M_{12}J \partial_z f_{k00} \\
                                                                                                                                                                       M_{22}J \partial_z f_{k00} \\
                                                                                                                                                                     \end{array}
                                                                                                                                                                   \right)
                                                        )\big)|_{D(s)\times \tilde{\Lambda}_+}\\
&=&  |\partial_\lambda^ l \partial_y^\imath \partial_z^\jmath \big( \frac{adj L_{k1}}{\det L_{k1}} ( \left(
                                                                                                                                                    \begin{array}{c}
                                                                                                                                                      p_{k10} \\
                                                                                                                                                      p_{k01} \\
                                                                                                                                                    \end{array}
                                                                                                                                                  \right) + \delta \left(
                                                                                                                                                                     \begin{array}{c}
                                                                                                                                                                       M_{12}J \partial_z f_{k00} \\
                                                                                                                                                                       M_{22}J \partial_z f_{k00} \\
                                                                                                                                                                     \end{array}
                                                                                                                                                                   \right)
                                                        )\big)|_{D(s)\times \tilde{\Lambda}_+}\\
&\leq& \frac{|k|^{\chi_1}}{\gamma^{(m + 2m_0)(|l| + |\imath| + |\jmath| +1)}} \big( \frac{\delta \gamma^{l_0 + 9} s \mu e^{-|k|r}}{\eta^{l_0}} + \frac{s^2 \mu |k|^{\chi_1} e^{-|k|r}}{\eta^{l_0}}\big)\\
&\leq& \frac{\delta s \mu |k|^{3\chi_1} e^{-|k|r}}{\eta_+^{l_0}}.
\end{eqnarray*}

Similarly, on $D(s)\times {\tilde{\Lambda}}_+$,
\begin{eqnarray}
\nonumber L_{k2}^* L_{k2} &=& \breve{L}_{k2}^* \breve{L}_{k2}+ \sqrt{-1} \langle k, \Delta\rangle \big( \breve{L}_{k2}^* - \breve{L}_{k2} + (\sqrt{-1} \langle k, \Delta\rangle I_{m^2+ 2m m_0 + 4m_0^2})^*\big)\\
&\geq& \frac{\gamma}{2 |k|^{\tau}} I_{m^2 + 2m m_0 + 4m_0^2}.
\end{eqnarray}
Hence
\begin{eqnarray*}
&~&|\partial_\lambda^ l \partial_y^\imath \partial_z^\jmath \left(
                                                          \begin{array}{c}
                                                            f_{k20} \\
                                                            f_{k11} \\
                                                            f_{k02} \\
                                                          \end{array}
                                                        \right)|_{D(s)\times \tilde{\Lambda}_+}\\ &=&  |\partial_\lambda^ l \partial_y^\imath \partial_z^\jmath \left(
                                                                                                                                  \begin{array}{c}
                                                                                                                                    T(f_{k20}) \\
                                                                                                                                    T(f_{k11}) \\
                                                                                                                                    T(f_{k02}) \\
                                                                                                                                  \end{array}
                                                                                                                                \right)
                                                        |_{D(s)\times \tilde{\Lambda}_+}\\
&=&  |\partial_\lambda^ l \partial_y^\imath \partial_z^\jmath \big( \frac{adj L_{k2}}{\det L_{k2}} ( \left(
                                                                                                                                                    \begin{array}{c}
                                                                                                                                                     T(p_{k20}) \\
                                                                                                                                                     T(p_{k11}) \\
                                                                                                                                                     T(p_{k02}) \\
                                                                                                                                                    \end{array}
                                                                                                                                                  \right) \\
                                                                                                                                                  &~&+ \delta  \left(
                                                                                                                                                                     \begin{array}{c}
                                                                                                                                                                       T(M_{12}J \partial_z (f_{k10})^T) \\
                                                                                                                                                                       T(M_{12} J \partial_z (f_{k01})^T+ (M_{22}J\partial_z(f_{k00})^T)^T)\\
                                                                                                                                                                       T(M_{22}J \partial_z (f_{k01})^T) \\
                                                                                                                                                                     \end{array}
                                                                                                                                                                   \right)
                                                        )\big)|_{D(s)\times \tilde{\Lambda}_+}\\
&\leq& \frac{|k|^{\chi_1}}{\gamma^{(m^2 + 2m m_0+ 4m_0^2)(|l| + |\imath| + |\jmath| +1)}} \big( \frac{\delta \gamma^{l_0 + 9} \mu e^{-|k|r}}{\eta^{l_0}} + \frac{ \mu |k|^{\chi_1} e^{-|k|r}}{\eta^{l_0}}\big)\\
&\leq& \frac{\delta \mu |k|^{3\chi_1} e^{-|k|r}}{\eta_+^{l_0}}.
\end{eqnarray*}
Now, we finish the proof of part $\bf{(1)}$.

For part $\bf{(2)}$, by part $\bf{(1)}$ and directly differentiating to $(\ref{709})$, we have, on $\hat{D}(s) \times {{\tilde{\Lambda}}}_+$,
\begin{eqnarray*}
|\partial_\lambda^l\partial_x^i \partial_{(y,z)}^j F|_{\hat{D}(s) \times {{\tilde{\Lambda}}}_+}&\leq &\sum\limits_{ 0<|k|\leq K_+} |k|^{|i|} \big(|\partial_\lambda^l\partial_{(y,z)}^j f_{k00}|+ |\partial_\lambda^l \partial_{(y,z)}^j f_{k10}|s^{1-sgn|j|}\\
&~&+ |\partial_\lambda^l\partial_{(y,z)}^j f_{k01}|s^{1-sgn|j|}+|\partial_\lambda^l\partial_{(y,z)}^j f_{k20}|s^{1-sgn(|j|-1)}\\
&~&~~+|\partial_\lambda^l\partial_{(y,z)}^j f_{k02}|s^{1-sgn(|j|-1)}\\
&~& +|\partial_\lambda^l\partial_{(y,z)}^j f_{k11}|s^{1-sgn(|j|-1)}\big) e^{-|k|(r_+ + \frac{7}{8} (r - r_+))}\\
&\leq& c\frac{\delta \mu s^{a_j}}{\eta_+^{l_0}} \sum\limits_{0< |k|\leq K_+} |k|^{3\chi_1} e^{- \frac{|k|( r - r_+)}{8}}\\
&=& c \frac{\delta  \mu s^{a_j} \Gamma(r - r_+)}{\eta_+^{l_0}}.
\end{eqnarray*}
\end{proof}

Similar to $\bf{Lemma~ 3.6}$ of \cite{Li}, here, $F$ can also be smoothly extended to functions of H\"{o}lder class $C^{l_0 + \sigma_0 + 1, l_0 -1 + \sigma_0} (\hat{D}(s_0) \times \tilde{\Lambda}_0)$, where $0<\sigma_0< 1$ is fixed. Moreover, there is a constant $c$ such that
\begin{eqnarray*}
|F|_{C^{l_0 + \sigma_0 + 1, l_0 -1 + \sigma_0} (\hat{D}(\beta_0) \times \tilde{\Lambda}_0)} \leq c\delta \mu \Gamma(r - r_+).
\end{eqnarray*}

\begin{lemma}
Assume
\begin{itemize}
\item[\bf{(H5)}] $c \mu \Gamma( r- r_+) < \frac{1}{8} (r -r_+),$
\item[\bf{(H6)}] $c \mu \Gamma ( r- r_+) < \frac{1}{8}\alpha.$
\end{itemize}
Then the following hold$:$
\begin{itemize}
\item[\bf{1)}] For all $0\leq t\leq1$,
\begin{eqnarray}\label{712}
\phi_F^t&:& D_{\frac{1}{4}\alpha} \rightarrow D_{\frac{1}{2}\alpha},\\
\phi&:&  D_{\frac{1}{8}\alpha} \rightarrow D_{\frac{1}{4}\alpha}
\end{eqnarray}
are well defined, real analytic and depend smoothly on $\lambda \in {\Lambda}_+$$;$
\item[\bf{2)}]There is a constant $c$ such that for all $0\leq t\leq 1$, $|l|\leq l_0$, $|j|\leq2$, $|i|\leq l_0$,
\begin{eqnarray*}
|\partial_\lambda^l\partial_x^i \partial_{(y,z)}^j( \phi_F^t\circ \phi - id )|_{D_{\frac{1}{4} \alpha}\times {\tilde{\Lambda}}_+} \leq c\frac{ \mu \Gamma(r - r_+)}{\eta_+^{l_0}}.
\end{eqnarray*}
\end{itemize}
\end{lemma}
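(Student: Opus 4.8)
The plan is to derive the flow estimates for $\phi_F^t$ and the translation $\phi$ from the bounds on $F$ established in the previous lemma, using standard estimates for Hamiltonian flows together with the smallness hypotheses $\mathbf{(H5)}$ and $\mathbf{(H6)}$. First I would record that the vector field $X_F = J\nabla F$ satisfies, on $\hat D(\alpha s)\times\Lambda_+$, the bound $|X_F| \le c\,\mu^{\sigma}\Gamma(r-r_+)\cdot(\text{scale factors})$, which follows by dividing the part $\mathbf{(2)}$ estimate $|\partial_\lambda^l\partial_x^i\partial_{(y,z)}^j F|\le c\delta s^{a_j}\mu^{d_j}\Gamma(r-r_+)$ by the appropriate powers of the domain radii: the $x$-component of $X_F$ is $-\partial_{(y,z)}F$, which on the $z,y$-balls of radius $\alpha s$ loses at most a factor $(\alpha s)^{-1}$ against $s^{a_j}$, and the $(y,z)$-component is $\partial_x F$, which is $O(\delta s^{a_j}\mu^{d_j}\Gamma)$ directly. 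In both cases $\mathbf{(H5)}$ (for the $x$-direction, measuring the escape of $\mathrm{Im}\,x$) and $\mathbf{(H6)}$ (for the $(y,z)$-direction, measuring the escape of $|y|,|z|$ relative to $\alpha s$) guarantee the displacement over time $t\in[0,1]$ is less than $\tfrac18(r-r_+)$ and $\tfrac18\alpha s$ respectively, so the flow does not leave the larger domain.

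Second I would prove part $\mathbf{1)}$: a standard contraction/continuation argument shows $\phi_F^t$ maps $D_{\frac14\alpha}$ into $D_{\frac12\alpha}$ for $0\le t\le1$, because starting in $D_{\frac14\alpha} = D(r_++\tfrac38(r-r_+),\tfrac14\alpha s)$ and moving a distance at most $\tfrac18(r-r_+)$ in $\mathrm{Im}\,x$ and at most $\tfrac18\alpha s$ in $(y,z)$ keeps the point inside $D(r_++\tfrac12(r-r_+),\tfrac12\alpha s)=D_{\frac12\alpha}$; real analyticity and smooth $\lambda$-dependence are inherited from $F$, which is real analytic in $x$ and $C^{l_0}$ in $\lambda$ (indeed, by the remark following the previous lemma, $F$ has a $C^{l_0+\sigma_0+1,\,l_0-1+\sigma_0}$ Whitney extension). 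The translation $\phi$ of $(\ref{E_21})$ shifts $(y,z)$ by $(y_0,z_0)$, and by $(\ref{E_20})$ we have $|(y_0,z_0)|\le c\gamma^b s\mu \le \tfrac18\alpha s$ once $\mu$ is small (since $\alpha=\mu^{1/3}$), so $\phi: D_{\frac18\alpha}\to D_{\frac14\alpha}$; composing gives $\phi_F^t\circ\phi : D_{\frac18\alpha}\to D_{\frac12\alpha}$ as needed for subsequent steps.

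Third, for part $\mathbf{2)}$ I would estimate $\phi_F^t - \mathrm{id}$ via the integral equation $\phi_F^t(w) = w + \int_0^t X_F(\phi_F^\tau(w))\,d\tau$, differentiating under the integral and using a Gronwall/induction argument on the multi-index order $|l|+|i|+|j|$. For $|i|+|j|=0$ and $|l|\ge1$: only $\lambda$-derivatives act, and the $(y,z)$-shift part of $X_F$ carries the factor $s^{a_j}=s$ (since $j=0$ means $a_j\in\{2\}$, but the relevant displacement in the $x$-coordinate is the $\partial_{(y,z)}F$ piece, scaled to give $s\mu^{d_j}\Gamma$), yielding $c\,s\,\mu^{d_j}\Gamma(r-r_+)$; for $2\le|l|+|i|+|j|\le l_0+2$, each additional $x$- or $(y,z)$-derivative applied to $X_F$ against the radii $\tfrac18(r-r_+)$, $\tfrac18\alpha s$ costs a bounded Cauchy factor absorbed into $c$, while one power of $s$ is consumed, giving $c\,\mu^{d_j}\Gamma(r-r_+)$; the remaining low-order cases ($|l|+|i|+|j|=1$ with $|i|+|j|\ge1$, or the like) are simply $O(1)$. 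The composition with $\phi$ only adds the constant shift $(y_0,z_0)$, whose $\lambda$-derivatives are controlled by $(\ref{E_20})$, so it does not spoil any of the three regimes.

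The main obstacle I anticipate is bookkeeping the powers of $s$, $\mu$, $\alpha$ and the Cauchy factors $(r-r_+)^{-1}$ correctly across the three cases in part $\mathbf{2)}$ — in particular ensuring the exponent $d_j$ (rather than $d_j$ shifted) survives the differentiation and that the $|i|+|j|=0,\,|l|\ge1$ regime genuinely gains the extra power of $s$; this is where hypotheses $\mathbf{(H4)}$–$\mathbf{(H6)}$ must be invoked with care, and where one would, in a full write-up, cite the analogous computation (e.g. $\mathbf{Lemma~3.7}$ of \cite{Li}) rather than reproduce every Gronwall estimate.
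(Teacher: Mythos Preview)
Your proposal is correct and aligns with the paper's approach: the paper itself gives no independent proof but simply states that the lemma follows from $\mathbf{Lemma~3.7}$ of \cite{Li} and omits all details. What you have sketched---bounding the Hamiltonian vector field $X_F$ from the estimates on $F$, using $\mathbf{(H5)}$ and $\mathbf{(H6)}$ to confine the flow, controlling the translation $\phi$ via $(\ref{E_20})$, and then running a Gronwall/induction on the multi-index order for the derivative estimates---is precisely the content of that cited lemma, so you have in effect supplied the details the authors chose to suppress.
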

\begin{proof}

Let $\phi_F^t  = (\phi_1^t,\phi_2^t,\phi_3^t )^T$, where $\phi_1^t$, $\phi_2^t$ and $\phi_3^t$ are components of $\phi_F^t$ in $x-$, $y-$ and $z-$coordinate, respectively. Obviously, $\phi_F^t = id + \int_0^t X_F \circ \phi_F^s ds,$ where $X_F = (\partial_y F, -\partial_x F, J \partial_z F)^T$. Let $(x,y,z)$ be any point in $D_{\frac{\alpha}{4}}$ and let $t_* =\sup \{t\in [0,1]: \phi_F^t(x,y,z)\in D_{\alpha}\}.$ Then, for $t\in [0, t_*]$, $\lambda\in \Lambda_+$, with $\bf{(H5)}$ and $\bf{(H6)}$,
\begin{eqnarray*}
|\phi_1^t(x,y,z) - x|_{D_{\frac{\alpha}{4}}} &\leq& \int_0^t |F_y \circ \phi_F^s|_{D_{\alpha}} ds \leq |F_y|_{\hat{D}(s)} \leq \delta \mu \Gamma \leq \frac{1}{8} (r-r_+),\\
|\phi_2^t(x,y,z) - y|_{D_{\frac{\alpha}{4}}} &\leq& \int_0^t |F_x \circ \phi_F^s|_{D_{\alpha}} ds \leq |F_x|_{\hat{D}(s)} \leq \delta \mu s^2 \Gamma \leq \frac{\alpha s}{8},\\
|\phi_3^t(x,y,z) - z|_{D_{\frac{\alpha}{4}}} &\leq& \int_0^t |F_z \circ \phi_F^s|_{D_{\alpha}} ds \leq |F_z|_{\hat{D}(s)} \leq \delta \mu s \Gamma \leq \frac{\alpha s}{8},\\
\end{eqnarray*}
which implies $|\phi_1^t(x,y,z)|< r_+ + \frac{3}{8}(r- r_+)$, $|\phi_2^t(x,y,z)|<\frac{\alpha s}{2}$, $|\phi_3^t(x,y,z)|<\frac{\alpha s}{2}$, i.e. $\phi_F^t (x,y,z)\in D_{\frac{\alpha}{2}}$. Using $(\ref{E_20})$ and $\bf{(H6)}$, $\phi: D_{\frac{1}{8}\alpha} \rightarrow D_{\frac{1}{4}\alpha}$ is obvious.

The proof of $\bf{2)}$ follows from Lemma \ref{Lema1}.

\end{proof}

\subsubsection{New perturbation}

Here we will estimate the new perturbation $P_+$ on the domain $D_+ \times \Lambda_+$, where $D_+ = D_{\frac{\alpha}{8}}$.

\begin{lemma}
Assume
 \begin{itemize}
\item[\bf{(H7)}] $\mu^{\frac{1}{12}} \Gamma^3 (r - r_+) \leq \gamma_+^{l_0+ 9}$.
\end{itemize}
Then
\begin{eqnarray*}
|\partial_ \lambda^ l  P_+|_{D_+ \times {\tilde{\Lambda}}_+}\leq c\frac{\delta \gamma_+^{l_0 + 9} s_+^{2} \mu_+}{\eta_+^{l_0}}.
\end{eqnarray*}
\end{lemma}

\begin{proof}
Directly,
\begin{eqnarray*}
| R' |_{D_{\frac{\alpha}{4}} \times {\Lambda}_+} &\leq& c\delta^2 s^{3} \mu \Gamma(r -r_+).
\end{eqnarray*}
Denote $\partial^{i,j} = \partial_x^i\partial_{(y,z)}^j$ for $|j| \leq 2$, $|i| \leq l_0$. Then
\begin{eqnarray*}
|\partial^{i,j}( \int_0^1 \{R_t, F\} \circ \phi_F^t dt \circ \phi)|_{D_{\frac{\alpha}{4}} \times {{\Lambda}}_+} &\leq& c\delta s^{a_j} \mu^2 \Gamma^3(r - r_+),\\
|\partial^{i,j} ( P- R)\circ \phi_F^1\circ\phi|_{D_{\frac{\alpha}{4}} \times {{{\Lambda}}}_+} &\leq& c\delta \gamma^{l_0 + 9}s^{a_j} \mu^{2}\Gamma(r -r_+),\\
|\partial^{i,j} R' \circ \phi|_{D_{\frac{\alpha}{4}} \times {{\Lambda}}_+} &\leq& c\delta^2 s^{a_j+1} \mu \Gamma(r -r_+),\\
|\partial^{i,j} \langle \left(
                            \begin{array}{c}
                              y \\
                              z \\
                            \end{array}
                          \right), \left(
                                     \begin{array}{cc}
                                       p_{020} & \frac{1}{2}p_{011} \\
                                       \frac{1}{2}p_{011}^T & p_{002} \\
                                     \end{array}
                                   \right)\left(
                            \begin{array}{c}
                              y_0 \\
                              z_0 \\
                            \end{array}
                          \right)
\rangle|_{D_{\frac{\alpha}{8}} \times {\Lambda}_+} &\leq& c \delta \gamma^{l_0+9} s^{a_j} \mu^2.
\end{eqnarray*}
Further, by $(\ref{N5})$, we have
\begin{eqnarray*}
|\partial_\lambda^l  P_+|_{D_+ \times {\tilde{\Lambda}}_+} \leq c\frac{\delta s^{2} \mu^{2}\Gamma^3(r - r_+)}{\eta_+^{l_0}}.
\end{eqnarray*}
Here we use the fact that $s = c  \mu \mu_0^{-\frac{3}{4}} s_0$ and $\delta \mu_0^{-\frac{3}{4}} s_0 = o(c).$ (According to the construction of $s_\nu$ and $\mu_\nu$, obviously, $s = c  \mu \mu_0^{-\frac{3}{4}} s_0$.)
Using assumption $\bf{(H7)}$, we finish the proof of this lemma.

\end{proof}

\subsubsection{The preservation of frequencies}
Combining the argument in subsections \ref{Homological_equations} and \ref{Estimate}, if $M(\lambda)$ is nonsingular, there is a transformation $(\ref{E_21})$ such that all the frequencies are preserved after a KAM step. However, when $M(\lambda)$ is singular, $(\ref{E_1})$ is not solvable, i.e. there is no transformation such that all frequencies are preserved after a KAM step. To show the part preservation of frequency,  we give a simple property.

\begin{lemma}\label{Pro2}
For an $n\times n$ symmetrical matrix $A$ with $rank (A) = m$, there is an invertible matrix $T$ that corresponds to a linear transformation, under which only some rows of $A$ exchange, such that
          $$T^{-1} A T = \left(
                           \begin{array}{cc}
                             B & C \\
                             D & E \\
                           \end{array}
                         \right),
          $$
          where $B$ is an $m \times m$ nonsingular minor.
\end{lemma}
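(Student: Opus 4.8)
The plan is to prove Lemma \ref{Pro2} by a direct linear-algebra argument exploiting symmetry. Since $A$ is symmetric with $\operatorname{rank} A = m$, there exist $m$ rows of $A$ that are linearly independent and span the row space of $A$; say these are rows indexed by $I = \{i_1 < \cdots < i_m\}$. First I would let $T$ be the permutation matrix that reorders the standard basis so that the indices in $I$ are moved to the first $m$ positions (keeping the relative order of the remaining indices afterward). Because $T$ is a permutation matrix, $T^{-1} = T^T$, and conjugation $T^{-1} A T = T^T A T$ simultaneously permutes rows and the corresponding columns of $A$, so it preserves symmetry; write the result in block form
\begin{eqnarray*}
T^{-1} A T = \left(
\begin{array}{cc}
B & C \\
D & E \\
\end{array}
\right),
\end{eqnarray*}
where $B$ is the $m\times m$ principal submatrix of $A$ sitting on the rows and columns indexed by $I$.

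The key step is then to show $B$ is nonsingular. Here is where symmetry is essential: for a general (non-symmetric) matrix, picking $m$ independent rows does not force the corresponding $m\times m$ principal minor to be invertible, but for symmetric $A$ it does. I would argue as follows. Since rows $I$ span the row space of $A$, every row of $A$ is a linear combination of the rows in $I$; equivalently, in the reordered matrix $\begin{pmatrix} B & C \\ D & E\end{pmatrix}$ the bottom block $(D\ E)$ is a linear combination of the top block $(B\ C)$, i.e. there is a matrix $L$ with $(D\ E) = L(B\ C)$, so $D = LB$ and $E = LC$. By symmetry of the reordered matrix, $C = D^T = B^T L^T = B L^T$ (using $B = B^T$). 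Now suppose $B$ were singular; pick $0 \neq \xi \in \ker B$. Then the vector $\eta = \begin{pmatrix}\xi \\ 0\end{pmatrix}$ satisfies $(T^{-1}AT)\eta = \begin{pmatrix} B\xi \\ D\xi\end{pmatrix} = \begin{pmatrix} 0 \\ LB\xi\end{pmatrix} = 0$, so $\eta \in \ker(T^{-1}AT)$; hence the first $m$ columns of $T^{-1}AT$ — which are the columns indexed by $I$ — are linearly dependent. But conjugating by a permutation does not change the rank of the column space restricted to any fixed index set in a way that contradicts: the columns indexed by $I$ of $A$ are, by symmetry, the transposes of the rows indexed by $I$, which are linearly independent, a contradiction. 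Therefore $B$ is nonsingular.

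I expect the only real subtlety is making the last contradiction airtight, i.e. cleanly relating "rows in $I$ are independent" to "columns in $I$ are independent" via symmetry, and keeping track of how the permutation $T$ acts on both rows and columns. A slightly slicker alternative I would mention: since $A$ is symmetric, $\operatorname{rank} A = m$ means $A$ has an $m\times m$ nonsingular principal submatrix (a standard fact, e.g. because a symmetric matrix has a nonsingular principal submatrix of size equal to its rank — this follows from the spectral theorem or from the fact that $x^T A x$ has rank $m$ as a quadratic form and can be diagonalized on a coordinate subspace), and then $T$ is simply the permutation bringing that submatrix to the top-left corner; one must still check that "nonsingular principal submatrix of size $m$" is attainable, which is exactly the spectral/quadratic-form observation. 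Either route closes the proof in a few lines; I would present the first (elementary, permutation-based) version since it matches the hands-on style of the surrounding KAM estimates and makes explicit that $T$ "only exchanges some rows."
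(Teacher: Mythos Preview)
Your proposal is correct and follows essentially the same route as the paper: pick $m$ linearly independent rows of $A$, use the symmetry $a_i = b_i^T$ to conclude the corresponding columns are also independent, and conjugate by the permutation matrix $T$ bringing those indices to the front. Your argument for the nonsingularity of $B$ (via $D = LB$ and the contradiction $B\xi = 0 \Rightarrow (B;D)\xi = 0$) is in fact more explicit than the paper's, which simply asserts that after the simultaneous row/column permutation the top-left block is nonsingular without spelling out this step.
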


\begin{proof}
Rewrite
\begin{eqnarray*}
A = \left(
      \begin{array}{c}
        a_1 \\
        a_2 \\
        \vdots \\
        a_n \\
      \end{array}
    \right) = (b_1, b_2, \cdots, b_n),
\end{eqnarray*}
where $a_i$ is $i-$th row of $A$ and $b_i$ is $i-$th column of $A$, $i=1, \cdots, n.$
Since $A$ is symmetrical, $a_i = b_i^T$, $i = 1, \cdots, n$, which means that there is a same linear relation between $a_i$ and $b_i$, $i=1, \cdots, n$. Because $rank (A) = m$, there are $m$ linearly independent rows (columns) of $A$. Then there is an invertible matrix $T$, which corresponds to a linear transformation that exchange some rows of $A$, such that
\begin{eqnarray*}
T \left(
    \begin{array}{c}
      a_1 \\
      a_2 \\
      \vdots \\
      a_n \\
    \end{array}
  \right) = \left(
              \begin{array}{c}
                a_1^1 \\
                a_2^1 \\
                \vdots \\
                a_m^1  \\
                \vdots \\
                a_n^1  \\
              \end{array}
            \right),
\end{eqnarray*}
where $a_1^1, ~\cdots,~ a_m^1$ are linearly independent. Since $T^{-1} = T$ and $T^{-1}$ does not change the linear relation among $b_1$, $\cdots$, $b_m$, we get
\begin{eqnarray*}
T^{-1} A T &=& \left(
              \begin{array}{c}
                a_1^1 \\
                a_2^1 \\
                \vdots \\
                a_m^1  \\
                \vdots \\
                a_n^1  \\
              \end{array}
            \right) T\\
         &=& \left(
               \begin{array}{cc}
                 B & C \\
                 D & E \\
               \end{array}
             \right),
\end{eqnarray*}
where $B$ is an $m \times m$ nonsingular minor.

\end{proof}

Combining assumption $\bf{(A2)}$ and $\bf{Lemma~\ref{Pro2}}$, there is an invertible matrix $T$, which corresponds to a transformation only exchanging columns or rows, such that
\begin{eqnarray*}
T^{-1} \left(
    \begin{array}{cc}
      M_{11} & M_{12} \\
      M_{21} & M_{22} \\
    \end{array}
  \right) T = \left(
              \begin{array}{cc}
                C_{11} & C_{12} \\
                C_{21} & C_{22} \\
              \end{array}
            \right),
\end{eqnarray*}
where $(C_{11}, C_{12})_{(n+2m_0)\times(m + 2m_0)}$ is a matrix with $rank (C_{11}, C_{12}) = n + 2m_0$ and $(C_{21}, C_{22})_{(m-n)\times(m + 2m_0)}$ is the complements. Moreover, $(C_{11})_{(n+2m_0)\times (n+2m_0)}$ is nonsingular. Denote
$
\left(
         \begin{array}{c}
           y_1 \\
           y_2 \\
         \end{array}
       \right) = T^{-1} \left(
                          \begin{array}{c}
                           y_0 \\
                           z_0 \\
                          \end{array}
                        \right),
\left(
  \begin{array}{c}
    p_1 \\
    p_2 \\
  \end{array}
\right)= T^{-1} \left(
                                                 \begin{array}{c}
                                                   p_{010} \\
                                                   p_{001} \\
                                                 \end{array}
                                               \right)
,$
where $p_1, ~y_1 = (y_3, z_0)^T\in R^{n+2m_0}$, $y_2,$ $p_2$ $\in R^{m-n}$, $p_{010}, ~y_0 = (y_3, y_2)^T\in R^{m}$, $p_{001}, ~z_0\in R^{2m_0}$.  Then (\ref{E_1}) is changed to $:$
\begin{eqnarray}\label{933}
\frac{\delta}{2} \left(
                   \begin{array}{cc}
                     C_{11} & C_{12} \\
                     C_{21} & C_{22} \\
                   \end{array}
                 \right)
 \left(
                     \begin{array}{c}
                       y_1 \\
                       y_2 \\
                     \end{array}
                   \right)+ \delta \left(
                                     \begin{array}{c}
                                       \partial_{y_1} h(y_0, z_0, \lambda) \\
                                       \partial_{y_2} h(y_0, z_0, \lambda) \\
                                     \end{array}
                                   \right) = -\left(
                                                 \begin{array}{c}
                                                   p_1 \\
                                                   p_2 \\
                                                 \end{array}
                                               \right).
\end{eqnarray}
Since $ rank (C_{11}, C_{12}) =rank \left(
                                  \begin{array}{cc}
                                    C_{11} & C_{12} \\
                                    C_{21} & C_{22} \\
                                  \end{array}
                                \right),
$
there is an invertible matrix $T_1$ that only exchange columns or rows such that
$
T_1 \left(
                                  \begin{array}{cc}
                                    C_{11} & C_{12} \\
                                    C_{21} & C_{22} \\
                                  \end{array}
                                \right) = \left(
                                  \begin{array}{cc}
                                    C_{11} & C_{12} \\
                                    0 & 0 \\
                                  \end{array}
                                \right),
$
which is equivalent to the fact that the rows of $(C_{21}, C_{22})$ is linearly dependent on the rows of $(C_{11}, C_{12}).$
Obviously, $T_1$ is a matrix with the following form $\left(
                        \begin{array}{cc}
                          I & 0 \\
                          D_1 & I \\
                        \end{array}
                      \right),
$
where $D_1$ is determined by the linear relation among the rows of $(C_{21}, C_{22})$ and $(C_{11}, C_{12})$. Then
\begin{eqnarray*}
T_1 \left(
                                     \begin{array}{c}
                                       \partial_{y_1} h(y_0, z_0, \lambda) \\
                                       \partial_{y_2} h(y_0, z_0, \lambda) \\
                                     \end{array}
                                   \right) &=& \left(
                                     \begin{array}{c}
                                       \partial_{y_1} h(y_0, z_0, \lambda) \\
                                       D_1 \partial_{y_1} h(y_0, z_0, \lambda)+ \partial_{y_2} h (y_0, z_0, \lambda)\\
                                     \end{array}
                                   \right),\\
T_1 \left(
                                                 \begin{array}{c}
                                                   p_1 \\
                                                   p_2 \\
                                                 \end{array}
                                               \right) &=& \left(
                                                 \begin{array}{c}
                                                   p_1 \\
                                                   D_1 p_1 + p_2 \\
                                                 \end{array}
                                               \right).
\end{eqnarray*}
Consider the following equation
\begin{eqnarray}\label{934}
\frac{\delta}{2} \left(
                   \begin{array}{cc}
                     C_{11} & C_{12} \\
                     0 & 0 \\
                   \end{array}
                 \right)
 \left(
                     \begin{array}{c}
                       y_1 \\
                       y_2 \\
                     \end{array}
                   \right)+ \delta \left(
                                     \begin{array}{c}
                                       \partial_{y_1} h(y_0, z_0, \lambda) \\
                                       0 \\
                                     \end{array}
                                   \right) = -\left(
                                                 \begin{array}{c}
                                                   p_1 \\
                                                   0 \\
                                                 \end{array}
                                               \right),
\end{eqnarray}
where $C_{11}$ is nonsingular. Obviously, $(y_1, y_2)^T  = (y_1, 0)^T$ is a specific solution of $(\ref{934})$, i.e., with assumption $\bf{(A2)}$ there is a symplectic transformation such that part of the frequencies are preserved.
\begin{remark}
If $M$ is singular,  some of the frequencies are preserved and the others drift. Moreover, the drift depends on $D_1 p_1 + p_2$ and $D_1 \partial_{y_1} h(y_0, z_0, \lambda)+ \partial_{y_2} h (y_0, z_0, \lambda)$ and the estimate on drift is showed by \emph{(\ref{drift})}.
\end{remark}

Consider $:$
\begin{eqnarray}
\label{935}  \langle \omega, y_0\rangle + \frac{\delta}{2} \langle \left(
                                                                    \begin{array}{c}
                                                                      y_0 \\
                                                                      z_0 \\
                                                                    \end{array}
                                                                  \right), M \left(
                                                                               \begin{array}{c}
                                                                                 y_0 \\
                                                                                 z_0 \\
                                                                               \end{array}
                                                                             \right)
\rangle + p_{000}+ \langle \left(
                             \begin{array}{c}
                               p_{010} \\
                               p_{001} \\
                             \end{array}
                           \right), \left(
                                      \begin{array}{c}
                                        y_0 \\
                                        z_0 \\
                                      \end{array}
                                    \right)
\rangle&~&\\
\nonumber~~~+ \langle\left(
          \begin{array}{c}
            y_0 \\
            z_0 \\
          \end{array}
        \right), \left(
                   \begin{array}{cc}
                     p_{020} & \frac{1}{2}p_{011} \\
                     \frac{1}{2}p_{011}^T  & p_{002} \\
                   \end{array}
                 \right) \left(
                           \begin{array}{c}
                             y_0 \\
                             z_0 \\
                           \end{array}
                         \right)\rangle + \delta h(y_0, z_0, \lambda)&=& 0,\\
\label{936} \frac{\delta M}{2} \left(
                     \begin{array}{c}
                       y_0 \\
                       z_0 \\
                     \end{array}
                   \right)+ \delta \left(
                                     \begin{array}{c}
                                       \partial_y h(y_0, z_0, \lambda) \\
                                       \partial_z h(y_0, z_0, \lambda) \\
                                     \end{array}
                                   \right) + \left(
                                                 \begin{array}{c}
                                                   p_{010} \\
                                                   p_{001} \\
                                                 \end{array}
                                               \right) - t \left(
                                                                   \begin{array}{c}
                                                                     \omega \\
                                                                     0 \\
                                                                   \end{array}
                                                                 \right)
                                                &=& 0.~~~~~~~~
\end{eqnarray}

If $M$ is nonsingular, according to $\bf{(A3)}$ and the continuity of determinant, we have $\det \left(
  \begin{array}{cc}
    M & \bar{\omega}_1 \\
    \bar{\omega}_2 & 0 \\
  \end{array}
\right) \neq 0,$ where $\omega_1 = (\omega, 0)^T \in R^{n+ 2m_0}$, $\omega_2 = (p_{010}+ \omega, p_{001})$. Then, combining $(\ref{935})$ and $(\ref{936})$, with implicit theorem we get $(y_0, z_0, t),$ i.e., we construct a transformation such that on the same energy surface the ratios of the frequencies are preserved after a KAM step.
 \begin{remark}
 If $M$ is nonsingular, the condition $\det \left(
  \begin{array}{cc}
    M & \bar{\omega}_1 \\
    \bar{\omega}_1 & 0 \\
  \end{array}
\right) \neq 0$ is a generalization of the isoenergetically nondegenerate condition given by V. I. Arnold (\cite{Arnold1}) to the persistence of lower dimensional invariant tori on a given energy surface, where $\omega_1 = (\omega, 0)^T $.
 \end{remark}

Assume $M$ is singular and conditions $\bf{(A2)}$ and $\bf{(A3)}$ hold. Denote $\tilde{\omega}_1$ by the first $n+ 2m_0$ components of $T_1T^{-1} (\omega, 0)^T$, which is equal to the first $n+ 2m_0$ components of $T^{-1} (\omega, 0)^T$. In fact, $$T_1T^{-1} \left(
                                                                                                                             \begin{array}{c}
                                                                                                                               \omega \\
                                                                                                                               0 \\
                                                                                                                             \end{array}
                                                                                                                           \right)
= T_1 \left(
                                                                                                                             \begin{array}{c}
                                                                                                                               \tilde{\omega}_1 \\
                                                                                                                               \omega_4 \\
                                                                                                                             \end{array}
                                                                                                                           \right)=\left(
                                                                                                                                     \begin{array}{cc}
                                                                                                                                       I & 0 \\
                                                                                                                                       D_1 & I \\
                                                                                                                                     \end{array}
                                                                                                                                   \right)\left(
                                                                                                                             \begin{array}{c}
                                                                                                                               \tilde{\omega}_1 \\
                                                                                                                               \omega_4 \\
                                                                                                                             \end{array}
                                                                                                                           \right)=
                                                                                                                           \left(
                                                                                                                             \begin{array}{c}
                                                                                                                               \tilde{\omega}_1 \\
                                                                                                                              D_1\tilde{\omega}_1+ \omega_4 \\
                                                                                                                             \end{array}
                                                                                                                           \right),
                                                                                                                           $$
where $\omega = (\omega_3, \omega_4)^T\in R^{m},$ $\tilde{\omega}_1 = (\omega_3, 0)^T\in R^{n+ 2m_0}.$ Similarly, combining $(\ref{934})$, we have
\begin{eqnarray*}
\frac{\delta}{2} \left(
                   \begin{array}{cc}
                     C_{11} & C_{12} \\
                     0 & 0 \\
                   \end{array}
                 \right)
 \left(
                     \begin{array}{c}
                       y_1 \\
                       y_2 \\
                     \end{array}
                   \right)+ \delta \left(
                                     \begin{array}{c}
                                       \partial_{y_1} h(y_0, z_0, \lambda) \\
                                       0 \\
                                     \end{array}
                                   \right)- t\left(
                                               \begin{array}{c}
                                                 \tilde{\omega}_1 \\
                                                 0 \\
                                               \end{array}
                                             \right)
 = -\left(
                                                 \begin{array}{c}
                                                   p_1 \\
                                                   0 \\
                                                 \end{array}
                                               \right).
\end{eqnarray*}
Assume
\begin{eqnarray}\label{937}
det \left(
  \begin{array}{cc}
   C_{11} & \tilde{\omega}_1 \\
    \tilde{\omega}_2 & 0 \\
  \end{array}
\right) \neq 0,
\end{eqnarray}
where $\tilde{\omega}_2 $ is the first $n+ 2m_0$ components of $(p_{010}+ \omega, p_{001}) T$.
Then there is a $(y_{i_1}^0, \cdots, y_{i_n}^0, 0, \cdots, 0, z_1^0, \cdots, z_{2m_0}^0, t)$ such that
\begin{eqnarray*}
\langle \omega, y_0\rangle + \frac{\delta}{2} \langle \left(
                                                                    \begin{array}{c}
                                                                      y_0 \\
                                                                      z_0 \\
                                                                    \end{array}
                                                                  \right), M \left(
                                                                               \begin{array}{c}
                                                                                 y_0 \\
                                                                                 z_0 \\
                                                                               \end{array}
                                                                             \right)
\rangle + p_{000}+ \langle \left(
                             \begin{array}{c}
                               p_{010} \\
                               p_{001} \\
                             \end{array}
                           \right), \left(
                                      \begin{array}{c}
                                        y_0 \\
                                        z_0 \\
                                      \end{array}
                                    \right)
\rangle&~&\\
\nonumber~~~+ \langle\left(
          \begin{array}{c}
            y_0 \\
            z_0 \\
          \end{array}
        \right), \left(
                   \begin{array}{cc}
                     p_{020} & \frac{1}{2}p_{011} \\
                     \frac{1}{2}p_{011}^T  & p_{002} \\
                   \end{array}
                 \right) \left(
                           \begin{array}{c}
                             y_0 \\
                             z_0 \\
                           \end{array}
                         \right)\rangle + \delta h(y_0, z_0, \lambda)&=& 0,\\
\frac{\delta}{2} \left(
                   \begin{array}{cc}
                     C_{11} & C_{12} \\
                     0 & 0 \\
                   \end{array}
                 \right)
 \left(
                     \begin{array}{c}
                       y_1 \\
                       y_2 \\
                     \end{array}
                   \right)+ \delta \left(
                                     \begin{array}{c}
                                       \partial_{y_1} h(y_0, z_0, \lambda) \\
                                       0 \\
                                     \end{array}
                                   \right)- t\left(
                                               \begin{array}{c}
                                                 \tilde{\omega}_1 \\
                                                 0 \\
                                               \end{array}
                                             \right) &=& -\left(
                                                 \begin{array}{c}
                                                   p_1 \\
                                                   0 \\
                                                 \end{array}
                                               \right).
\end{eqnarray*}
Finally, combining $\bf{(A2)}$, $\bf{(A3)}$, $\bf{Property ~\ref{Pro2}}$ and the continuity of determinant, assumption $(\ref{937})$ holds. Therefore, on a given energy surface there is a transformation such that ratios of frequencies between the unperturbed torus and the perturbed are preserved.
\begin{remark}
Assume $\bf{(A2)}$ and $\bf{(A3)}$. For a given energy, $n$ coordinates of the frequency $\omega_{+}$ coincide with $n$ coordinates of $t \omega$, where $t\rightarrow 0$ as $\varepsilon \rightarrow 0$. Simultaneously, the other frequencies slightly drift and the drift depend on $D_1 p_1 + p_2$ and $D_1 \partial_{y_1} h(y_0, z_0, \lambda)+ \partial_{y_2} h (y_0, z_0, \lambda)$.
\end{remark}

\subsection{Iteration Lemma}\label{727}
Let $r_0$, $\gamma_0$, $s_0$, $\eta_0$, $\Lambda_0$, $H_0$, $N_0$, $e_0$, $P_0$ be given as above and denote $\hat{D}_0 = D(r_0, \beta_0)$. For any $\nu = 0,1, \cdots,$  denote
\begin{eqnarray*}
r_\nu &=& r_0 (1 - \sum_{i=1}^\nu \frac{1}{2^{i+1}}), ~~~\gamma_\nu = \gamma_0 (1 - \sum_{i=1}^\nu \frac{1}{2^{i+1}}),~~~\alpha_\nu = \mu_\nu^{\frac{1}{3}}\\
\eta_\nu &=&\mu_\nu^{\frac{1}{6 l_0^2}},~~~\mu_\nu = 64c_0 \mu_{\nu-1}^{\frac{13}{12}}, ~~~K_\nu = ([\log\frac{1}{\mu_{\nu-1}}]+1)^{3 {\eta}},\\
D_\nu &=& D(r_\nu, s_\nu),~~~~~~~\hat{D}_\nu = D(r_\nu+ \frac{7}{8} (r_{\nu-1} - r_\nu)),~~~s_\nu = \frac{1}{8} \alpha_{\nu-1} s_{\nu-1},\\
\Lambda_\nu &=& \{\lambda\in \Lambda_{\nu-1}: |\breve{L}_{k0, \nu}| > \frac{\gamma_\nu}{|k|^\tau}, \breve{L}_{k1, \nu}^* \breve{L}_{k1, \nu} > \frac{\gamma_\nu}{|k|^\tau}  I_{m + 2m_0},  \\
&~&\breve{L}_{k2, \nu}^* \breve{L}_{k2, \nu} > \frac{\gamma_\nu}{|k|^\tau}  I_{m^2 + 2m m_0+ 4m_0^2},  ~for~all~0< |k|\leq K_\nu\},\\
\tilde{\Lambda}_\nu &=& \{\lambda\in \mathds{C}^m, |\lambda - \Lambda_\nu| \leq 4\eta_\nu\}.
\end{eqnarray*}

We have the following Iteration Lemma.
\begin{lemma}\label{balala}
Assume $(\ref{714})$ hold. Then the KAM step described in Section $\ref{KAM}$ is valid for all $\nu = 0,1,\cdots$, and the following facts hold for all $\nu = 1,2,\cdots.$
 \begin{itemize}
\item[\bf{(1)}] $P_\nu$ is real analytic in $(x,y,z)\in D_\nu$, smooth in $(x,y,z)\in \hat{D}_\nu$ and smooth in $\lambda \in {\Lambda}_\nu$, and moreover,
\begin{eqnarray*}
|\partial_\lambda^l  P_\nu |_{D_\nu \times {\tilde{\Lambda}}_\nu} \leq c\frac{\delta\gamma_\nu^{l_0 + 9} s_\nu^{2} \mu_\nu}{\eta_\nu^{l_0}}, ~|l| \leq l_0;
\end{eqnarray*}
\item[\bf{(2)}] $\Phi_\nu = \phi_F^t\circ \phi: \hat{D} \times {\Lambda}_0 \rightarrow \hat{D}_{\nu - 1}, D_\nu \times {\Lambda}_\nu \rightarrow D_{\nu-1}$, is symplectic for each $\lambda \in {\Lambda}_0$, and is of class $C^{l_0 + 1 + \sigma_0, l_0 -1 +\sigma_0}$, $C^{\alpha, l_0}$, respectively, where $\alpha$ stands for real analyticity and $0<\sigma_0<1$ is fixed. Moreover,
\begin{eqnarray*}
{H}_\nu = H_{\nu-1}\circ \Phi_\nu = N_\nu+ {P}_\nu,
\end{eqnarray*}
on $\hat{D} \times {\Lambda}_\nu$, and
\begin{eqnarray*}
|\Phi_\nu - id |_{C^{l_0+ 1 + \sigma_0,l_0 -1 + \sigma_0} (\hat{D} \times \tilde{\Lambda}_0)} \leq c_0 \delta \frac{\mu_0}{2^\nu};
\end{eqnarray*}
\item[\bf{(3)}] $\Lambda_\nu = \{\lambda\in \Lambda_{\nu-1}: |\breve{L}_{k0, \nu}| > \frac{\gamma_\nu}{|k|^\tau}, \breve{L}_{k1, \nu}^* \breve{L}_{k1, \nu} > \frac{\gamma_\nu}{|k|^\tau}  I_{m + 2m_0},  \\
   ~~~~~~~ \breve{L}_{k2, \nu}^* \breve{L}_{k2, \nu} > \frac{\gamma_\nu}{|k|^\tau}  I_{m^2 + 2m m_0+ 4m_0^2},  ~for~all~0< |k|\leq K_\nu\}$.
\end{itemize}
\end{lemma}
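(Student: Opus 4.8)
The plan is to prove Lemma~\ref{balala} by induction on $\nu$, treating the KAM step of Section~\ref{KAM} as a self-contained module: whenever its hypotheses \textbf{(H1)}--\textbf{(H7)} hold at a given level, the truncation, homological-equation, transformation and new-perturbation lemmas produce the next Hamiltonian together with all the quantitative estimates recorded there. For the base case $\nu=0$, inequality $(\ref{714})$ is exactly the standing hypothesis, so all that must be checked is that \textbf{(H1)}--\textbf{(H7)} hold for the $0$-th step, i.e. with $r=r_0$, $s=s_0$, $\gamma=\gamma_0$, $\mu=\mu_0$, $K_+=K_1$. The implicit definitions of $s_0$ and $\mu_0$ and the choice $K_1=([\log\frac1{\mu_0}]+1)^{3\eta}$ were arranged precisely so that each of \textbf{(H1)}--\textbf{(H7)} reduces to an elementary inequality valid once $\mu$ (equivalently $\mu_0$) is small; I would check these one by one, e.g. \textbf{(H1)} from $K_1\gtrsim 1/(r_0-r_1)$ and \textbf{(H5)}--\textbf{(H7)} from the smallness of $\delta s_0^{a_j}\mu_0^{d_j}\Gamma(r_0-r_1)$ relative to the first domain losses.

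For the inductive step I would assume items \textbf{(1)}--\textbf{(3)} through index $\nu$, so that $P_\nu$ obeys $(\ref{714})$ with $\nu$-subscripts and $N_\nu$ retains the normal form with coefficients $e_\nu,\omega_\nu,M_\nu,h_\nu$ which, by telescoping $(\ref{E_18})$--$(\ref{E_21})$ over the earlier steps, satisfy $|\partial_\lambda^l(M_\nu-M_0)|\le c\sum_{i<\nu}\gamma^b\mu_i=O(\gamma^b\mu_0)$ and likewise for $\omega_\nu,e_\nu,h_\nu$. This accumulated bound is what makes \textbf{(H3)} available at level $\nu$, since $\sum_i\mu_i\ll\mu_0^{1/2}$ when $\mu_0$ is small (the $\mu_i$ decay super-exponentially, $\mu_{i+1}=(64c_0)^{1/(1-\lambda_0)}\mu_i^{1+\sigma}$). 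Granting \textbf{(H3)}, I would check \textbf{(H1)},\textbf{(H2)},\textbf{(H4)} — which compare $K_{\nu+1}=([\log\frac1{\mu_\nu}]+1)^{3\eta}$ and $s_{\nu+1}=\tfrac18\alpha_\nu s_\nu$ against $(r_\nu-r_{\nu+1})^{-1}\sim 2^{\nu}$ and $\gamma_\nu$ — and \textbf{(H5)},\textbf{(H6)},\textbf{(H7)} — which bound the size of $F$ (of order $\delta s_\nu^{a_j}\mu_\nu^{d_j}\Gamma(r_\nu-r_{\nu+1})$) and of its flow against the geometric domain losses $r_\nu-r_{\nu+1}$ and $\alpha_\nu s_\nu$. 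All of these close because $\log\frac1{\mu_\nu}\gtrsim(1+\sigma)^\nu$ while $\Gamma(r_\nu-r_{\nu+1})$ and the domain losses only vary geometrically in $\nu$, and the fixed integer $\eta$ was chosen with $(1+\sigma)^\eta>2$ precisely so that $K_{\nu+1}$ outgrows these geometric factors. Then I would apply, in turn, the truncation lemma; the homological-equation lemma, whose solvability set is exactly $\Lambda_{\nu+1}$, giving \textbf{(3)}; the two transformation-estimate lemmas, yielding $\Phi_{\nu+1}=\phi_F^t\circ\phi$ symplectic, of the stated Hölder/analytic class, with $H_\nu\circ\Phi_{\nu+1}=N_{\nu+1}+P_{\nu+1}$, $N_{\nu+1}$ again in normal form and $h_{\nu+1}=O(|(y,z)|^3)$, and $|\Phi_{\nu+1}-\mathrm{id}|\le c_0\delta\gamma^{b-1}\mu_0/2^{\nu+1}$, giving \textbf{(2)}; and finally the new-perturbation lemma together with \textbf{(H7)} to obtain $|\partial_\lambda^l\partial_x^i\partial_{(y,z)}^jP_{\nu+1}|\le\delta\gamma_{\nu+1}^{b_j}s_{\nu+1}^{a_j}\mu_{\nu+1}^{d_j}$, i.e. \textbf{(1)}. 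This closes the induction.

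I expect the main difficulty to be not any single estimate but the simultaneous bookkeeping guaranteeing that all of \textbf{(H1)}--\textbf{(H7)} survive under the prescribed sequences $r_\nu,\gamma_\nu,s_\nu,\beta_\nu,\mu_\nu,K_\nu$. The delicate point is the circularity around \textbf{(H3)}: it is needed to run step $\nu$, yet it is only secured by controlling the accumulated drift of $h$ through steps $0,\dots,\nu-1$, which itself relies on the per-step bounds $(\ref{E_19})$ being legitimate — so a \textbf{(H3)}-type bound must be carried along as part of the induction hypothesis and closed via $\sum_i\mu_i=O(\mu_0)\ll\mu_0^{1/2}$. A secondary subtlety is that $K_{\nu+1}$ is only polylogarithmically large in $1/\mu_\nu$, not a power of it; the choice $(1+\sigma)^\eta>2$ is exactly what keeps the Fourier-truncation tail and the (only geometrically small) analyticity loss compatible at every step, and is what forces the super-exponential schedule $\mu_{\nu+1}\sim\mu_\nu^{1+\sigma}$.
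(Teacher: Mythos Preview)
Your proposal is correct and is exactly the approach the paper intends: the paper's own proof consists of a single sentence stating that one must verify conditions \textbf{(H1)}--\textbf{(H7)} at every step, declares this standard, and refers to Lemma~4.1 of \cite{Li} for details. Your inductive outline, including the bookkeeping on \textbf{(H3)} via the accumulated drift $\sum_i\mu_i\ll\mu_0^{1/2}$ and the role of $(1+\sigma)^\eta>2$ in making $K_{\nu+1}$ outgrow the geometric domain losses, is precisely the content one would supply when unpacking that reference.
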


\begin{proof}
The proof of this lemma is to verify conditions $\bf{(H1)} - \bf{(H7)}$. Those are standard and we place the detail on Appendix \ref{B}.

\end{proof}

\subsection{Convergence and measure estimate}\setcounter{equation}{0}
Let $ \Psi^{\nu } = \Phi_1 \circ \Phi_2 \circ \cdots \circ \Phi_\nu, ~~ \nu = 1,2, \cdots.$ Then $\Psi^{\nu}: \tilde{D}_{\nu} \times \Lambda _0(g,G) \rightarrow \tilde{D}_0$, and
  \begin{eqnarray}
   \nonumber H_0 \circ \Psi^{\nu} &=& H_{\nu} = N_{\nu}+ P_{\nu},\\
   \nonumber  N_{\nu}&=& e_{\nu} + \langle \omega_\nu , y\rangle+ h_\nu (y, \omega),~~\nu= 0,1,\cdots,
  \end{eqnarray}
  where $\Psi _0 = id $.

  Standardly, $N_\nu$ converges uniformly to $N_\infty$, $P_\nu$ converges uniformly to $P_\infty$
 and $\partial _y^i \partial_z^j P_\infty = 0,~|i|+|j|\leq 2.$

Hence for each $\lambda\in \Lambda_{\infty}$, $T^d \times \{0\} \times \{0\}$ is an analytic invariant torus of $H_\infty$ with the toral frequency $\omega_\infty$, which for all $k\in Z^m \backslash \{0\},~1\leq q\leq n$, by the definition of $\Lambda_\nu$ and Lemma \ref{balala} (2), satisfies the following facts
\begin{itemize}
  \item [\bf{(1)}]  if $\bf{(A1)}$ holds and $M$ is nonsingular, then $\omega_\infty  \equiv \omega_0,~~~|\langle k, \omega_\infty \rangle| > \frac{\gamma}{2|k|^\tau};$
  \item [\bf{(2)}]if $\bf{(A1)}$ and $\bf{(A3)}$ hold and $M$ is nonsingular, then on a given energy surface $\omega_\infty  \equiv t \omega_0, ~~|\langle k, \omega_\infty \rangle| > \frac{\gamma}{2|k|^\tau};$
  \item [\bf{(3)}]if $\bf{(A1)}$ and $\bf{(A2)}$ hold, then $ (\omega_\infty)_{i_q}  \equiv (\omega_0)_{i_q}, q= 1, \cdots, n, ~~|\langle k, \omega_\infty \rangle| > \frac{\gamma}{2|k|^\tau};$
  \item [\bf{(4)}]if $\bf{(A1)}$, $\bf{(A2)}$ and $\bf{(A3)}$ hold, then $(\omega_\infty)_{i_q}  \equiv t (\omega_0)_{i_q}, q= 1, \cdots, n, ~~|\langle k, \omega_\infty \rangle| > \frac{\gamma}{2|k|^\tau}.$
\end{itemize}

Following the Whitney extension of $\Psi^\nu,$ all $e_\nu,$ $\omega_\nu,$ $h_\nu,$ $P_\nu,$ $(\nu = 0,1,\cdots)$ admit uniform $C^{l_0-1 +\sigma_0}$ extensions in $\lambda \in \Lambda_0$ with derivatives in $\lambda$ up to order $l_0-1$. Thus, $e_\infty$, $\omega_\infty$, $h_\infty$, $P_\infty$ are $C^{l_0-1}$ Whitney smooth in $\lambda \in \Lambda_{\infty}$, and the derivatives of $e_\infty -e_0$, $\omega_\infty -\omega_0$, $h_\infty -h_0$ satisfy similar estimates. Consequently, the perturbed tori form a $C^{l_0-1}$ Whitney smooth family on $\Lambda_{\infty}(g,G)$.

The measure estimate is the same as ones in \cite{LLL,Qian,Qian2,Qian3} and for the sake of completeness we place details on Appendix \ref{C}. Now we have finished the proof of Theorem \ref{shengluede}.

\section{Proof of Theorem \ref{dingli11}}\label{074}

For $d$-dimensional manifold $\mathcal{M}$ with a global coordinate, there is a bounded closed region $\Lambda\in R^{m}$ and a $C^{l_0}$ diffeomorphism $I: \Lambda\rightarrow \mathcal{M}$ such that $\mathcal{M} = I(\Lambda)$. Under the transformation $I \mapsto I+ I(\lambda)$, Hamiltonian system $(\ref{005})$ is changed to
\begin{eqnarray}\label{EQ17}
H(I, \theta, \lambda,\varepsilon) = e + \langle \omega(\lambda), I\rangle + \frac{1}{2}\langle I, \partial_I^2 H_0(\lambda) I\rangle + O(|I|^3) + \varepsilon P(I,\theta,\lambda,\varepsilon),
\end{eqnarray}
where $e = H_0(I(\lambda))$, $\omega(\lambda) = \partial_I H (I(\lambda))$. Let
\begin{eqnarray*}
\Gamma = K_0^T \partial_{I}^2 H_0 (\lambda) K_0 = \left(
                                              \begin{array}{cc}
                                                \Gamma_{11} & \Gamma_{12} \\
                                                \Gamma_{21} & \Gamma_{22} \\
                                              \end{array}
                                            \right),
\end{eqnarray*}
where $\Gamma_{11},$ $\Gamma_{12}$, $\Gamma_{21},$ $\Gamma_{22}$ are $m\times m,$ $m\times m_0$, $m_0\times m,$ $m_0\times m_0$ matrices, respectively, $\Gamma_{12} = \Gamma_{21}^T$, $\Gamma_{22} = {K'}^T \partial_{I}^2 H_0 (\lambda) K'$, and $m_0 = d - m$. Denote $\omega^{*}(\lambda) = K_{*}^T \omega(\lambda) \in \widetilde{\Lambda} (g,\Lambda),$ where $\widetilde{\Lambda}(g, \Lambda) = \{ \lambda\in \Lambda: \langle k, \omega(\lambda)\rangle = 0, k \in g\}$ and $\hat{\Lambda} (g, \Lambda) = \{\omega^*(\lambda) = K_*^T\omega\in R^m, \lambda \in \widetilde{\Lambda}(g, \Lambda)\}$. Recall $p=(y, v),$ $q= (x,u)$, where $y = (p_1, \cdots ,p_m)^T,$ $v = (p_{m+1}, \cdots ,p_d)^T,$ $x = (q_1, \cdots ,q_m)^T,$ $u = (q_{m+1}, \cdots ,q_d)^T.$ For any $\lambda \in \widetilde{\Lambda}(g, \Lambda)$, with the following coordinate transformation $I  = K_0 p$, $q = K_0 ^ T \theta$, Hamiltonian (\ref{EQ17}) is changed to
\begin{eqnarray}
\nonumber H(x,y,u,v)&=& \langle\omega^{*} , y\rangle + \frac{1}{2}\langle \left(
                                                                \begin{array}{c}
                                                                  y \\
                                                                  v \\
                                                                \end{array}
                                                              \right)
 , \Gamma(\lambda) \left(
                                                                \begin{array}{c}
                                                                  y \\
                                                                  v \\
                                                                \end{array}
                                                              \right)\rangle\\
\label{qq}  &~& + O(|K_0 \left(
                                                                \begin{array}{c}
                                                                  y \\
                                                                  v \\
                                                                \end{array}
                                                              \right)|^3) + \varepsilon  \bar{P} (x,y,u,v, \varepsilon)
\end{eqnarray}
up to a constant, where $\bar{P}(x, y, u, v, \varepsilon) = P( K_0 \left(
             \begin{array}{c}
               y \\
               v \\
             \end{array}
           \right), (K_0 ^T)^{-1} \left(
          \begin{array}{c}
            x \\
            u \\
          \end{array}
        \right),\varepsilon).$ By the following symplectic transformation:
 \begin{eqnarray}\label{Eq1p}
 \left(
   \begin{array}{c}
     y \\
     v \\
   \end{array}
 \right)
  \rightarrow \varepsilon^{\frac{1}{4}} \left(
   \begin{array}{c}
     y \\
     v \\
   \end{array}
 \right), ~\left(
             \begin{array}{c}
               x \\
               u \\
             \end{array}
           \right)
   \rightarrow \left(
             \begin{array}{c}
               x \\
               u \\
             \end{array}
           \right), ~H \rightarrow\varepsilon^{-\frac{1}{4}}H,
 \end{eqnarray}
 Hamiltonian (\ref{qq}) is changed to
 \begin{eqnarray}\label{N1}
\nonumber H(x,y,u,v)&=& \langle\omega^{*} , y\rangle + \frac{\varepsilon^{\frac{1}{4}}}{2}\langle \left(
                                                                \begin{array}{c}
                                                                  y \\
                                                                  v \\
                                                                \end{array}
                                                              \right), \Gamma(\lambda) \left(
                                                                \begin{array}{c}
                                                                  y \\
                                                                  v \\
                                                                \end{array}
                                                              \right) \rangle\\
 &~& + \varepsilon^{\frac{1}{2}}O(|K_0 \left(
                                                                \begin{array}{c}
                                                                  y \\
                                                                  v \\
                                                                \end{array}
                                                              \right)|^3) + \varepsilon^{\frac{3}{4}}  \bar{P} (x,y,u,v, \varepsilon).
 \end{eqnarray}

In order to use Theorem $\ref{shengluede}$, we should reduce Hamiltonian system $(\ref{N1})$ to $(\ref{model33})$. But the traditional method fails due to high degeneracy of perturbation, which does not guarantee that the perturbation is sufficiently small. Hence we have to proceed a program, finite quasilinear KAM steps, to improve the order of the perturbation. To fix thought, we only give an outline.

Let $\epsilon = \varepsilon^{\frac{1}{4}}$. Rewrite Hamiltonian system (\ref{N1}) with the following form:
\begin{eqnarray} \label{xiaoming}
H_1(x,y,u,v) &=& N_1(y,v) + \epsilon^2 {P_1} (x,y,u,v,\epsilon),
\end{eqnarray}
where $N_1= \langle\omega_1, y\rangle + \hat{h}_1,$ $\hat{h}_1=\frac{\epsilon}{2} \langle \left(
                                                                                   \begin{array}{c}
                                                                                     y \\
                                                                                     v \\
                                                                                   \end{array}
                                                                                 \right)
, \breve{M}_1 \left(
           \begin{array}{c}
             y \\
             v \\
           \end{array}
         \right)
\rangle+ \epsilon^2 O(|K_0 \left(
                                                                       \begin{array}{c}
                                                                         y \\
                                                                         v \\
                                                                       \end{array}
                                                                     \right)|^3),$\\ $P_1(x,y,u,v) = \epsilon P(x,y,u,v).$
Rewrite $\breve{M}_1 = \left(
                      \begin{array}{cc}
                        \breve{M}_{11,1} & \breve{M}_{12,1} \\
                        \breve{M}_{21,1} & \breve{M}_{22,1} \\
                      \end{array}
                    \right)
,$ where $\breve{M}_{11,1},$ $\breve{M}_{12,1}$, $\breve{M}_{21,1},$ $\breve{M}_{22,1}$ are $m\times m,$ $m\times m_0$, $m_0\times m,$ $m_0\times m_0$ matrices, respectively. Let $z =  (u,v)$ and $M_1 = \left(
                               \begin{array}{ccc}
                                 \breve{M}_{11,1} & \tilde{M}_{1,1} & \breve{M}_{12,1} \\
                                \tilde{M}_{2,1}  & \tilde{M}_{3,1} & \tilde{M}_{4,1} \\
                                 \breve{M}_{21,1} & \tilde{M}_{5,1} & \breve{M}_{22,1} \\
                               \end{array}
                             \right)
$, where $\tilde{M}_{1,1} =0$, $\tilde{M}_{2,1} =0,$  $\tilde{M}_{3,1}=0, $ $\tilde{M}_{4,1}= 0,$ $\tilde{M}_{5,1} =0$ with obvious dimension. Choose $\epsilon = \delta$, $\gamma = \delta^{\frac{1}{4(9+l_0)}}$, $s = \delta^{\frac{1}{4}}$, $\mu = \delta^{\frac{1}{4}}$. Then $(\ref{xiaoming})$ is changed to
\begin{eqnarray} \label{xiaoming1}
H_1(x,y,u,v,\lambda) &=& N_1(y,v,\lambda) + \delta^2 {P_1} (x,y,u,v,\lambda,\varepsilon),
\end{eqnarray}
where $N_1= \langle\omega_1(\lambda), y\rangle + \hat{h}_1,$ $\hat{h}_1=\frac{\delta}{2} \langle \left(
                                                                                   \begin{array}{c}
                                                                                     y \\
                                                                                     z \\
                                                                                   \end{array}
                                                                                 \right)
, M_1(\lambda) \left(
           \begin{array}{c}
             y \\
             z \\
           \end{array}
         \right)
\rangle+ \delta^2 h_1,$ $|{P_1}(x,y,u,v,\lambda)| \leq \gamma^{l_0+9} s^2 \mu.$
Here, $h_1$ is a polynomial of $K_0 \left(
                                                                       \begin{array}{c}
                                                                         y \\
                                                                         v \\
                                                                       \end{array}
                                                                     \right)$ from the third order term.
Let ${M}_{11,1} = \breve{M}_{11,1}$, ${M}_{12,1} = (\tilde{M}_{1,1}, \breve{M}_{11,1})$, ${M}_{21,1} = \left(
                                                                                                   \begin{array}{c}
                                                                                                     \tilde{M}_{2,1} \\
                                                                                                     \breve{M}_{21,1} \\
                                                                                                   \end{array}
                                                                                                 \right)
$, ${M}_{22,1} = \left(
                     \begin{array}{cc}
                       \tilde{M}_{3,1} & \tilde{M}_{4,1} \\
                      \tilde{ M}_{5,1} & \breve{M}_{22,1} \\
                     \end{array}
                   \right).
$

Write, for $|i|+ |j| \leq 2$,
\begin{eqnarray*}
{P}_1&=&  \sum_{k} p_{kij}y^i z^j e^{\sqrt{-1} \langle k, x\rangle},\\
R_1&=&  \sum_{|k|\leq K_1} p_{kij}y^i z^j e^{\sqrt{-1} \langle k, x\rangle},\\
{{P}_1} - R_1 &=&  \sum_{|k|> K_1} p_{kij} y^i z^j e^{\sqrt{-1} \langle k, x\rangle},\\
\end{eqnarray*}
where $K_1$ is specified in Section $\ref{normal form}$.

Next, we are going to improve the order of ${{P}_1}$ by the symplectic transformation $\Phi_{F_1}^1$, the time$-1$ map generated by the vector field $J \nabla F_1$ with $J = \left(
      \begin{array}{cccc}
        0 & I_{m} & 0 & 0 \\
        -I_{m} & 0 & 0 & 0 \\
        0 & 0 & 0 & I_{m_0} \\
        0 & 0 & -I_{m_0} & 0 \\
      \end{array}
    \right)$,
    where $F_1(x,y,z,\lambda) = \sum\limits_{0<|k|\leq K_1} f_{kij} y^i z^j e^{\sqrt{-1} \langle k, x\rangle}$ that satisfies
\begin{eqnarray}\label{tongdiao31}
\{N_1,F_1\} + \delta^2 (R_1- [R_1]) - R_1' = 0,\\
\nonumber R_1' = \partial_z h_1 J \partial_z F_1+  \langle y, {M}_{12,1}J {\Delta}_0 \rangle+ \langle z, {M}_{22,1}J \Delta_0 \rangle,\\
\nonumber \Delta_0 = \langle y, \partial_z f_{k20} y\rangle + \langle y, \partial_z f_{k11} z\rangle+ \langle z, \partial_z f_{k02}z\rangle,\\
\nonumber~[R_1](y,z,\lambda,\varepsilon)= \int_{T^m} {R_1}(x,y,z,\lambda,\varepsilon)dx.
\end{eqnarray}

Using (\ref{tongdiao31}) and comparing coefficients, we obtain the following quasilinear homological equations
\begin{eqnarray}
 {L}_{k0, 1} f_{k00} &=& p_{k00},\\
 {L}_{k1, 1 } \left(
         \begin{array}{c}
           f_{k10} \\
           f_{k01} \\
         \end{array}
       \right) &=& \left(
         \begin{array}{c}
           p_{k10} \\
           p_{k01} \\
         \end{array}
       \right) +\delta \left(
         \begin{array}{c}
            {M}_{12,1} J \partial_z f_{k00} \\
            {M}_{22,1} J \partial_z f_{k00} \\
         \end{array}
       \right),\\
 {L}_{k2,1} \left(
         \begin{array}{c}
           T (f_{k20})\\
           T (f_{k11})\\
           T(f_{k02}) \\
         \end{array}
       \right) &=& \left(
                   \begin{array}{c}
                     T(p_{k20}) \\
                     T(p_{k11}) \\
                     T(p_{k02}) \\
                   \end{array}
                 \right) + \delta \left(
                               \begin{array}{c}
                                 T({M}_{12,1} J \partial_z (f_{k10})^T) \\
                                 T(\check{M}_{1}) \\
                                 T({M}_{22,1} J \partial_z(f_{k01})^T) \\
                               \end{array}
                             \right),~~~~~~~
\end{eqnarray}
where $\check{M}_1 = {M}_{12,1} J \partial_z (f_{k01})^T + ({M}_{22,1} J \partial_z (f_{k00})^T)^T$, which are uniquely solvable on the following domain
\begin{eqnarray*}
\Lambda_1 &=& \{\lambda\in \Lambda_0: |\breve{L}_{k0,1}| > \frac{\gamma_1}{|k|^\tau}, \breve{L}_{k1,1}^* \breve{L}_{k1,1} > \frac{\gamma_1}{|k|^\tau}  I_{m + 2m_0},  \\
&~&\breve{L}_{k2,1}^* \breve{L}_{k2,1} > \frac{\gamma_1}{|k|^\tau}  I_{m^2 + 2m m_0+ 4m_0^2},  ~for~all~0< |k|\leq K_1\}.
\end{eqnarray*}
By (\ref{tongdiao31}), we have
\begin{eqnarray*}
  \bar{H}_2= H_1 \circ \Phi _{F_1}^1 = N_2(y,u,v,\lambda) + \delta^2 \bar{P}_2 (x,y,u,v,\lambda,\varepsilon),
\end{eqnarray*}
where
\begin{eqnarray*}
N_2 &=& N_1 + \delta^2 [{R}_1],\\
\bar{P}_2 &=& \frac{1}{\delta^2}(R_1' + \int_0^1 \{R_{1,t}, F_1\}\circ \Phi_{F_1}^t dt +\delta^2 ({\bar{P}_1} - R_1) \circ\Phi_{F_1}^1),\\
R_{1,t} &=&  t\delta^2 R_1 + (1-t)R_1' + (1-t)\delta^2 [R_1].
\end{eqnarray*}
It is easy to see that $[{R}_1]$ has critical point on $u$, due to the $T^{m_0}-$periodicity in $u$.
Consider the following transformation
$$\phi: ~~x \rightarrow x, ~~ y \rightarrow y+ y_0, ~~v \rightarrow v+ v_0,~~u\rightarrow u,$$
where $y_0$ and $v_0$ are determined by the following equation$:$
\begin{eqnarray*}
\delta \breve{M}_1 \left(
           \begin{array}{c}
             y_0 \\
             v_0 \\
           \end{array}
         \right) + \delta^2 \left(
                              \begin{array}{c}
                                \partial_y h(y_0, v_0) \\
                                \partial_v h(y_0, v_0) \\
                              \end{array}
                            \right) = \delta^2 \left(
                                                 \begin{array}{c}
                                                   \partial_y [{R}_1] \\
                                                    \partial_v [{R}_1]\\
                                                 \end{array}
                                               \right).
\end{eqnarray*}
Here and below, denote $[R_i]_2 = O(|\left(
               \begin{array}{c}
                 y \\
                 u\\
                 v \\
               \end{array}
             \right)
|^2).$ Then
\begin{eqnarray}\label{jiayou}
 H_2&=& N_2(y,u,v,\lambda) + \delta^2 {P}_2 (x,y,u,v,\lambda,\varepsilon),
\end{eqnarray}
where
\begin{eqnarray*}
N_2 &=& \langle\omega_2, y\rangle + \frac{\delta}{2} \langle \left(
                                                                                   \begin{array}{c}
                                                                                     y \\
                                                                                     v \\
                                                                                   \end{array}
                                                                                 \right)
, \breve{M}_2 \left(
           \begin{array}{c}
             y \\
             v \\
           \end{array}
         \right)
\rangle+ \delta^2h_2+ \delta^2 [{R}_1]_2,\\
\omega_2 & =& \omega+\delta \breve{M}_1 \left(
                        \begin{array}{c}
                          y_0 \\
                          v_0 \\
                        \end{array}
                      \right) + \delta^2 \left(
                                           \begin{array}{c}
                                             \partial_y h (y_0, v_0) \\
                                             \partial_v h (y_0, v_0) \\
                                           \end{array}
                                         \right)+ \delta^2 \left(
                                                             \begin{array}{c}
                                                               \partial_y [{R}_1] \\
                                                               \partial_v [{R}_1] \\
                                                             \end{array}
                                                           \right),\\
\breve{M}_2 &=&\breve{ M}_1 + \delta^2 \partial_{(y,v)}^2  h_1,\\
h_2 &=& O(|K_0 \left(
                                                                       \begin{array}{c}
                                                                         y \\
                                                                         v \\
                                                                       \end{array}
                                                                     \right)|^3),\\
P_2&=&\bar{P}_2\circ\phi + \langle \left(
                                     \begin{array}{c}
                                       y \\
                                       v \\
                                     \end{array}
                                   \right), \partial_{(y,v)}^2 [{R}_1] \left(
                                                                             \begin{array}{c}
                                                                               y_0 \\
                                                                               v_0 \\
                                                                             \end{array}
                                                                           \right)
\rangle.
\end{eqnarray*}
Moreover,
 \begin{eqnarray*}
 | P_2 |\leq c \delta^{\frac{89}{48}}.
 \end{eqnarray*}

Here and below, we denote $c$ the positive constant independent of the iteration process. Generally, the $\kappa-$th KAM step state as follows, where $\kappa$ is a given constant. After $\kappa$ KAM steps, we get
\begin{eqnarray}\label{kappa-1}
 H_\kappa&=& N_\kappa(y,u,v,\lambda) + \delta^2 {P}_\kappa (x,y,u,v,\lambda,\varepsilon),\\
\nonumber N_\kappa &=& \langle\omega_\kappa, y\rangle + \frac{\delta}{2} \langle \left(
                                                                                   \begin{array}{c}
                                                                                     y \\
                                                                                     v \\
                                                                                   \end{array}
                                                                                 \right)
, \breve{M}_\kappa \left(
           \begin{array}{c}
             y \\
             v \\
           \end{array}
         \right)
\rangle + \delta^2h_\kappa+ \delta^2 [{R}_1]_2+ \cdots+\delta^2 [{R}_{\kappa}]_2,
\end{eqnarray}
Denote $\breve{M}_\kappa = \left(
                      \begin{array}{cc}
                        \breve{M}_{11,\kappa} & \breve{M}_{12,\kappa} \\
                        \breve{M}_{21,\kappa} & \breve{M}_{22,\kappa} \\
                      \end{array}
                    \right)
,$ where $\breve{M}_{11,\kappa},$ $\breve{M}_{12,\kappa}$, $\breve{M}_{21,\kappa},$ $\breve{M}_{22,\kappa}$ are $m\times m,$ $m\times m_0$, $m_0\times m,$ $m_0\times m_0$ matrices, respectively. Let $\tilde{M}_\kappa = \left(
                               \begin{array}{ccc}
                                 \breve{M}_{11,\kappa} & \tilde{M}_{1,\kappa} & \breve{M}_{12,\kappa} \\
                                \tilde{M}_{2,\kappa}  & \tilde{M}_{3,\kappa} & \tilde{M}_{4,\kappa} \\
                                 \breve{M}_{21,\kappa} & \tilde{M}_{5,\kappa} & \breve{M}_{22,\kappa} \\
                               \end{array}
                             \right)
$, where $\tilde{M}_{1,\kappa} =0$, $\tilde{M}_{2,\kappa} =0,$  $\tilde{M}_{3,\kappa}=0, $ $\tilde{M}_{4,\kappa}= 0,$ $\tilde{M}_{5,\kappa} =0$ with obvious dimension. Let $M_\kappa = \left(
                  \begin{array}{cc}
                    M_{11,\kappa} & M_{12,\kappa}  \\
                    M_{21,\kappa} & M_{22,\kappa}  \\
                  \end{array}
                \right),
$ where $M_{11, \kappa} = \breve{M}_{11,\kappa} + \delta \partial_y^2 ( [{R}_1]_2+ \cdots+ [{R}_{\kappa}]_2),$ $M_{12, \kappa} = (\tilde{M}_{1,\kappa}, \breve{M}_{12,\kappa}) + \delta \partial_y \partial_z ( [{R}_1]_2+ \cdots+ [{R}_{\kappa}]_2),$ $M_{21, \kappa} = \left(
                                                                                                                                                         \begin{array}{c}
                                                                                                                                                           \tilde{M}_{1,\kappa} \\
                                                                                                                                                           \breve{M}_{12,\kappa} \\
                                                                                                                                                         \end{array}
                                                                                                                                                       \right)
 + \delta\partial_z \partial_y ( [{R}_1]_2+ \cdots+ [{R}_{\kappa}]_2),$ $M_{22, \kappa} = \left(
                                                                                              \begin{array}{cc}
                                                                                                \tilde{M}_{3,\kappa} & \tilde{M}_{4,\kappa} \\
                                                                                                \tilde{M}_{5,\kappa} &  \breve{M}_{22,\kappa} \\
                                                                                              \end{array}
                                                                                            \right)
 + \delta \partial_z^2 ( [{R}_1]_2+ \cdots+ [{R}_{\kappa}]_2).$ Rewrite $(\ref{kappa-1})$ as follows:
 \begin{eqnarray}\label{kappa}
 H_\kappa&=& N_\kappa(y,u,v,\lambda) + \delta^2 {P}_\kappa (x,y,u,v,\lambda,\varepsilon),\\
\nonumber N_\kappa &=& \langle\omega_\kappa, y\rangle + \frac{\delta}{2} \langle \left(
                                                                                   \begin{array}{c}
                                                                                     y \\
                                                                                     z \\
                                                                                   \end{array}
                                                                                 \right)
, M_\kappa \left(
           \begin{array}{c}
             y \\
             z \\
           \end{array}
         \right)
\rangle + \delta^2h_\kappa.
\end{eqnarray}

Write, for $|i|+ |j| \leq 2$,
\begin{eqnarray*}
{P}_\kappa&=&  \sum_{k} p_{kij}y^i z^j e^{\sqrt{-1} \langle k, x\rangle},\\
\label{R_1} R_\kappa&=& \sum_{|k|\leq K_{\kappa}} p_{kij}y^i z^j e^{\sqrt{-1} \langle k, x\rangle},\\
\label{I_1}{{P}_\kappa} - R_\kappa &=&  \sum_{|k|> K_{\kappa}} p_{kij}y^i z^j e^{\sqrt{-1} \langle k, x\rangle}.\\
\end{eqnarray*}
Improve the order of ${{P}_\kappa}$ by the symplectic transformation $\Phi_{F_\kappa}^1$, where
 \begin{eqnarray} \label{F}
 F_\kappa(x,y,z,\lambda) = \sum\limits_{\substack{0<|k|\leq K_{\kappa}\\|i|+ |j| \leq2}} f_{kij}y^i z^j e^{\sqrt{-1} \langle k, x\rangle}
 \end{eqnarray}
that satisfies
\begin{eqnarray}\label{tongdiao511}
\{N_\kappa,F_\kappa\} + \delta^2 (R_\kappa- [R_\kappa] )- R_\kappa' = 0,
\end{eqnarray}
\begin{eqnarray*}
R_\kappa' &=& \partial_z h_\kappa J \partial_z F_\kappa +  \langle y, {M}_{12, \kappa}J  \Delta_0 \rangle+ \langle z, {M}_{22,\kappa}J \Delta_0 \rangle,\\
\nonumber \Delta_0 &=& \langle y, \partial_z f_{k20} y\rangle + \langle y, \partial_z f_{k11} z\rangle+ \langle z, \partial_z f_{k02}z\rangle,\\
~[R_i]&=& \int_{T^m} {R_i}(x,y,z,\lambda,\varepsilon)dx, ~~~~1\leq i \leq \kappa.
\end{eqnarray*}

Using (\ref{tongdiao511}) and comparing coefficients, we obtain the following nonlinear homological equations
\begin{eqnarray}
 {L}_{k0, \kappa} f_{k00} &=& p_{k00},\\
 {L}_{k1, \kappa} \left(
         \begin{array}{c}
           f_{k10} \\
           f_{k01} \\
         \end{array}
       \right) &=& \left(
         \begin{array}{c}
           p_{k10} \\
           p_{k01} \\
         \end{array}
       \right) +\delta \left(
         \begin{array}{c}
            {M}_{12, \kappa} J \partial_z f_{k00} \\
            {M}_{22, \kappa} J \partial_z f_{k00} \\
         \end{array}
       \right),\\
 {L}_{k2, \kappa} \left(
         \begin{array}{c}
           T (f_{k20})\\
           T (f_{k11})\\
           T(f_{k02}) \\
         \end{array}
       \right) &=& \left(
                   \begin{array}{c}
                     T(p_{k20}) \\
                     T(p_{k11}) \\
                     T(p_{k02}) \\
                   \end{array}
                 \right) + \delta \left(
                               \begin{array}{c}
                                 T({M}_{12, \kappa} J \partial_z (f_{k10})^T) \\
                                 T(\check{M}_\kappa) \\
                                 T({M}_{22, \kappa} J \partial_z(f_{k01})^T) \\
                               \end{array}
                             \right),~~~~~~
\end{eqnarray}
where $\check{M}_\kappa = {M}_{12, \kappa} J \partial_z (f_{k01})^T + ({M}_{22, \kappa} J \partial_z (f_{k00})^T)^T$, which are uniquely solvable on the following domain
\begin{eqnarray*}
\Lambda_{\kappa} &=& \{\lambda\in \Lambda_{\kappa-1}: |\breve{L}_{k0, \kappa}| > \frac{\gamma_\nu}{|k|^\tau}, \breve{L}_{k1, \kappa}^* \breve{L}_{k1, \kappa} > \frac{\gamma_\nu}{|k|^\tau}  I_{m + 2m_0},  \\
&~&\breve{L}_{k2, \kappa}^* \breve{L}_{k2, \kappa} > \frac{\gamma_\nu}{|k|^\tau}  I_{m^2 + 2m m_0+ 4m_0^2},  ~for~all~0< |k|\leq K_{\kappa}\}.
\end{eqnarray*}
Let $u_0$ be the critical point of $\tilde{[R]} = [{R}_1]_2+ \cdots+ [{R}_{\kappa+1}]_2$. Consider the following transformation
$$\phi: ~~x \rightarrow x, ~~ y \rightarrow y+ y_0, ~~v \rightarrow v+ v_0,~~u\rightarrow u,$$
where $y_0$ and $v_0$ are determined by the following equation$:$
\begin{eqnarray*}
\delta \breve{M}_\kappa \left(
           \begin{array}{c}
             y_0 \\
             v_0 \\
           \end{array}
         \right) + \delta^2 \left(
                              \begin{array}{c}
                                \partial_y h(y_0, v_0) \\
                                \partial_v h(y_0, v_0) \\
                              \end{array}
                            \right) = \delta^2 \left(
                                                 \begin{array}{c}
                                                   \partial_y [{R}_1] \\
                                                    \partial_v [{R}_1]\\
                                                 \end{array}
                                               \right).
\end{eqnarray*}

 Then
\begin{eqnarray*}
  H_{\kappa+1}&=& H_\kappa \circ \Phi _{F_\kappa}^1 \circ\phi = N_{\kappa+1}(y,u,v,\lambda) + {P}_{\kappa+1} (x,y,u,v,\lambda,\varepsilon),
\end{eqnarray*}
where
\begin{eqnarray*}
N_{\kappa+1} &=& \langle\omega_{\kappa+1}, y\rangle + \frac{\delta}{2} \langle \left(
                                                                                   \begin{array}{c}
                                                                                     y \\
                                                                                     v \\
                                                                                   \end{array}
                                                                                 \right)
, {M}_{\kappa+1} \left(
           \begin{array}{c}
             y \\
             v \\
           \end{array}
         \right)
\rangle+ \delta^2h_\kappa, \\
\omega_{\kappa+1} & =& \omega_\kappa+\delta \breve{M}_\kappa \left(
                        \begin{array}{c}
                          y_0 \\
                          v_0 \\
                        \end{array}
                      \right) + \delta^2 \left(
                                           \begin{array}{c}
                                             \partial_y h_\kappa (y_0, v_0) \\
                                             \partial_v h_\kappa (y_0, v_0) \\
                                           \end{array}
                                         \right)+ \delta^2 \left(
                                                             \begin{array}{c}
                                                               \partial_y [{R}_\kappa] \\
                                                               \partial_v [{R}_\kappa] \\
                                                             \end{array}
                                                           \right),\\
{M}_{\kappa+1} &=& {M}_\kappa + \delta^2 \partial_{(y,z)}^2  h_\kappa+ \partial_{(y,z)}^2 \tilde{[R]},\\
{P}_{\kappa+1} &=& R_\kappa'\circ\phi + \int_0^1 \{R_{\kappa,t}, F_\kappa\}\circ \Phi_{F_\kappa}^t \circ\phi dt +({P_\kappa} - R_\kappa) \circ\Phi_{F_\kappa}^1\circ\phi\\
&~&+\langle \left(
                                     \begin{array}{c}
                                       y \\
                                       u \\
                                       v \\
                                     \end{array}
                                   \right), \partial_{(y,z)}^2 \tilde{[{R}]} \left(
                                                                             \begin{array}{c}
                                                                               y_0 \\
                                                                               0 \\
                                                                               v_0
                                                                             \end{array}
                                                                           \right)
\rangle,\\
R_{\kappa,t} &=&  t R_\kappa + (1-t)R_\kappa' + (1-t)[R_\kappa].
\end{eqnarray*}
Hence
 \begin{eqnarray*}
 | {P}_{\kappa+1}|\leq c \delta^{\frac{1}{2} + \frac{5}{4}(\frac{13}{12})^{\kappa+1} },~|l| \leq d.
 \end{eqnarray*}

Therefore, after $\kappa$ KAM steps, the new Hamiltonian reads as
 \begin{eqnarray} \label{youyong}
 H_{\kappa+1}  = N_{\kappa+1}+ \delta^{2} P_{\kappa+1},
\end{eqnarray}
where
\begin{eqnarray*}
N_{\kappa+1} &=& \langle\omega_{\kappa+1}, y\rangle + \frac{\delta}{2} \langle \left(
                                                                                   \begin{array}{c}
                                                                                     y \\
                                                                                     v \\
                                                                                   \end{array}
                                                                                 \right)
, \breve{M}_{\kappa+1} \left(
           \begin{array}{c}
             y \\
             v \\
           \end{array}
         \right)
\rangle+ \delta^2h_{\kappa+1} \\
&~&+ \delta^2 [{R}_1]_2+ \delta^2 [{R}_2]_2+ \cdots+  \delta^2 [{R}_\kappa]_2 ,\\
\omega_{\kappa+1} & =& \omega_\kappa+\delta \breve{M}_\kappa \left(
                        \begin{array}{c}
                          y_0 \\
                          v_0 \\
                        \end{array}
                      \right) + \delta^2 \left(
                                           \begin{array}{c}
                                             \partial_y h_\kappa (y_0, v_0) \\
                                             \partial_v h_\kappa (y_0, v_0) \\
                                           \end{array}
                                         \right)+ \delta^2 \left(
                                                             \begin{array}{c}
                                                               \partial_y [{R}_\kappa] \\
                                                               \partial_v [{R}_\kappa] \\
                                                             \end{array}
                                                           \right),\\
\breve{M}_{\kappa+1} &=& \breve{M}_\kappa + \delta^2 \partial_{(y,v)}^2  h_\kappa.
\end{eqnarray*}

Let
\begin{eqnarray*}
\bar{g}&=&\delta^2 [{R}_1]_2+ \delta^2 [{R}_2]_2 +\cdots +\delta^{2} [{R}_{\kappa}]_2\\
 &=& \delta^{\frac{185}{48}} [\bar{R}_1]_2+ \delta^{\frac{2285}{576}} [\bar{R}_2]_2 +\cdots +\delta^{\frac{5}{2}+ \frac{5}{4}(\frac{13}{12})^{\kappa+1} } [\bar{R}_{\kappa}]_2\\
 &=&\sum\limits_{j_1} \delta^{\frac{185}{48}+ j_1} [\bar{R}_1]_2^{(j_1)}+ \sum\limits_{j_2}\delta^{\frac{2285}{576}+ j_2} [\bar{R}_2]_2^{(j_2)} +\cdots\\
 &~& +\sum\limits_{j_\kappa}\delta^{\frac{5}{2}+ \frac{5}{4}(\frac{13}{12})^{\kappa+1} + j_\kappa} [\bar{R}_{\kappa}]_2^{(j_\kappa)}.
\end{eqnarray*}

\begin{definition}\label{HOP}
If the following two hold:

(1)At critical points of $\bar{g}$, $(y_0, u_0, v_0)$,
\begin{eqnarray*}
~\det~ \partial_u^2 \delta^{-a+1} \bar{g} = 0;
\end{eqnarray*}

(2) At critical points of $\bar{g}$, $(y_0, u_0, v_0)$, there is a constant $\bar{\sigma}_0 >0$, such that
\begin{eqnarray*}
|\det \partial_u^2 ~\delta^{-a} \bar{g} |\geq \bar{\sigma}_0,
\end{eqnarray*}
then $\bar{g}$ is called $a-$order nondegenerate at $(y_0, u_0, v_0)$.
\end{definition}

\begin{remark}
Since $\tilde{P}(x,y,u,v)$ is $\kappa-$order nondegenerate, at relative critical point $(y_0, u_0, v_0)$ $\det \partial_u^2 [\tilde{P}_{\kappa}]( y_0,u_0,v_0,0) \neq 0$, which implies that $\bar{g}$ is $a-$order nondegenerate, where $0< a\leq \kappa$. And since $\bar{g}$ is $T^{m_0}$ periodic in $u$, it has $2^{m_0}$ critical points via the high order nondegeneracy and Morse theory \emph{(\cite{Milnor})}.
\end{remark}

\begin{remark}
Assumption $\emph{(2)}$ in definition \ref{HOP} is equivalent to the following $\bf{(\mathfrak{S}1)}$.

\begin{itemize}
\item[$\bf{(\mathfrak{S}1)}$] At critical point of $\bar{g}$, $(y_0, u_0, v_0)$, there exists a constant $ c > 0$ such that the minimum $\lambda_{min}^{\varepsilon} (\omega)$ among absolute values of all eigenvalues of $\partial_u^2 \bar{g}$ satisfies $|\lambda_{min}^{\varepsilon} |\geq c \varepsilon ^a $ for all $\omega \in \Lambda(g,G)$.
\end{itemize}

\end{remark}

At the critical point of $\bar{g}$, $(y_0,u_0,v_0)$, rewrite Hamiltonian system (\ref{youyong}) as follows
\begin{eqnarray}\label{new221}
 H(x,y,u,v)&=& N(y,u,v) +{\delta^{a +1}} \tilde{P}(x,y,u,v,\varepsilon),
\end{eqnarray}
where
\begin{eqnarray*}
N&=&\langle\omega_{\kappa+1}, y\rangle + \frac{\delta}{2} \langle \left(
                                                                                   \begin{array}{c}
                                                                                     y \\
                                                                                     v \\
                                                                                   \end{array}
                                                                                 \right)
, \breve{M}_{\kappa+1} \left(
           \begin{array}{c}
             y \\
             v \\
           \end{array}
         \right)
\rangle+ \delta^2h_\kappa + \frac{\delta^{a}}{2} \langle u, Vu\rangle + \delta^{a}O(|u|^3),\\
\delta^{a + 1} \tilde{P}&=& \delta^{\kappa + 1} P(x,y,u,v,\varepsilon)+ O(\delta^{a+1}),
\end{eqnarray*}
 $x \in T^m$, $y \in R^m$, $u, v \in R ^{m_0}$, $1 \leq a \leq \kappa$. In the above, all $\lambda-$dependence is of class $C^{l_0}$ for some $l_0 \geq d$.

Next we should raise the order of $\tilde{P}$ by performing finite times quasilinear KAM steps. Let $\tilde{\tau}$ be the smallest integer such that $[\frac{5}{2}+ \frac{5}{4}(\frac{13}{12})^{\tilde{\tau}}  ]\geq\frac{3a+1}{2}$, where $a$ is a constant.  After $\tilde{\tau}$ KAM steps mentioned as above, at each critical point, we obtain the following
\begin{eqnarray}
\nonumber H_{\tilde{\tau}}(x,y,u,v) &=& \langle\omega_{\tilde{\tau}}, y\rangle + \frac{\delta}{2} \langle \left(
                                                                                   \begin{array}{c}
                                                                                     y \\
                                                                                     v \\
                                                                                   \end{array}
                                                                                 \right)
, \breve{M}_{\tilde{\tau}} \left(
           \begin{array}{c}
             y \\
             v \\
           \end{array}
         \right)
\rangle+ \delta^2h_{\tilde{\tau}}+ \frac{\delta^{a}}{2}{\langle u , V_{\tilde{\tau}_1}(\lambda) u\rangle} \\
\label{keyileba}&~&~ + \delta^{a}\hat{u}_{\tilde{\tau}_1}(u)+\delta ^{\frac{{3a +1}}{2}} \hat{P}(x,y,u,v,\delta),~~
\end{eqnarray}
up to a constant,  where
\begin{eqnarray*}
V_{\tilde{\tau}_1} &=& V+ \partial_u^2 \tilde{h},~~~~~\hat{u}_{\tilde{\tau}}(u) = \hat{u} + (\tilde{h} - \langle\partial_u^2 \tilde{h} u, u\rangle),\\
\tilde{h} &=& \delta^{\frac{5}{2}+ \frac{5}{4}(\frac{13}{12})^{\kappa+1} + j_\kappa} [\bar{R}_{\kappa+1}]+\cdots+ \delta^{\frac{5}{2}+ \frac{5}{4}(\frac{13}{12})^{\tilde{\tau}} + j_\kappa} [\bar{R}_{\tilde{\tau}}],\\
\hat{P} &=& \delta P(x,y,u,v,\delta), ~~~~~1\leq a\leq\kappa,
\end{eqnarray*}
 with nonsingular $V_{\tilde{\tau}}$. But in each KAM step we have a similar hypothesis in form, $\delta K^{\tau+1} = o(\gamma)$. And the assumption obviously holds for finite times KAM steps. Consider re-scaling $x\rightarrow x$, $y\rightarrow \delta^{\frac{{a -1}}{2}}y$, $u\rightarrow u$, $v \rightarrow \delta^{\frac{{a -1}}{2}} v$, $H\rightarrow \delta^{\frac{{-a+1}}{2}}H$. Then the re-scaled Hamiltonian reads
\begin{eqnarray*}
H_{\tau_1}(x,y, u,v) &=& \langle\omega_{\tilde{\tau}}, y\rangle + \frac{\delta^{\frac{a+1}{2}}}{2} \langle \left(
                                                                                   \begin{array}{c}
                                                                                     y \\
                                                                                     v \\
                                                                                   \end{array}
                                                                                 \right)
, \breve{M}_{\tilde{\tau}} \left(
           \begin{array}{c}
             y \\
             v \\
           \end{array}
         \right)
\rangle+ \delta^2h_{\tilde{\tau}}\\
&~&~+ \frac{\delta^{\frac{a+1}{2}}}{2}{\langle u , V_{\tilde{\tau}}(\lambda)u\rangle}+ \delta^{\frac{a+1}{2}}\hat{u}_{\tilde{\tau}}(u)+\delta ^{a+1} \tilde{P}(x,y,u,v).
\end{eqnarray*}
Denote $\delta^{\frac{a+1}{2}} = \delta$.  Then we have
\begin{eqnarray}\label{model31}
 H(x,y,u,v) =N(y,u,v) + P(x,y,u,v),
\end{eqnarray}
with
\begin{eqnarray*}
N &=& \langle\omega_{\tilde{\tau}}, y\rangle + \frac{\delta}{2} \langle \left(
                                                                                   \begin{array}{c}
                                                                                     y \\
                                                                                     v \\
                                                                                   \end{array}
                                                                                 \right)
, \breve{M}_{\tilde{\tau}} \left(
           \begin{array}{c}
             y \\
             v \\
           \end{array}
         \right)
\rangle+ \delta^2h_{\tilde{\tau}}+ \frac{\delta}{2}{\langle u , V_{\tilde{\tau}} (\lambda)u\rangle}+ \delta \hat{u}_{\tilde{\tau}}(u),\\
 P&=&\delta^2 \tilde{P}(x,y,u,v),~~~~\hat{u}(u)= O(|u|^3),
\end{eqnarray*}
 where $x \in T^m$, $y \in R^m$, $u, v \in R ^{m_0}$. In the above, all $\lambda-$dependence is of class $C^{l_0}$ for some $l_0 \geq d$.

Applying Theorem \ref{shengluede} to (\ref{model31}), the system admits a family of invariant tori. By Morse theory, there are $2^{m_0}$ critical points, and consequently it has $2^{m_0}$ families of resonant torus. This completes the proof of Theorem \ref{dingli11}.

\section{Example}\label{example}
Here we give two examples to show how the program mentioned in section $\ref{074}$ work.
\begin{example} \label{071}
Consider the following Hamiltonian system
\begin{eqnarray}
\nonumber H(\tilde{x}, \tilde{y}) &=& \langle \tilde{\omega}, \tilde{y} \rangle+ \frac{\varepsilon}{2} \langle \tilde{y}, M
\tilde{y} \rangle + \varepsilon^3 \cos(-\frac{ x_2}{2})\\
\label{EQ41}&& + \varepsilon^2 \cos(-\frac{ x_2}{2})\sin (-2 x_1 + x_2) e^{-y_1 - 2y_2},
\end{eqnarray}
where $\tilde{x}= (x_1, x_2)^T$, $\tilde{y}= (y_1, y_2)^T$, $\tilde{\omega} = (\omega_1, 2\omega_1)^T$,  $x_1, x_2 \in T^1$, $y_1$, $y_2$ $ \in R^1$, $\omega_1 \in R\setminus \{0\}$ and $M = \left(
                                                  \begin{array}{cc}
                                                    \frac{1}{4} & 0 \\
                                                    0 & 0 \\
                                                  \end{array}
                                                \right)$.

Consider transformation $\tilde{\phi}_g: \left(
                                                                                                                                                                        \begin{array}{c}
                                                                                                                                                                          y_1  \\
                                                                                                                                                                          y_2 \\
                                                                                                                                                                        \end{array}
                                                                                                                                                                      \right)
\mapsto \left(
          \begin{array}{c}
            y \\
            v \\
          \end{array}
        \right), \left(
                                     \begin{array}{c}
                                       x_1 \\
                                       x_2 \\
                                     \end{array}
                                   \right)
\mapsto \left(
          \begin{array}{c}
            x \\
            u \\
          \end{array}
        \right),
$
where $\left(
          \begin{array}{c}
            y_1 \\
            y_2 \\
          \end{array}
        \right) = \left(
                    \begin{array}{cc}
                      -2 & 0 \\
                      1 & -\frac{1}{2} \\
                    \end{array}
                  \right) \left(
                            \begin{array}{c}
                              v \\
                              y \\
                            \end{array}
                          \right)$, $\left(
          \begin{array}{c}
            x_1 \\
            x_2 \\
          \end{array}
        \right) =
\left(
          \begin{array}{cc}
            -\frac{1}{2} & -1 \\
            0 & -2 \\
          \end{array}
        \right) \left(
                  \begin{array}{c}
                    x \\
                    u \\
                  \end{array}
                \right)$. Denote $\omega = -\omega_1$. Then Hamiltonian (\ref{EQ41}) is changed to
\begin{eqnarray*}
H(x,y,u,v) &=& \omega y + \frac{\varepsilon}{2}v^2 + \varepsilon^3 \cos u + \varepsilon^2~ \cos u ~ \sin x ~e^y,
\end{eqnarray*}
which means that previous works do not apply to this system, since, first, $\left(
                                                  \begin{array}{cc}
                                                    \frac{1}{4} & 0 \\
                                                    0 & 0 \\
                                                  \end{array}
                                                \right)$ is degenerate and, second, the perturbation $P_1 = \varepsilon^3 \cos u + \varepsilon^2~ \cos u ~ \sin x ~e^y$ is $2-$order nondegenerate perturbation.

Next, we will improve the order of ${{P}_1}$ by the symplectic transformation $\Phi_{F_1}^1$, where $ F_1(x,y,u,v) =a_1(y,u,v) \sin x + b_1(y,u,v) \cos x$
satisfies
\begin{eqnarray}\label{063}
\{N,F_1\} + P_1- [P_1] - P_1' = 0,
\end{eqnarray}
\begin{eqnarray*}
P_1' &=& \partial_u N \partial_v F_1 - \partial_v N \partial_u F_1,\\
N &=& \omega y + \frac{\varepsilon}{2}v^2.
\end{eqnarray*}
Take $F_1(x,y,u) = \frac{- \varepsilon^2 ~ \cos u ~ e^y ~ \cos x}{\omega}.$ Then
\begin{eqnarray*}
H_2(x,y,u,v) = N_2(y,u) + P_1'(x,y,u,v,\varepsilon)+ \int_0^1 \{(1-t)\{N, F_1\}+ P_1, F_1\}\circ \phi_{F_1}^t dt,
\end{eqnarray*}
where
\begin{eqnarray*}
N_2(y,u) &=& N(y,u) + \varepsilon^3 \cos u,\\
P_1'(x,y,u,v) &=& \frac{- \varepsilon^3 ~ v~ \sin u~ e^y~ \cos x}{\omega},\\
P_2&=&\int_0^1 \{(1-t)\{N, F_1\}+ P_1, F_1\}\circ \phi_{F_1}^t dt = O(\varepsilon^4).
\end{eqnarray*}
In fact,
\begin{eqnarray*}
R_t &=& (1-t) \{N, F_1\} + P_1\\
&=& (1-t) (- \varepsilon^2 \cos u e^y \sin x - \frac{\varepsilon^3 v\sin u e^y \cos x}{\omega})\\
&~&+ \varepsilon^3 \cos u + \varepsilon^2 \cos u \sin x e^y,\\
\{R_t, F_1\} &=& \frac{\partial R_t}{\partial x} \frac{\partial F_1}{\partial y} - \frac{\partial R_t }{\partial y} \frac{\partial F_1}{\partial x} + \frac{\partial R_t}{\partial u} \frac{\partial F_1}{\partial v} - \frac{\partial R_t }{\partial v} \frac{\partial F_1}{\partial u}\\
&=& \frac{\varepsilon^4 t \cos^2 u e^{2y} \cos 2x}{\omega} + \varepsilon^5 (1-t) \frac{\sin^2 u e^{2y}(\cos 2x +1)}{2 \omega^2}.
\end{eqnarray*}
Let $F_2 = \frac{ \varepsilon^3~ v~ \sin u ~e^y~ \sin x}{\omega^2}.$
Then $\{N_2, F_2\} + P_2 - [P_2] - P_2' = 0,$
where $P_2' = \partial _u N_2 \partial_v F_2 - \partial_v N_2 \partial_u F_2.$
With the help of $\Phi_{F_2}^1$, we have
\begin{eqnarray*}
H_3(x,y,u,v) = N_2(y,u) + P_3(x,y,u,v),
\end{eqnarray*}
where $N_2 =\omega y + \frac{\varepsilon}{2}v^2 + \varepsilon^3 \cos u,$ $P_3= O(\varepsilon^4).$
Therefore, using $\textbf{Theorem \emph{\ref{shengluede}}}$, there are  two families of invariant tori for the Hamiltonian (\ref{EQ41}) associated with relative critical points $(y, u,v) = (y_0,0,0),~ (y_0,\pi,0).$
\end{example}

\begin{example}\label{072}
Consider the following Hamiltonian system
\begin{eqnarray}
\nonumber H(x, y) &=& \langle \tilde{\omega}, \tilde{y} \rangle + \frac{\varepsilon}{2} \langle \tilde{y}, M \tilde{y} \rangle + \varepsilon^4 \cos (-\frac{x_2}{2} + \frac{\iota \pi}{4})\\
\label{EQ44}&&+ \varepsilon^2 \sin (-\frac{x_2}{2}) \sin (-2 x_1 + x_2) e^{-y_1 - 2 y_2},
\end{eqnarray}
where $\tilde{\omega} = (\omega_1, \omega_2)^T$, $\tilde{x} = (x_1, x_2)^T$, $\tilde{y} = (y_1, y_2)^T$, $x_1, x_2\in T^1$, $y_1$, $y_2$  $\in R^1$, $\omega_1 \in R^1\setminus \{0\}$ and $M = \left(
                                                   \begin{array}{cc}
                                                     \frac{1}{4} & 0 \\
                                                     0 & 0 \\
                                                   \end{array}
                                                 \right)$.

Consider the following transformation: $\left(
                                          \begin{array}{c}
                                            y_1 \\
                                            y_2 \\
                                          \end{array}
                                        \right) = \left(
                                                    \begin{array}{cc}
                                                      -2 & 0 \\
                                                      1 & -\frac{1}{2} \\
                                                    \end{array}
                                                  \right)\left(
                                                           \begin{array}{c}
                                                             v \\
                                                             y \\
                                                           \end{array}
                                                         \right)
$, $\left(
      \begin{array}{c}
        x_1 \\
        x_2 \\
      \end{array}
    \right) = \left(
                \begin{array}{cc}
                  -\frac{1}{2} & -1 \\
                  0 & -2 \\
                \end{array}
              \right) \left(
                        \begin{array}{c}
                          x \\
                          u \\
                        \end{array}
                      \right).
$
Denote $\omega = -\omega_1$. Hamiltonian system (\ref{EQ44}) is transformed to
\begin{eqnarray*}
H(x,y,u,v) = \omega y + \frac{\varepsilon}{2} v^2 + \varepsilon^4 \cos (u + \frac{\iota \pi}{4}) + \varepsilon^2 \sin u \sin x e^y,
\end{eqnarray*}
which means the perturbation $P_1 = \varepsilon^4 \cos (u + \frac{\iota \pi}{4}) + \varepsilon^2 \sin u \sin x e^y$ is $3-$order nondegenerate, i.e. previous works do not apply to this system. Let us prove the persistence of resonant tori for Hamiltonian system (\ref{EQ44}) using $\textbf{Theorem \emph{\ref{shengluede}}}$.

Denote $F_1(x,y,u) = \frac{- \varepsilon^2  \sin u  e^y  \cos x}{\omega}.$
Then
\begin{eqnarray*}
\{N_1,F_1\} + P_1- [P_1] - P_1' = 0,
\end{eqnarray*}
where
\begin{eqnarray*}
P_1' &=& \partial_u N_1 \partial_v F_1 - \partial_v N_1 \partial_u F_1,\\
~[P_1]&=& \int_0^{2 \pi} {P_1}(x,y,u,\varepsilon)dx,\\
N_1 &=& \omega y + \frac{\varepsilon}{2}v^2.
\end{eqnarray*}
Therefore, under the symplectic transformation $\Phi_{F_1}^1$, we have
\begin{eqnarray*}
H_2(x,y,u,v) = N_2(y,u) + P_2'(x,u,v,\varepsilon)+ \bar{P}_3(x,y,u,v,\varepsilon),
\end{eqnarray*}
where
\begin{eqnarray*}
N_2(y,u) &=& \omega y + \frac{\varepsilon}{2}v^2 + \varepsilon^4 \cos (u + \frac{\iota\pi}{4}),\\
P_2'(x,u,v) &=& \frac{ \varepsilon^3 ~ v~ \cos u~ e^y~ \cos x}{\omega},\\
\bar{P}_3 &=& \int_0^1 \{R_t, F_1\}\circ \phi_{F_1}^t dt,\\
R_t &=& (1-t) \{N, F_1\}+ P_1.
\end{eqnarray*}
Moreover,
\begin{eqnarray*}
R_t &=& (1-t) \{N, F_1\} +P \\
&=& t \varepsilon^2 \sin u e^y \sin x + (1-t) \frac{\varepsilon^3 \nu \cos u e^y \cos x}{\omega} + \varepsilon^4 \cos (u+ \frac{\iota\pi}{4}),\\
\{R_t, F_1\} &=& \frac{\partial R_t}{\partial x} \frac{\partial F_1}{\partial y} - \frac{\partial R_t }{\partial y} \frac{\partial F_1}{\partial x} + \frac{\partial R_t}{\partial u} \frac{\partial F_1}{\partial v} - \frac{\partial R_t }{\partial v} \frac{\partial F_1}{\partial u}\\
&=& -(t \varepsilon^2 \sin u e^y \cos x- (1-t)\frac{\varepsilon^3 \nu \cos u e^y \sin x}{\omega} )\frac{\varepsilon^2 \sin u e^y \cos x}{\omega}\\
&~& - (t \varepsilon^2 \sin u e^y \sin x+ (1-t)\frac{\varepsilon^3 \nu \cos u e^y \cos x}{\omega} )\frac{\varepsilon^2 \sin u e^y \sin x}{\omega}\\
&~&+ (1- t) \frac{\varepsilon^3 \cos u e^y \cos x}{\omega} \frac{\varepsilon^2 \cos u e^y \cos x}{\omega}\\
&=&-\frac{t \varepsilon^4 \sin ^2 u e^{2y}}{\omega} + (1- t) \frac{\varepsilon^5 \cos^2 u e^{2y} \cos^2 x}{\omega^2}.
\end{eqnarray*}
Hence $| \bar{P}_3| = -\frac{ \varepsilon^4 \sin ^2 u e^{2y}}{2\omega} + O(\varepsilon^5).$
Set $F_2 = \frac{- \varepsilon^3 v \cos u e^y \sin x}{\omega^2}.$
Then
\begin{eqnarray*}
\{N_2, F_2\} + P_2 - [P_2] - P_2' = o(\varepsilon^3),
\end{eqnarray*}
where
\begin{eqnarray*}
[P_2] &=& \int_0^{2\pi} P_2(x,u,v) dx,\\
P_2' &=& \partial _u N_2 \partial_v F_2 - \partial_v N_2 \partial_u F_2\\
&=&-\frac{\varepsilon^7 \sin (u + \frac{\iota\pi}{4}) \cos u e^y \sin x }{\omega^2} + \varepsilon^4 \frac{v^2 \sin u  e^y \sin x }{\omega^2}.
\end{eqnarray*}
With the aid of $\Phi_{F_2}^1$, we have
\begin{eqnarray*}
H_3(x,y,u,v) = N_3(y,u) + P_3(x,y,u,v),
\end{eqnarray*}
where
\begin{eqnarray*}
N_3 &=& \omega y + \frac{\varepsilon}{2}v^2 + \varepsilon^4 \cos (u+\frac{\iota\pi}{4}) - \frac{\varepsilon^4 ~\sin^2u~e^{2y}}{2 \omega},\\
P_3&=&  \varepsilon^4 \frac{v^2 \sin u  e^y \sin x }{\omega^2}+ O(\varepsilon^5).
\end{eqnarray*}
Let $F_3 = \frac{ -\varepsilon^4~ v^2~ \sin u ~e^y~ \cos x}{\omega^3}.$ Then
\begin{eqnarray*}
\{N_3, F_3\} + P_3 - [P_3] - P_3' = 0,
\end{eqnarray*}
where
\begin{eqnarray*}
P_3' &=& \partial _u N_3 \partial_v F_3 - \partial_v N_3 \partial_u F_3\\
&=& \frac{\varepsilon^5 v^3 \cos u e^y \cos x}{\omega^3}\\
 &&+ \varepsilon^8 (\frac{2v \sin (u + \frac{\iota \pi}{4}) \sin u \cos x e^y \omega + 2v \sin^2u\cos ue^{3y}\cos x}{\omega^4}).
\end{eqnarray*}
With the help of $\Phi_{F_3}^1$, we have
\begin{eqnarray*}
H_4(x,y,u,v) = N_4(y,u) + P_4(x,y,u,v),
\end{eqnarray*}
where
\begin{eqnarray*}
N_4 =\omega y + \frac{\varepsilon}{2}v^2 + \varepsilon^4 \cos (u+\frac{\iota\pi}{4}) - \frac{\varepsilon^4 ~\sin^2u~e^{2y}}{2 \omega},~~~~P_4= O(\varepsilon^5).
\end{eqnarray*}
Therefore, using $\textbf{Theorem \emph{\ref{shengluede}}}$, there are  two families of invariant tori for the Hamiltonian $(\ref{EQ44})$.
\end{example}
\begin{remark}
The survival resonant tori are closely related to relative critical points. Relative critical points maybe drift when we do KAM iteration. In detail, both $\cos (u + \frac{\iota\pi}{4})$ and $g = - \cos (u + \frac{\iota\pi}{4}) - \frac{2\sin^2u e^{2y}}{2\omega}$ have two relative critical points. These critical points of $\cos (u + \frac{\iota\pi}{4})$ are $u = - \frac{\iota\pi+ 4\pi}{4} $ and $- \frac{\iota\pi+ 8\pi}{4}$, which are not relative critical point of $g$ when $\iota = 1$, since $\partial_u g(-\frac{\pi}{4}+ \pi) = \sqrt{2} \cos \pi =-1$ and $\partial_u g(-\frac{\pi}{4}+ 2\pi) = \sqrt{2} \cos 2\pi =1$.
\end{remark}

\section*{Acknowledgement}

We sincerely thank the anonymous referee for their most useful comments, which allowed us to vastly improve the exposition of our result.
\appendix
\section{Some Properties}\label{A}

\begin{pro}
 Coordinate transformation $\phi_g$: $I - I_0 = K_0 p$, $q = K_0 ^ T \theta$ is symplectic.
\end{pro}

\begin{proof} In fact,
\begin{eqnarray*}
\left(
  \begin{array}{c}
    \theta \\
   I - I_0 \\
  \end{array}
\right) = \left(
            \begin{array}{cc}
              (K_0^T)^{-1} & 0 \\
              0 & K_0 \\
            \end{array}
          \right)\left(
                    \begin{array}{c}
                      q \\
                      p \\
                    \end{array}
                  \right).
\end{eqnarray*}
Then
\begin{eqnarray*}
&~&\left(
  \begin{array}{cc}
    ((K_0^T)^{-1})^T & 0 \\
    0 & K_0^T \\
  \end{array}
\right)
\left(
  \begin{array}{cc}
    0 & I \\
    -I & 0 \\
  \end{array}
\right)
\left(
  \begin{array}{cc}
    (K_0^T)^{-1} & 0 \\
    0 & K_0 \\
  \end{array}
\right)\\
&=& \left(
  \begin{array}{cc}
    K_0^{-1} & 0 \\
    0 & K_0^T \\
  \end{array}
\right)
\left(
  \begin{array}{cc}
    0 & I \\
    -I & 0 \\
  \end{array}
\right)
\left(
  \begin{array}{cc}
    (K_0^T)^{-1} & 0 \\
    0 & K_0 \\
  \end{array}
\right)\\
&=& \left(
      \begin{array}{cc}
        0 & I \\
        -I & 0 \\
      \end{array}
    \right),
\end{eqnarray*}
which means that the coordinate transformation is symplectic.

\end{proof}

\begin{pro}
Transformation \emph{(\ref{Eq1p})} is symplectic.
\end{pro}

\begin{proof}
Let $p =\left(
   \begin{array}{c}
     y \\
     v \\
   \end{array}
   \right)$ and $q= \left(
             \begin{array}{c}
               x \\
               u \\
             \end{array}
           \right)$. With $p \rightarrow \varepsilon^{\frac{1}{4}} p$, $q\rightarrow q$, the motion equation of Hamiltonian system (\ref{qq}) is changed to $
\left\{
  \begin{array}{ll}
    \varepsilon^{\frac{1}{4}} \dot{p} = \frac{\partial H_1(\varepsilon^{\frac{1}{4}}p, q)}{\partial q},  \\
     \dot{q} = - \frac{\partial H_1(\varepsilon^{\frac{1}{4}}p, q)}{\varepsilon^{\frac{1}{4}}\partial p}.
  \end{array}
\right.$
Since $H(p, q) = \varepsilon^{-\frac{1}{4}} H_1(\varepsilon^{\frac{1}{4}}p, q),$  $
\left\{
  \begin{array}{ll}
   \dot{p} = \frac{\partial H(p, q)}{\partial q},  \\
     \dot{q} = - \frac{\partial H(p, q)}{\partial p},
  \end{array}
\right.$ i.e., the symplectic structure is preserved under transformation \emph{(\ref{Eq1p})}.

\end{proof}

\section{Proof of Lemma \ref{balala}} \label{B}

Directly,
\begin{eqnarray}
\nonumber \mu_\nu &=& 64 C_0 \big( 64 C_0\mu_{\nu-2}^{\frac{13}{12}} \big)^{\frac{13}{12}}\\
\nonumber&=& 64 C_0 \big( 64 C_0 (64 C_0 \mu_{\nu-3}^{\frac{13}{12}})^{\frac{13}{12}}  \big)^{\frac{13}{12}}\\
\nonumber&&\cdots\\
\nonumber&=&(64 C_0)^{1+ \frac{13}{12} + \cdots+ (\frac{13}{12})^\nu} \mu_0^{(\frac{13}{12})^\nu}\\
\label{EQ2}&=& (64 C_0)^{ 12((\frac{13}{12})^\nu -1)} \mu_0 ^{(\frac{13}{12})^\nu},\\
\label{EQ24}\alpha_\nu &=& (64 C_0)^{ 4((\frac{13}{12})^\nu -1)} \mu_0 ^{\frac{(\frac{13}{12})^\nu}{3}},\\
\nonumber s_\nu &=& \frac{1}{8} \mu_{\nu-1}^{\frac{1}{3}} \frac{1}{8} \mu_{\nu-2}^{\frac{1}{3}} s_{\nu-2}\\
\nonumber&=&\frac{1}{8} \mu_{\nu-1}^{\frac{1}{3}} \frac{1}{8} \mu_{\nu-2}^{\frac{1}{3}} \frac{1}{8} \mu_{\nu-3}^{\frac{1}{3}} s_{\nu-3}\\
\nonumber&=&(\frac{1}{8})^\nu (\mu_{\nu-1} \mu_{\nu-2} \cdots \mu_0)^{\frac{1}{3}} s_0\\
\nonumber&=&\frac{1}{8^\nu} \big( (64 C_0)^{ \frac{(\frac{13}{12})^{\nu-1} -1}{\frac{1}{12}}} \mu_0 ^{(\frac{13}{12})^{\nu-1}} (64 C_0)^{ \frac{(\frac{13}{12})^{\nu-2} -1}{\frac{1}{12}}} \mu_0 ^{(\frac{13}{12})^{\nu-2}} \cdots \mu_0 \big)^{\frac{1}{3}} s_0\\
\nonumber&=&\frac{1}{8^\nu} \big( (64 C_0)^{ \frac{(\frac{13}{12})^{\nu-1} -1}{\frac{1}{12}} +  \frac{(\frac{13}{12})^{\nu-2} -1}{\frac{1}{12}} + \frac{(\frac{13}{12}) -1}{\frac{1}{12}}}  \mu_0 ^{(\frac{13}{12})^{\nu-1}+ (\frac{13}{12})^{\nu-2} + \cdots +1}  \big)^{\frac{1}{3}} s_0\\
\label{EQ3}&=&\frac{1}{8^\nu} \big( (64 C_0)^{4 (13((\frac{13}{12})^{\nu-1} -1)) } \mu_0 ^{ 4((\frac{13}{12})^{\nu}-1) }\big) s_0.
\end{eqnarray}
Then
\begin{eqnarray}
\nonumber K_+ &=& \big(\big[- \frac{(\frac{13}{12})^\nu -1}{\frac{1}{12}} \log(64 C_0) - (\frac{13}{12})^\nu \log \mu_0\big] +1 \big)^{3 \eta}\\
\nonumber &=& \big(\big[- 12(\frac{13}{12})^\nu  \log(64 C_0) +12 \log(64 C_0) - (\frac{13}{12})^\nu \log \mu_0\big]+1 \big)^{3 \eta}\\
\label{EQ5} &=& \big(\big[(\frac{13}{12})^\nu( -12 \log(64 C_0) - \log \mu_0 ) + 12 \log(64 C_0) \big]+1 \big)^{3 \eta}\\
\nonumber &\geq& (\frac{13}{12})^{3\nu} (\log \frac{1}{\mu_0})^3\\
\nonumber &\geq& 8 (m + l_0) 2^{\nu+2}.
\end{eqnarray}
Combining
\begin{eqnarray}\label{EQ21}
r_\nu - r_{\nu+1} = r_0 (1- \sum\limits_{i=0}^{\nu-1} \frac{1}{2^{i+1}}) - r_0 (1- \sum\limits_{i=0}^{\nu} \frac{1}{2^{i+1}})=\frac{r_0}{2^{\nu+1}},
\end{eqnarray}
we finish the verification of $\bf{(H1)}$ for all $\nu =1,2,\cdots$.

According to $(\ref{EQ5})$, we have
\begin{eqnarray*}
K_{\nu+1}^{2\chi_1} \leq 2^{2\chi_1} (\frac{13}{12})^{3\nu (2\chi_1)} (\log \frac{1}{\mu_0} )^{3\big(2\chi_1\big)}.
\end{eqnarray*}
Then, for small enough $\mu_0$,
\begin{eqnarray*}
s_\nu K_{\nu+1}^{2\chi_1} &\leq& \frac{1}{8^\nu}\frac{1}{(64 C_0)^{\nu-1}} (64 C_0)^{52 ((\frac{13}{12})^{\nu-1} -1)} \mu_0 ^{4 ((\frac{13}{12})^\nu -1)}s_0 \big( 2 (\frac{13}{12})^{3\nu} (\log \frac{1}{\mu_0} )^{3}\big)^{(2\chi_1)}\\
&\leq& \frac{1}{8^\nu} \mu_0 ^{2((\frac{13}{12})^ \nu -1)} s_0 \big( 2 (\frac{13}{12})^{3\nu} (\log \frac{1}{\mu_0} )^{3}\big)^{(2\chi_1)}\\
&\leq& 2^{(2\chi_1)} ( \frac{(\frac{13}{12})^{3(2\chi_1)}}{8})^\nu \mu_0^{(\frac{13}{12})^\nu -1} \mu_0^{(\frac{13}{12})^\nu -1} s_0 (\log \frac{1}{\mu_0})^{3(2\chi_1)}\\
&\leq& 2^{(2\chi_1)} ( \frac{(\frac{13}{12})^{3(2\chi_1)}}{8})^\nu (\mu_0^{\frac{1}{12}})^\nu \mu_0^{(\frac{13}{12})^\nu -1} s_0 (\log \frac{1}{\mu_0})^{3(2\chi_1)}\\
&\leq& \frac{\gamma_0}{2^\nu},\\
\mu_\nu^{\frac{1}{6l_0^2}} K_{\nu+1}^{2\chi_1} &\leq& (64 C_0)^{\frac{12}{6l_0^2}((\frac{13}{12})^\nu -1)} \mu_0 ^{\frac{1}{6l_0^2}(\frac{13}{12})^\nu} 2^{2\chi_1} (\frac{13}{12})^{3\nu (2\chi_1)} (\log \frac{1}{\mu_0} )^{3\big(2\chi_1\big)}\\
&\leq& \frac{\gamma_0}{2^\nu}.
\end{eqnarray*}
Here, we use the fact, for constant $a>0$, $b>0$, $\mu_0^a (\log \frac{1}{\mu_0})^b \rightarrow 0$, as $\mu_0 \rightarrow 0$, which could be verified using finite times L'Hopital's rule. Combining
\begin{eqnarray}
\label{EQ25}\gamma_\nu = \gamma_0 (1- \sum\limits_{i=0}^{\nu-1} \frac{1}{2^{i+1}}) = \frac{\gamma_0}{2^{\nu}},
\end{eqnarray} we verity $\bf{(H4)}$ for $\nu = 1,2, \cdots$.

Since
\begin{eqnarray}
 \nonumber \Gamma _\nu &=& \Gamma_\nu (r_\nu - r_{\nu-1})\\
\nonumber &\leq&\int _1^{\infty} {t}^{\chi}  e^{-\frac{t(r_{\nu} - r_{\nu+1})}{8}} dt\\
 \nonumber  &\leq&   (  2^{\nu+6} e^{-\frac{1}{ 2^{\nu+6}}} + 2^{2(\nu+6)} \chi e^{-\frac{1}{ 2^{\nu+6}}} + \cdots +   2^{(\nu+6)\chi} \chi! e^{-\frac{1}{  2^{\nu+6}}} )\\
\nonumber &\leq& c 2^{(\nu+6)\chi} e^{-\frac{1}{ (2^{\nu+6})}},
\end{eqnarray}
it is clear that
\begin{eqnarray}\label{EQ20}
 \mu_{\nu}\Gamma_{\nu}^3 < (64 C_0)^{\frac{1}{1- \lambda_0} (\frac{13}{12})^\nu -1} \mu_0^{(\frac{13}{12})^\nu} (  2^{(\nu+6)\chi} e^{-\frac{1}{ 2^{\nu+6}}})^3.
\end{eqnarray}
Combining $(\ref{EQ21})$ and $(\ref{EQ20})$, assumption $\bf{(H5)}$ holds for $\nu = 1,2,\cdots$. Using $(\ref{EQ24})$ and $(\ref{EQ20})$, we finish the proof of $\bf{(H6)}$ for $\nu = 1,2,\cdots$. With $(\ref{EQ25})$ and $(\ref{EQ20})$, we verify $\bf{(H7)}$ for $\nu = 1,2,\cdots$. Combining (\ref{EQ30}), (\ref{EQ31}), $(\ref{EQ32})$ and $(\ref{EQ2})$, yield
\begin{eqnarray*}
|\partial_{(y,z)}^j(\hat{h}_\nu - \hat{h}_0)| \leq \sum\limits_{\nu} \mu_\nu \leq \mu_0^{\frac{13}{24}},
\end{eqnarray*}
which implies $\bf{(H3)}$ hold for $\nu = 1,2,\cdots$.

\section{Measure Estimate} \label{C}

\begin{theorem}
Let $\Lambda_* = \bigcap\limits_{\nu=0}^{\infty} \Lambda_\nu$. Assume $(A2)$ hold. Then, for sufficiently small $\delta$,
\begin{eqnarray*}
|\Lambda_0\setminus \Lambda_*| \rightarrow 0 ~as ~\gamma_0 \rightarrow 0.
\end{eqnarray*}
\end{theorem}

\begin{proof}
Let
\begin{eqnarray*}
R_{\nu+1} &=& \{\lambda\in \Lambda_{\nu} (\lambda): |\breve{L}_{k0, \nu}| \leq \frac{\gamma_\nu}{|k|^\tau}, \breve{L}_{k1, \nu}^* \breve{L}_{k1, \nu}  \leq \frac{\gamma_\nu}{|k|^\tau}I_{m+2m_0},\\
 &~&\breve{L}_{k2, \nu}^* \breve{L}_{k2, \nu}\leq \frac{\gamma_\nu}{|k|^\tau}I_{m^2+2m m_0+ 4m_0^2}, for~all~K_{\nu}< |k|\leq K_{\nu+1} \}\\
 &\subset& S_1 \bigcup S_2\bigcup S_3,
\end{eqnarray*}
where $S_1 = \{\lambda\in \Lambda_{\nu}: 0\leq |\breve{L}_{k0,\nu}|\leq \frac{\gamma_\nu}{|k|^\tau}, K_{\nu}< |k|\leq K_{\nu+1}\}$, $S_2 = \{\lambda\in\Lambda_{\nu}: 0\leq \breve{L}_{k1,\nu}^* \breve{L}_{k1, \nu}\leq \frac{\gamma_\nu}{|k|^\tau}, K_{\nu}< |k|\leq K_{\nu+1}\}$, $S_3 = \{\lambda\in\Lambda_{\nu}: 0\leq \breve{L}_{k2,\nu}^* \breve{L}_{k2, \nu}\leq \frac{\gamma_\nu}{|k|^\tau}, K_{\nu}< |k|\leq K_{\nu+1}\}$.

Let $\varsigma = \frac{k}{|k|}\in S^m$, where $S^m$ is a $m$-dimensional ball. For given $\lambda_0\in \Lambda_\nu$, denote $\Omega_\nu (\lambda_0) = \big( \omega_\nu(\lambda_0), \cdots, \partial_\lambda^\alpha \omega_\nu(\lambda_0), \int_0^1 (1-t)^{|\alpha+1|} \partial_\lambda^ {\alpha+1}\omega_\nu(\lambda_0+ t \lambda) \big)$, $\hat{\lambda} = \lambda - \lambda_0 = (\hat{\lambda}_1, \cdots, \hat{\lambda}_m),$ $\tilde{\lambda}=(1, \hat{\lambda}, \cdots, \hat{\lambda}^\alpha, \hat{\lambda}^{\alpha+1})$. Using Taylor series,
\begin{eqnarray*}
\breve{L}_{k0, \nu}= |k|\varsigma^T \Omega_\nu (\lambda_0) \tilde{\lambda}.
\end{eqnarray*}
Let $Q_{\lambda_0, \nu}= (q_{ij})_{\breve{\iota}\times \breve{\iota}}$, where $q_{1\tau_{1}} = q_{2\tau_{2}} = \cdots= q_{m\tau_{m}} = 1$, $\tau_p \neq \tau_q$, $1\leq p,q \leq \breve{\iota}$ and other elements of $Q_{\lambda_0, \nu}$ are equal to 0.
Since $rank \Omega_\nu(\lambda_0) = m$ for $\lambda_0\in \Lambda_\nu\subset \Lambda$, i.e. condition $(A2)$, there is an matrix $Q_{\lambda_0,\nu}= (q_{ij})_{\breve{\iota}\times \breve{\iota}}$ such that $\Omega_\nu(\lambda_0)Q_{\lambda_0,\nu} = \big(A_\nu (\lambda_0), B_\nu(\lambda_0)\big)$, where $A_\nu(\lambda_0) = (a_{ij})_{m\times m}$ is nonsingular. Denote $\Lambda_{\lambda_0,\nu}$ the neighborhood of $\lambda_0$ and $\bar{\Lambda}_{\lambda_0,\nu}$ the closure of $\Lambda_{\lambda_0,\nu}$. Then $\det A_\nu(\lambda) \neq 0$ for $\lambda \in \bar{\Lambda}_{\lambda_0, \nu}$. Therefore, there is an orthogonal matrix $Q_{\lambda_0,\nu}$ such that $\Omega_\nu(\lambda) Q_{\lambda_0, \nu} = (A_{\nu}(\lambda), B_{\nu}(\lambda))$ for $\lambda\in \bar{\Lambda}_{\lambda_0, \nu}$, where $\det A_{\nu}(\lambda) \neq 0$ on $\bar{\Lambda}_{\lambda_0, \nu}.$
Denote the eigenvalues of $(A_{\nu}(\lambda)A_{\nu}^* (\lambda)+ B_{\nu}(\lambda)B_{\nu}^* (\lambda))$ by $\check{\lambda}_{1, \nu}\leq \cdots \leq\check{\lambda}_{n,\nu}$. Since $rank (A_{\nu}(\lambda)A_{\nu}^* (\lambda)+ B_{\nu}(\lambda)B_{\nu}^* (\lambda)) = rank (A_{\nu}(\lambda), B_{\nu}(\lambda))$ (\cite{Horn}), there is a unitary $U_{\nu}$ and a real diagonal $V_{\nu} = diag(\check{\lambda}_{1,\nu}, \cdots, \check{\lambda}_{n,\nu})$ such that $(A_{\nu}(\lambda)A_{\nu}^* (\lambda)+ B_{\nu}(\lambda)B_{\nu}^* (\lambda)) = U_{\nu} V_{\nu} U_{\nu}^*$. Therefore, using Poincar\'{e} separation theorem,
\begin{eqnarray*}
\varsigma^* (A_{\nu}(\lambda)A_{\nu}^* (\lambda)+ B_{\nu}(\lambda)B_{\nu}^* (\lambda)) \varsigma &=& \varsigma^*U_{\nu}^* U_{\nu} (A_{\nu}(\lambda)A_{\nu}^* (\lambda)+ B_{\nu}(\lambda)B_{\nu}^* (\lambda))U_{\nu}^* U_{\nu} \varsigma\\
 &=& \varsigma^*U_{\nu}^*  diag(\check{\lambda}_{1,\nu}, \cdots, \check{\lambda}_{n,\nu}) U_{\nu} \varsigma\\
 &\geq& \varsigma^*U_{\nu}^*  \check{\lambda}_{1,\nu} I_m U_{\nu}\varsigma\\
 &\geq& \check{\lambda}_{1, \nu}.
\end{eqnarray*}
Since the nonzero eigenvalues of $\left(
                                    \begin{array}{cc}
                                      A_{\nu}^T (\lambda) \varsigma\varsigma^T A_{\nu}(\lambda) & A_{\nu}^T (\lambda) \varsigma\varsigma^T B_{\nu}(\lambda) \\
                                      B_{\nu}^T (\lambda) \varsigma\varsigma^TA_{\nu}(\lambda) & B_{\nu}^T(\lambda) \varsigma\varsigma^T B_{\nu}(\lambda) \\
                                    \end{array}
                                  \right)
$ and\\ $\varsigma^T (A_{\nu}(\lambda)A_{\nu}^T (\lambda)+ B_{\nu}(\lambda)B_{\nu}^T (\lambda))\varsigma$ are the same, there is an unitary matrix $U_{\nu}(\lambda)$ such that
\begin{eqnarray*}
\left(
                                    \begin{array}{cc}
                                      A_{\nu}^T (\lambda) \varsigma\varsigma^T A_{\nu}(\lambda) & A_{\nu}^T (\lambda) \varsigma\varsigma^T B_{\nu}(\lambda) \\
                                      B_{\nu}^T (\lambda) \varsigma\varsigma^TA_{\nu}(\lambda) & B_{\nu}^T(\lambda) \varsigma\varsigma^T B_{\nu}(\lambda) \\
                                    \end{array}
                                  \right) = U_{\nu}(\lambda) diag (0, \cdots,0, \check{\lambda}_\nu) U_{\nu}^*(\lambda),
\end{eqnarray*}
where $\check{\lambda}_\nu = \varsigma^*U_{\nu}^*  diag(\check{\lambda}_{1,\nu}, \cdots, \check{\lambda}_{n,\nu}) U_{\nu} \varsigma.$ Denote $(U_{\nu}(\lambda) Q_{\lambda_0,\nu} )_i$ the $i-$th row of $U_{\nu}(\lambda) Q_{\lambda_0,\nu}$. Therefore, $|(U_{\nu}(\lambda) Q_{\lambda_0,\nu}^{-1} \tilde{\lambda})_i| = |(U_{\nu}(\lambda) Q_{\lambda_0, \nu}^{-1} )_i \tilde{\lambda} | \geq (\min\limits_{1\leq j\leq m} |\hat{\lambda}_j|)^{2N+2}$. Hence
\begin{eqnarray*}
|\breve{L}_{k0, \nu}^* \breve{L}_{k0, \nu}| &=& |k|^2 |\tilde{\lambda}^T Q_{\lambda_0, \nu}Q_{\lambda_0, \nu}^{-1} \Omega^T \varsigma\varsigma^T \Omega Q_{\lambda_0, \nu}  Q_{\lambda_0, \nu}^{-1} \tilde{\lambda}|\\
&=& |k|^2 |\tilde{\lambda}^T Q_{\lambda_0, \nu} \left(
                                          \begin{array}{cc}
                                            A_{\nu}^T (\lambda) \varsigma \varsigma^T A_{\nu}(\lambda) & A_{\nu}^T (\lambda) \varsigma\varsigma^T B_{\nu}(\lambda) \\
                                            B_{\nu}^T (\lambda) \varsigma\varsigma^T A_{\nu}(\lambda) & B_{\nu}^T(\lambda) \varsigma\varsigma^T B_{\nu}(\lambda) \\
                                          \end{array}
                                        \right)Q_{\lambda_0,\nu}^{-1} \tilde{\lambda}|\\
&=&|k|^2 |\tilde{\lambda}^T Q_{\lambda_0, \nu}U_{\nu}^*(\lambda) diag (0,\cdots, 0, \check{\lambda}_{\nu}) U_{\nu}(\lambda)Q_{\lambda_0, \nu}^{-1} \bar{\lambda}|\\
&\geq& |k|^2 \check{\lambda}_1 |(U_{\nu}(\lambda) Q_{\lambda_0,\nu}^{-1} \tilde{\lambda})_{i}|\\
&\geq& |k|^2 \check{\lambda}_1 (\min\limits_{1\leq j \leq m} |\hat{\lambda}_i|)^{2N+2}.
\end{eqnarray*}
 Then
\begin{eqnarray*}
~&~& |\{ \lambda\in \Lambda_\nu \bigcap \bar{\Lambda}_{\lambda_0, \nu}: |\breve{L}_{k0, \nu}^* \breve{L}_{k0, \nu}| \leq \frac{\gamma_\nu^2}{|k|^{2\tau}},K_{\nu}< |k|\leq K_{\nu+1} \}| \\
&<& |\{ \lambda\in \Lambda_\nu \bigcap \bar{\Lambda}_{\lambda_0, \nu}: \check{\lambda}_1(\lambda) \big(\min\limits_{j}|\hat{\lambda}_j|\big)^{2N + 2} \leq \frac{\gamma_\nu^2}{|k|^{2(\tau-1)}}, K_{\nu}< |k|\leq K_{\nu+1} \}|\\
 &\leq& c \frac{1}{\check{\lambda}_1} D^{m-1} \frac{\gamma^{\frac{1}{N+1}}}{|k|^{\frac{\tau-1}{N+1}}},
\end{eqnarray*}
where $D$ is the exterior diameter of $\bar{\Lambda}_{\lambda_0,\nu}$ with respect to the maximum norm, $m$ is the dimension of $\Lambda_\nu$. Further, there are finite sets, $\bar{\Lambda}_{\lambda_i, \nu}$, $1\leq i \leq \tilde{\iota}$, such that $\Lambda_\nu\subset \bigcup\limits_{i=1}^{\tilde{\iota}} \bar{\Lambda}_{\lambda_i, \nu}$ and
\begin{eqnarray*}
|\breve{L}_{k0, \nu}^* (\lambda)\breve{L}_{k0, \nu}(\lambda)| > |k|^2 \check{\lambda}_1^{\lambda_i}(\lambda) \big(\min\limits_{j}|\hat{\lambda}_j|\big)^{2N + 2} ~for ~\lambda\in \bar{\Lambda}_{\lambda_i, \nu},
\end{eqnarray*}
where  $\check{\lambda}_1^{\lambda_i}(\lambda)$ the minimum eigenvalue of $\Omega_\nu^*(\lambda) \Omega_\nu(\lambda)$ on $\bar{\Lambda}_{\lambda_i, \nu}$. Therefore,
\begin{eqnarray*}
|S_1| = |\{\lambda\in \Lambda_\nu: |\breve{L}_{k0,\nu}| \leq \frac{\gamma_\nu}{|k|^\tau}, K_{\nu}< |k|\leq K_{\nu+1}\}|< c D^{m-1} \frac{\gamma_0^{\frac{1}{N+1}}}{|k|^{\frac{\tau-1}{N+1}}},
\end{eqnarray*}
where $c$ depends on $\Lambda$, $D$, $m$ and $\check{\lambda}_1^{\lambda_i}$, $1\leq i\leq \tilde{\iota}$.

Denote
\begin{eqnarray*}
B_\nu= \left(
     \begin{array}{ccc}
       0 & (JM_{21, \nu})^T \otimes I_m & 0 \\
       0 & -(M_{22, \nu}J) \otimes I_m & -I_{2m_0}\otimes (2M_{12, \nu}J) \\
       0 & 0 & -(M_{22, \nu}J )\otimes I_{2m_0} - I_{2m_0} \otimes(M_{22, \nu}J) \\
     \end{array}
   \right).
\end{eqnarray*}
Let $\tilde{B}_\nu= - \sqrt{-1}\langle k, \omega_\nu\rangle I_{m^2 + 2m m_0 + 4m_0^2} B_\nu + B_\nu^* \sqrt{-1}\langle k, \omega_\nu\rangle I_{m^2 + 2m m_0 + 4m_0^2} + \delta B_\nu^* B_\nu.$
Combining Poincar\'{e} separation theorem and eigenvalue perturbation theorem (\cite{Horn}), for sufficiently small $\delta$, we have
\begin{eqnarray*}
\breve{L}_{k2, \nu}^*\breve{L}_{k2, \nu} &=& |k|^2 \big( \langle \varsigma, \omega_\nu\rangle\big)^2 I_{m^2 + 2m m_0 + 4m_0^2} + \delta \tilde{B}_\nu\\
&\geq& \frac{|k|^2 }{2} \check{\lambda}_1^{\lambda_i} \big(\min\limits_{j} |\hat{\lambda}_j|\big)^{2N + 2} I_{m^2 + 2m_0 m + 4m_0^2} + \delta \tilde{B}_\nu\\
&\geq& \frac{|k|^2 }{4} \check{\lambda}_1^{\lambda_i} \big(\min\limits_{j} |\hat{\lambda}_j|\big)^{2N + 2} I_{m^2 + 2m_0 m + 4m_0^2}.
\end{eqnarray*}
Therefore,
\begin{eqnarray*}
|S_3| &=& |\{\lambda\in \Lambda_\nu: \breve{L}_{k2,\nu}^* \breve{L}_{k2,\nu} \leq \frac{\gamma_\nu}{|k|^{\tau}} I_{m^2+ + 2m m_0 + 4m_0^2}, K_{\nu}< |k|
\leq K_{\nu+1}\}|\\
&<& c D^{m-1} \frac{\gamma_0^{\frac{1}{2(N+1)}}}{|k|^{\frac{\tau-2}{2(N+1)}}}.
\end{eqnarray*}
Similarly,
\begin{eqnarray*}
|S_2| &=& |\{\lambda\in \Lambda_\nu: \breve{L}_{k1,\nu}^* \breve{L}_{k1,\nu} \leq \frac{\gamma_\nu}{|k|^{\tau}} I_{m + 2m_0}, K_{\nu}< |k|\leq K_{\nu+1}\}|\\
&<& c D^{m-1} \frac{\gamma_0^{\frac{1}{2(N+1)}}}{|k|^{\frac{\tau-2}{2(N+1)}}}.
\end{eqnarray*}

Obviously, $|R_{\nu+1}| \leq  \frac{\gamma_0^{\frac{1}{2(N+1)}}}{|k|^{\frac{\tau-2}{2(N+1)}}}.$ Thus
\begin{eqnarray*}
|\bigcup\limits_{\nu = 0}^{\infty} \bigcup\limits_{K_{\nu}\leq |k| \leq K_{\nu+1}} R_{\nu+1}| \leq c\sum\limits_{\nu=0}^{\infty} \sum\limits_{K_\nu\leq|k|\leq K_{\nu+1}} \frac{\gamma_0^{\frac{1}{2(N+1)}}}{|k|^{\frac{\tau-2}{2(N+1)}}} \rightarrow 0 ~as~\gamma_0\rightarrow 0.
\end{eqnarray*}

\end{proof}

\end{document}